\newtheorem{theorem}{Theorem}[section]
\newtheorem{lemma}[theorem]{Lemma}
\newtheorem{corollary}[theorem]{Corollary}
\newtheorem{proposition}[theorem]{Proposition}
\theoremstyle{definition}
\newtheorem{example}[theorem]{Example}
\newtheorem{remark}[theorem]{Remark}
\newtheorem{definition}[theorem]{Definition}
\newtheorem{convention}[theorem]{Convention}
\newtheorem{notation}[theorem]{Notation}
\newenvironment{romanlist}%
        {\begin{list}
                {\noindent\makebox[0mm][r]{(\roman{enumi})}}
                {\leftmargin=5.5ex \usecounter{enumi}}
        }
        {\end{list}}
\DeclareMathAlphabet{\mathpzc}{OT1}{pzc}{m}{it}
\def\<{\langle}
\def\>{\rangle}
\def\CC{{\mathbb C}}
\def\calD{{\mathcal D}}
\def\NN{{\mathbb N}}
\def\OO{{\mathcal O}}
\def\calO{{\mathcal O}}
\def\QQ{{\mathbb Q}}
\def\RR{{\mathbb R}}
\def\ZZ{{\mathbb Z}}
\def\boldone{\boldsymbol{1}}
\def\boldzero{\boldsymbol{0}}
\def\sC{{\mathscr C}}
\def\sL{{\mathscr L}}
\def\sN{{\mathscr N}}
\def\sQ{{\mathscr Q}}
\def\sT{{\mathscr T}}
\def\del{\partial}
\def\dt#1{{\partial_{t_{#1}}}}
\def\dx#1{{\partial_{x_{#1}}}}
\def\dz#1{{\partial_{z_{#1}}}}
\def\ini{\operatorname{in}}
\def\Gl{\operatorname{\mathrm{GL}}}
\def\Horn{{\rm Horn}}
\def\sHorn{{\rm sHorn}}
\def\qdeg{{\rm qdeg}} 
\def\Span{{\rm Span}}
\def\hom{\operatorname{Hom}}
\def\vol{{\rm vol}}
\def\conv{{\rm conv}}
\def\rank{{\rm rank }}
\def\ann{\operatorname{ann}}
\def\supp{{\rm supp}}
\def\Var{{\rm Var}}
\def\Sol{\operatorname{Sol}}
\def\gr{{\rm gr}}
\def\image{{\rm image\ }}
\def\dkap{{\delta_{B,\kappa}}}
\DeclareMathOperator{\SingLocus}{{\operatorname{Sing}}}
\def\Sing#1{{\SingLocus({#1})}}
\def\charVar{{\operatorname{Char}}}
\DeclareMathOperator{\Dmodules}{{\operatorname{\mathfrak{D-mods}}}}
\def\Dmods#1{{\Dmodules({#1})}}
\DeclareMathOperator{\Aholonomic}{{\operatorname{{\it{A}}-\mathfrak{hol}}}}
\def\Ahol#1{{\Aholonomic({#1})}}
\DeclareMathOperator{\Amodules}{\operatorname{{\it{A}}-\mathfrak{mods}}}
\def\Amods#1{{\Amodules({#1})}}
\DeclareMathOperator{\AEmodules}{\operatorname{\sT-\mathfrak{hol}}}
\def\AEmods#1{{\AEmodules({#1})}}
\DeclareMathOperator{\AEmaxmodules}{\operatorname{\sT-\mathfrak{irred}}}
\def\AEmaxmods#1{{\AEmaxmodules({#1})}}
\DeclareMathOperator{\Tirr}{\operatorname{\sT-\mathfrak{irred}}}
\def\barX{{\bar{X}}}
\def\barZ{{\bar{Z}}}
\def\onto{\twoheadrightarrow}
\def\into{\hookrightarrow}
\def\minus{\smallsetminus}
\def\dtorus{(\mathbb{C}^*)^d}
\def\endrk{\hfill$\hexagon$}
\newcommand*{\defeq}{\mathrel{\vcenter{\baselineskip0.5ex \lineskiplimit0pt
                     \hbox{\scriptsize.}\hbox{\scriptsize.}}}%
                     =}
\numberwithin{equation}{section}
\begin{document}
\vspace*{-6mm}
\title[Torus equivariant $D$-modules and hypergeometric systems]{Torus equivariant $D$-modules and hypergeometric systems}  

\author[Berkesch]{Christine Berkesch}
\address{School of Mathematics, University of Minnesota, Minneapolis, MN 55455.}
\email{cberkesc@umn.edu}
\thanks{CB was partially supported by NSF Grants DMS 1440537, OISE 0964985, DMS 0901123, and DMS 1661962.}

\author[Matusevich]{Laura Felicia Matusevich}
\address{Department of Mathematics \\
Texas A\&M University \\ College Station, TX 77843.}
\email{laura@math.tamu.edu}
\thanks{LFM was partially supported by NSF Grants DMS 0703866, DMS 1001763, and a Sloan Research Fellowship.}

\author[Walther]{Uli Walther}
\address{Department of Mathematics\\ Purdue University\\ West Lafayette, IN \ 47907.}
\email{walther@math.purdue.edu}
\thanks{UW was partially supported by NSF Grant DMS 0901123 and 
DMS 1401392.}

\subjclass[2010]{
Primary: 
14L30, 
33C70; 
Secondary: 
13N10, 
14M25, 
32C38. 
}

\begin{abstract}
We formalize, at the level of $D$-modules, the notion that
$A$-hypergeometric systems are equivariant versions of the classical
hypergeometric equations. For this purpose, we construct a functor
$\Pi^{\tilde A}_B$ on a suitable category of torus equivariant
$D$-modules and show that it preserves key properties, such as
holonomicity, regularity, and reducibility of monodromy
representation. We also examine its effect on solutions,
characteristic varieties, and singular loci.  
By applying $\Pi^{\tilde A}_B$ to suitable binomial $D$-modules, we shed new light on the $D$-module theoretic properties of
systems of classical hypergeometric differential equations.
\end{abstract}
\maketitle
\vspace{-4mm}
\dedicatory{\begin{center}\emph{
In memory of Mikael Passare.
}\end{center}}

\setcounter{tocdepth}{1}

In the late 1980s, Gelfand, Graev, Kapranov and Zelevinsky introduced
a new kind of hypergeometric functions and differential equations. The study of
these objects produced major advances in combinatorial algebraic
geometry, culminating in the landmark book~\cite{gkzbook}. 
A key feature of these new hypergeometric objects is the presence of a
torus action. To illustrate this, consider the Gauss
hypergeometric series 
\[
f(z) = \sum_{n=0}^\infty
\frac{a(a+1)\cdots(a+n-1)b(b+1)\cdots(b+n-1)}{c(c+1)\cdots(c+n-1)}
\frac{z^n}{n!}, 
\]
where $a,b,c \in \CC$ and $c\notin\ZZ_{\geq 0}$, which has been studied for over two centuries. 
The Gelfand--Kapranov--Zelevinsky (or
GKZ) version of this function is
$x_1^{c-1}x_2^{-a}x_3^{-b}f(x_1x_4/x_2x_3)$,
which is homogeneous with
respect to a certain action of $(\CC^*)^3$ on $\CC^4$ derived from
(the differential equation for) $f(z)$.

Intuitively, GKZ-hypergeometric functions should be viewed as torus
equivariant versions of the classical hypergeometric functions of
Euler, Gauss, Horn, Lauricella and many others; and, at the level of
functions, this intuition is correct. 
The goal of this article is to formalize such a relationship
both globally and at
the level of differential equations, in such a way that $D$-module
theoretic results about GKZ-systems can also be applied to classical hypergeometric differential equations.
In fact, while GKZ-functions involve more variables than their classical
counterparts, there are more systematic methods for studying them from the point of view of $D$-modules than
their classical versions, thanks to the underlying torus action.

The following are the systems of differential equations that central
to this article. 
For a $d\times n$ integer matrix $A = (a_{ij})\in\ZZ^{d\times n}$, 
let $I$ be an $A$-graded binomial $\CC[\del_x]$-ideal, 
generated by elements of the form 
$\del_x^u - \lambda \del_x^v$ with $Au=Av$ (and $\lambda = 0$ is allowed)
With $\barX \defeq\CC^n$, a \emph{binomial $D_\barX$-module} depends on a choice of some $\beta
\in \CC^d$ and has the form  
\[
D_\barX/( I+ \<E-\beta\>) \defeq D_\barX/( D_\barX\cdot I+ \<E-\beta\>). 
\]
Here $E_i \defeq \sum_{j=1}^n a_{ij} x_i\dx{i} 
$ for $1 \leq i \leq d$, and 
$E-\beta$ is the sequence
$E_1-\beta_1,\dots, E_d-\beta_d$ of 
\emph{Euler operators} of $A$, 
see Remark~\ref{rem:eulers}. 

For special kinds of binomial ideals, we obtain special kinds of
binomial $D_\barX$-modules. The \emph{toric ideal}
\[
I_A \defeq \< \del_x^u - \del_x^v \mid Au=Av\> \subseteq \CC[\del_x]
\]
gives rise to a binomial $D_\barX$-module called an \emph{$A$-hypergeometric
  $D_\barX$-module}. The left $D_\barX$-ideal 
  \[
  H_A(\beta) \defeq D_\barX\cdot I_A + \< E-\beta\>
  \] 
  is called an \emph{$A$-hypergeometric system}, or \emph{GKZ-system}.

If $B$ is a Gale dual of $A$ (see 
Convention~\ref{conv:BtildeACK}), then the \emph{lattice basis ideal} associated to $B$ is 
\[
I(B) \defeq \< \del_x^{w_+} -\del_x^{w_-} \mid w = w_+ -w_- \text{ is a
  column of } B \> \subseteq \CC[\del_x]
\]
The \emph{lattice basis binomial $D_\barX$-module} is
$D_\barX/ (I(B) + \<E-\beta\>)$.

The matrix $A$ induces a monomial action of $T\defeq\dtorus$ on
$\barX$ (see Definition~\ref{def:torusAction}), and thus also on the
(algebraic) differential operators on $\barX$.  On the level of
functions, the Euler operators allow the elimination of $d$ variables;
this connects GKZ- and classical hypergeometric functions.  In order to
get a quotient construction on equivariant $D$-modules, one might try
GIT or descent techniques on the underlying varieties.  The drawbacks
to the GIT approach are: the GIT quotient of $\barX$ by $T$ is
singular; there is no room for the choice of a Gale dual, natural in
the case of lattice basis ideals. On the other hand, descent is not
suitable for handling parameters, which are a major ingredient in the
study of hypergeometric systems.  We present here a richer
construction, see Section~\ref{subsec:descent}.

In Part I, we restrict $T$-equivariant 
$D_\barX$-modules to
$X\defeq\barX\minus\Var(x_1\cdots x_n)$,  
the open torus of $\barX$, and consider the 
family of toric maps from the quotient of $X$
by $T$ to the open torus $Z$ in an affine space 
$\barZ = \CC^m$, where $m\defeq n-d$. This family 
is parametrized by invertible matrices $\tilde A$ that
extend $A$ and the Gale duals $B$ of $A$.  
We construct a functor $\Pi^{\tilde A}_B$ that 
sends $T$-equivariant $D_\barX$-modules 
to $D_\barZ$-modules. 
With moderate restrictions on its source 
category, $\Pi^{\tilde A}_B$ preserves 
holonomicity, regularity, and reducibility of monodromy representation, 
as shown in Theorem~\ref{thm:transfer theorem}. 
Under an additional irreducibility assumption 
in \S\ref{sec:E-beta}, we obtain an explicit 
presentation for $\Pi^{\tilde A}_B$ and describe its impact on solutions, characteristic varieties, and singular loci. 

In Part~II, we apply $\Pi^{\tilde A}_B$ to binomial $D_\barX$-modules in order to investigate systems arising from an $n\times m$ integer matrix $B$ of full 
rank $m$ with rows 
$B_1,\dots,B_n$, along with a vector $\kappa \in \CC^n$. 
To define these systems, let $\barZ = \CC^m$ and $\eta \defeq [z_1\dz{1},\dots,z_m\dz{m}]$, and
construct the following elements of $D_\barZ$:  
\[
q_k \defeq 
\prod_{b_{ik}>0} \prod_{\ell=0}^{b_{ik}-1} 
( B_i \cdot \eta+ \kappa_i
- \ell ) \quad \text{ and } \quad
p_k \defeq 
\prod_{b_{ik}<0} \prod_{\ell=0}^{|b_{ik}|-1} 
( B_i \cdot \eta+ \kappa_i
- \ell ) .
\]
\begin{enumerate}
\item \label{def:reg-Horn}
The \emph{Horn hypergeometric system} 
associated to $B$ and 
$\kappa$ is the left $D_\barZ$-ideal
\begin{equation}
\label{eqn:horndef}
\Horn(B,\kappa) \defeq 
D_\barZ\cdot\< q_k - z_k p_k \mid k=1,\dots m\> 
\subseteq D_\barZ.
\end{equation}
\item \label{def:sHorn}
The \emph{saturated Horn hypergeometric system} 
associated to $B$ and 
$\kappa$ is the left $D_\barZ$-ideal
\begin{equation}
\label{eqn:shorndef}
\sHorn(B,\kappa) \defeq 
D_Z \cdot\< q_k - z_k p_k \mid k=1,\dots m\> 
\cap D_\barZ = D_Z \cdot \Horn(B,\kappa) \cap D_\barZ
\subseteq D_\barZ.
\end{equation}
\end{enumerate}

Once $B$ and $\kappa$ are fixed, the holomorphic solutions of these 
two types of systems coincide, while the modules themselves are truly 
different, see Example~\ref{ex:preliminaryCounterexample}. 
We show in 
Corollary~\ref{cor:Pi of lattice basis is saturated Horn} that 
$\sHorn(B,\kappa)$ is essentially captured by the image of 
$\Pi^{\tilde A}_B$, and it therefore has a natural $D$-module theoretic relationship with its equivariant counterpart, the lattice basis binomial $D_\barX$-module. 
We use this in  
Corollaries~\ref{cor:binomial quotients}.\eqref{item:binomial:Horn-regHol} and~\ref{cor:monodromy horn} to characterize 
the holonomicity, regularity, and reducibility of monodromy representation
of saturated Horn systems. 
Despite their classical nature, 
these are the first $D$-module theoretic results for Horn systems, 
aside from ~\cite{sadykov-mathscand,DMS}. 
We close with
a discussion of the 
relationship between the image
under $\Pi^{\tilde A}_B$ of an 
$A$-hypergeometric system and the Horn--Kapranov
uniformization of discriminantal varieties in~\S\ref{sec:binomial Sing}. 

\vspace{-2mm}
\subsection*{Outline for Part~I}
In \S\ref{sec:defns}, 
we provide background on $D$-modules and 
define the torus actions we 
will consider, while we 
introduce important categories of 
equivariant $D$-modules in 
\S\ref{sec:categories}. 
We work with the torus invariants functor for $D$-modules on $X = (\CC^*)^n$ in 
\S\ref{sec:invariants}. 
Over $\barX = \CC^n$ in \S\ref{sec:Pi}, 
we define and state initial properties of the functor $\Pi^{\tilde A}_B$, 
while \S\ref{sec:E-beta} 
provides more refined information. 

\vspace{-3mm}
\subsection*{Outline for Part~II} 
In \S\ref{sec:binomial}, 
we discuss binomial $D_\barX$-modules, 
applying $\Pi^{\tilde A}_B$ to them in 
\S\ref{sec:binomial hol Pi}. 
This yields explicit characterizations of $D$-module properties for saturated Horn systems in terms of their parameter sets. 
We explain the 
relationship between the image
under $\Pi^{\tilde A}_B$ of an 
$A$-hyper\-geo\-met\-ric system and the Horn--Kapranov
uniformization of discriminantal varieties in~\S\ref{sec:binomial Sing}. 
 
\vspace{-2mm}
\subsection*{Acknowledgements}
We are grateful to Frits Beukers, Alicia Dickenstein, Brent Doran,
Anton Leykin, Ezra Miller, Christopher O'Neill, Mikael Passare, and
Bernd Sturmfels, who have generously shared their insight and
expertise with us while we worked on this project.  
Parts of this work were carried out at the Institut Mittag-Leffler
program on Algebraic Geometry with a view towards Applications and the
MSRI program on Commutative Algebra. We thank the program organizers
and participants for exciting and inspiring research atmospheres.  

\section*{{\bf Part I: \ Torus invariants and \texorpdfstring{$D$}{D}-modules}}

\section{\texorpdfstring{$D$}{D}-modules and torus actions}
\label{sec:defns}

Fix integers $n > m > 0$ and set $d\defeq n-m$. After reviewing some
background on $D$-modules and their solutions, we describe the action
of the algebraic $d$-torus $T \defeq (\CC^*)^d$ on $\CC^n$ and show how
this action extends to regular functions, differential operators, and
holomorphic germs. 

\subsection{\texorpdfstring{$D$}{D}-modules}
\label{subsec:Dmods}
Let $\barX \defeq \CC^n$ with coordinates $x \defeq (x_1,\dots,x_n)$. The
\emph{Weyl algebra} $D_\barX$ is the ring of differential operators on
$\barX$, which is a quotient of the free associative $\CC$-algebra
generated by $x_1,\dots,x_n, \dx{1},\dots,\dx{n}$ by the two-sided ideal  
\[
\left\< x_i x_j - x_j x_i,\ 
\dx{i}\dx{j} - \dx{j}\dx{i}, \ 
\dx{i}x_j - x_j\dx{i} +\delta_{ij}
\mid i,j\in\{1,\dots,n\} \right\>,
\] 
where $\delta_{ij}$ is the Kronecker $\delta$-function. Let $\barZ \defeq
\CC^m$ with coordinates $z \defeq (z_1,\dots,z_m)$, and define its Weyl
algebra $D_\barZ$ analogously, with $\dz{i}$ denoting the operator for
differentiation with respect to~$z_i$. Throughout this article, let  
\begin{align*}
\theta_i \defeq x_i\dx{i}, 
\quad \eta_i \defeq z_i\dz{i}, \quad  
\theta\defeq [\theta_1\ \theta_2\ \cdots\ \theta_n], 
\quad \text{ and } \quad 
\eta \defeq [\eta_1\ \eta_2\ \cdots\ \eta_m]. 
\end{align*}
Other relevant rings are the Laurent Weyl algebras $D_X$ and $D_Z$,
defined as the rings of linear partial differential operators with
Laurent polynomial coefficients, that is,  
\[
D_X 
	= \CC[x^{\pm}]\otimes_{\CC[x]}D_{\barX} \quad \text{and} \quad 
D_Z 
	= \CC[z^{\pm}]\otimes_{\CC[z]}D_{\barZ},
\]
where $\CC[x^{\pm}]$ and $\CC[z^{\pm}]$ denote denote the rings of differential operators on 
\[
X \defeq 
\barX\minus \Var\left(x_1\cdots x_n \right) 
= (\CC^*)^n \quad
\text{and} \quad
Z \defeq 
\barZ\minus \Var\left(z_1\cdots z_m \right) 
= (\CC^*)^m.
\]

The weight filtration induced by
$F \defeq (\boldzero_n,\boldone_n) \in \QQ^{2n}$ is called
the \emph{order filtration} on $D_{\barX}$. For $k\in\QQ$, its $k$th filtered piece
\[
F^k D_\barX \defeq \CC\cdot \{x^u\del_x^v \mid  F\cdot(u,v) = |v| \leq k\} 
\]
consists of   
all operators of order at most $k$. 
For any $P$ of order $k$, we denote its \emph{symbol} by   
$\ini_F(P) \defeq P+F^{<k}D_\barX\in\gr^{F,k} D_\barX \defeq F^k D_\barX /F^{<k} D_\barX$. 
By our convention, the associated graded ring $\gr^F D_\barX \defeq \bigoplus_k \gr^{F,k}D_\barX$ is isomorphic to the coordinate ring of $T^*\barX\cong \CC^{2n}$. 
Denoting $\ini_F(\dx{i})=\xi_i$ and abusing language by
writing $x_i$ for $\ini_F(x_i)$, we have $\gr^F D_\barX\cong
\CC[x_1,\dots,x_n,\xi_1,\dots,\xi_n]$. Similarly, $\gr^F
D_\barZ \cong \CC[z_1,\dots,z_m,\zeta_1,\dots,\zeta_m]$,
where $\zeta_i$ is the symbol of $\dz{i}$. 

Let $M$ be a finitely generated left $D_\barX$-module. The weight vector
$F = (\boldzero_n,\boldone_n)$ on $D_\barX$ induces the \emph{order filtration} on (a presentation of) $M$. The
\emph{characteristic variety} of $M$ is $\supp(\gr^F(M))\subseteq T^*\barX
\cong\CC^{2n}$. We denote this set by $\charVar(M)$; it is well-defined (see~\cite{schulze-walther-duke} for references), Zariski closed, and agrees with the zero set of the ideal $\ann(\gr^F(M))$. 
Bernstein's inequality says
that the characteristic variety of a left $D_\barX$-ideal has dimension at least 
$n$~\cite{bernstein}. Smith~\cite{smith-characteristic-variety}
refined this result, showing that every component of $\charVar(M)$
has dimension at least $n$.   
We say that $M$ is
\emph{holonomic} if it is $0$ or its characteristic variety has dimension $n$.

The \emph{rank} of the module $M$ is 
$\rank(M) \defeq \dim_{\CC(x)} ( \CC(x)\otimes_{\CC[x]} M)$. 
If $M$ is holonomic, then $\rank(M)$ is finite, see~\cite[Proposition~1.4.9]{SST}. 

The projection of $\charVar(M) \minus \Var(\xi_1,\ldots, \xi_n )$ 
onto the $x$-coordinates is called the \emph{singular locus of $M$},
denoted $\Sing{M}$. Any $p\in\Sing{M}$ is called a \emph{singular
  point} of $M$. 

Let $Y$ be a complex manifold of dimension $n$ with a point $p\in Y$.
The holonomic left $D_Y$-module $M$ is said to be \emph{regular 
at $p$} if for any curve $C\subseteq Y$ passing through $p$, with smooth 
locus $\iota\colon C_s\into Y$, there is an open neighborhood $U$ of $p$ in $Y$ such 
that all the sheaves $\mathbb{L}^k \iota^*(M|_U)$ are connections on 
$C_s$ with regular singular ODEs on a smooth compactification $\overline{C}$ of $C_s$. 
(These sheaves are zero outside the range $-n+1\le k\le 0$.)   
The module $M$ is \emph{regular holonomic} if it is regular holonomic
at every point $p\in\barX$. 

The notion of regularity is equivalent to requiring that the natural
restriction map from formal to analytic solutions of $M$ be an
isomorphism in the derived category. As such, it generalizes the
classical definition of regular (Fuchsian) singularities for an
ordinary differential equation. Note that, since we consider a
compactification of $C$, regular holonomicity on $Y$ includes
information about the behavior of (derived) solutions at infinity.  

The categories of holonomic and regular holonomic $D_Y$-modules are
Abelian and closed under the formation of extensions, and they
form full subcategories of the category of $D$-modules.  Both
holonomicity and regularity are preserved by direct and inverse 
images along morphisms of smooth varieties. See~\cite{borel} for more
details. 

A holonomic left $D_Y$-module $M$ is said to have 
\emph{irreducible monodromy representation}
if the module $M(y)\defeq \CC(Y)\otimes_{\CC[Y]}M$ is an irreducible
module over  $D(y)\defeq \CC(Y)\otimes_{\CC[Y]}D_Y$. Here, $\CC[Y]$ and
$\CC(Y)$ are the rings of regular and rational functions on $Y$, 
respectively. If $M$ does not have irreducible monodromy 
representation, then its solution space has a  nontrivial proper
subspace that is monodromy invariant, since the fundamental group of
$Y\minus\Sing{M}$ is finitely presented.

Throughout, on any $\CC$-scheme, we mean by ``differential operators''
the sheaf of $\CC$-linear differential operators on the corresponding
structure sheaf.  
Since we consider only products of affine spaces and tori as
underlying manifolds, which are all $D$-affine, we may restrict our
attention to global sections. 

\subsection{Solutions of \texorpdfstring{$D$}{D}-modules}
\label{subsec:solutions}

Let $\mathcal{O}_{p, \barX}^{{\rm an}}$ be the space of germs 
of holomorphic functions at $p \in X$, and use $\bullet$ for the action of
$D_\barX$ on $\OO_{p,\barX}^{\text{an}}$. Given a left $D_\barX$-ideal $J$, the 
\emph{solution space $\Sol_p(D_\barX/J)$ at $p$} consists of elements of
$\mathcal{O}_{p, \barX}^{{\rm an}}$ that are annihilated by $J$:
\[
\begin{array}{rcl}
\hom_{D_\barX}(D_\barX/J, 
	\OO_{p,\barX}^{\text{an}})
 & \stackrel{\sim}{\longrightarrow}  & 
 \{ f\in\OO_{p,\barX}^{\text{an}} 
 	\mid P\bullet f = 0 \ \forall P\in I \}
 = \Sol_p(D_\barX/J).\\
\Phi & \mapsto & f(x) \defeq \Phi(1+J)
\end{array}
\]

\begin{theorem}[Kashiwara, see~{\cite[Theorem~1.4.19]{SST}}]
\label{thm:CKK}
Let $J$ be a left $D_\barX$-ideal such that $D_\barX/J$ is holonomic, and let $p$ be a 
nonsingular point of $D_\barX/J$. Then $\dim_{\CC} (\Sol_p(D_\barX/J))$ is finite, 
independent of $p$, and equal to $\rank(D_\barX/J)$. 
\qed
\end{theorem}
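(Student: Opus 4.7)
The plan is to reduce the statement to the classical Cauchy--Kovalevskaya theorem by showing that, near any nonsingular point $p$, the holonomic module $M := D_\barX/J$ is an integrable connection.

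First I would unpack the definition of $\Sing{M}$: it is the image under $T^*\barX \to \barX$ of $\charVar(M) \setminus \Var(\xi_1,\dots,\xi_n)$. Thus $p \notin \Sing{M}$ means that, over some Zariski open neighborhood $U$ of $p$, every point of $\charVar(M) \cap T^*U$ has vanishing $\xi$-coordinates; by Bernstein--Smith, each component of $\charVar(M)$ has dimension at least $n$, while by holonomicity each component has dimension exactly $n$. The only $n$-dimensional component of $\charVar(M)$ projecting into $U$ that lies entirely in the zero section must be (a union of components of) the zero section itself, so $\charVar(M|_U)$ equals the zero section $\Var(\xi_1,\dots,\xi_n) \cap T^*U$.

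Second, I would invoke the structural result that a coherent $D_U$-module whose characteristic variety is the zero section of $T^*U$ is a coherent $\OO_U$-module, i.e.\ an integrable connection. Passing to the analytification near $p$, the germ $M^{\mathrm{an}}_p$ is a free $\OO^{\mathrm{an}}_{p,\barX}$-module of some finite rank $r$. By Cauchy--Kovalevskaya (applied on a simply connected analytic neighborhood of $p$), the $\CC$-dimension of $\Sol_p(M)$ equals this rank $r$.

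Third, I would identify $r$ with $\rank(M)$. Since the formation of $\CC(x) \otimes_{\CC[x]} (-)$ commutes with localization at $p$ and since $M^{\mathrm{an}}_p$ is free of rank $r$ as an $\OO^{\mathrm{an}}_{p,\barX}$-module, faithful flatness of $\OO^{\mathrm{an}}_{p,\barX}$ over $\OO_{p,\barX}$ gives that $M$ is locally free of rank $r$ as an $\OO_\barX$-module on a Zariski open neighborhood of $p$; tensoring with $\CC(x)$ therefore yields a $\CC(x)$-vector space of dimension $r$, which is $\rank(M)$ by definition. The independence of $p$ follows because $\rank(M)$ does not depend on which nonsingular point one chose. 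The main technical obstacle is the passage from ``characteristic variety equals the zero section'' to ``$\OO_U$-coherent,'' which is the point where one genuinely uses that one is working with a coherent $D$-module and not just an abstract module; I would cite~\cite[Proposition~1.4.9]{SST} (and the surrounding discussion) to handle this step and the finiteness of the rank.
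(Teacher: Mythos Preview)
The paper does not prove this statement at all: it is stated as a result of Kashiwara, attributed to~\cite[Theorem~1.4.19]{SST}, and closed immediately with a \qed. Your proposal, by contrast, sketches the standard argument behind that citation (zero-section characteristic variety $\Rightarrow$ integrable connection $\Rightarrow$ Cauchy--Kovalevskaya gives a solution space of dimension equal to the $\OO$-rank, which coincides with $\rank(M)$). The outline is correct and is essentially the proof one finds in the referenced source, so there is no genuine discrepancy---you have simply written out what the paper chose to cite.
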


Given $w\in\RR^n$, $(-w,w)\in\RR^{2n}$ induces a filtration on $D_\barX$.  
Note that $\gr^{(-w,w)}D_\barX \cong D_\barX$, which is in contrast to
the case in \S\ref{subsec:Dmods} for a weight vector $L\in\RR^{2n}$;
however, for $P\in D_\barX$, we may analogously define
$\ini_{(-w,w)}(P)\in D_\barX$.  
If $J\subseteq D_\barX$ is a left $D_\barX$-ideal, then set
$\gr^{(-w,w)}(J) \defeq \< \ini_{(-w,w)}(P)\mid P\in J\>$, which by abuse
of notation, we view as a left $D_\barX$-ideal.  

\begin{definition}
\label{def:generic:weight}
Let $J \subseteq D_\barX$ be a left $D_\barX$-ideal. We say that
$w\in\RR^n$ is a  \emph{generic weight vector} for $D_\barX/J$ if
there exists an ($n$-dimensional) open rational polyhedral cone  
$\Sigma\subseteq\RR^n_{>0}$ with trivial lineality space such that
$w\in\Sigma$ and for all $w'\in\Sigma$, $\gr^{(-w,w)}(J) =
\gr^{(-w',w')}(J)$.  
\end{definition}

\begin{definition}
\label{def:basic nilsson}
Let $J \subseteq D_\barX$ be a left $D_\barX$-ideal, and assume that
$w\in\RR^n$ is a generic weight vector for $D_\barX/J$.  
Write $\log(x) \defeq (\log(x_1),\dots,\log(x_n))$. 
A formal solution $\phi$ of $D_\barX/J$ is called a \emph{basic
  Nilsson solution of $D_\barX/J$ in the direction of $w$} if it has
the form  
\[
\phi(x) = \textstyle\sum_{u\in C} x^{v+u}p_u(\log(x)),
\]
for some vector $v\in\CC^n$, such that the following conditions are satisfied:
\vspace{-2mm}
\begin{enumerate}
\item 
$C$ is contained in 
$\Sigma^*\cap\ZZ^n$, where $\Sigma$ is as in Definition~\ref{def:generic:weight}. 
Here, $\Sigma^*$ is the \emph{dual cone} of $\Sigma$, which consists
of the vectors $u\in\RR^n$ with $u\cdot w'\geq 0$ for all
$w'\in\Sigma$.  
\item 
the $p_u$ are polynomials, and there exists $k\in\ZZ$ such that
$\deg(p_u)\leq k$ for all $u\in C$. 
\item 
$p_0\neq 0$. 
\end{enumerate}
\vspace{-2mm}
The set $\supp(\phi) \defeq \{ u\in C\mid p_u\neq 0\}$ is called the \emph{support} of $\phi$. 
The $\CC$-span of the basic Nilsson solutions of $D_\barX/J$ in the
direction of $w$ is called the \emph{space of formal Nilsson solutions
  of $D_\barX/J$ in the direction of $w$}, denoted $\sN_w(D_\barX/J)$.  
\end{definition}

\begin{theorem}\cite[Theorems~2.5.1 and~2.5.14]{SST}
\label{thm:solutions-of-regular-holonomic}
Let $J$ be a left $D_\barX$-ideal such that $D_\barX/J$ is regular holonomic.
If $w\in \RR^n$ is a generic weight vector for $D_\barX/J$, then 
\[
\dim_\CC(\sN_w(D_\barX/J)) = \rank(D_\barX/J). 
\]
Further, there is an open set $U\subseteq\barX\minus\Sing{D_\barX/J}$
such that the basic Nilsson solutions of $D_\barX/J$ in the direction
of $w$ simultaneously converge at each $p\in U$ and form a basis for
$\Sol_p(D_\barX/J)$.  
\qed
\end{theorem}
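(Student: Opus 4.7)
The plan is to follow the Gröbner deformation approach of Saito, Sturmfels, and Takayama. Fix a generic weight $w$ for $D_\barX/J$, so there is an open rational polyhedral cone $\Sigma \subseteq \RR^n_{>0}$ containing $w$ on which $\gr^{(-w,w)}(J)$ is constant; write $\ini := \gr^{(-w,w)}(J)$. The idea is to move between $J$ and its Gröbner deformation $D_\barX/\ini$, exploiting that the deformed system has combinatorially explicit formal solutions while $J$ itself has convergent analytic solutions by regularity.

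First, I would establish $\rank(D_\barX/J) = \rank(D_\barX/\ini)$. The standard argument introduces a parameter $t$ and forms the Rees-type homogenization associated to the $(-w,w)$-filtration, which is flat over $\CC[t]$. Holonomicity and holonomic rank are preserved under this flat family (the specialization at $t=1$ recovers $D_\barX/J$ and at $t=0$ recovers $D_\barX/\ini$), and upper semicontinuity of generic rank in a flat family of holonomic modules yields the equality.

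Next, I would construct, for each basic Nilsson solution $\phi_0 = \sum_{u\in C} x^{v+u} p_u(\log x)$ of $D_\barX/\ini$ in direction $w$, a corresponding basic Nilsson solution of $D_\barX/J$ with the same leading exponent $v$ and same leading polynomial $p_0$. Because $w$ lies in the interior of $\Sigma$, any series supported in $\Sigma^* \cap \ZZ^n$ can be $(-w,w)$-filtered term by term, and its leading $(-w,w)$-part is annihilated exactly when it solves the deformed system; genericity forces this leading part to be a Nilsson solution of $D_\barX/\ini$. One then solves order-by-order in the $(-w,w)$-filtration for correction terms, obtaining a formal solution of $J$ whose $(-w,w)$-leading term is $\phi_0$. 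This is the multivariate Frobenius-type construction of \cite[\S 2.5]{SST}. The resulting map $\sN_w(D_\barX/\ini) \hookrightarrow \sN_w(D_\barX/J)$ is injective because the leading terms are preserved. Since one checks directly from the combinatorial structure of $\ini$ (which under the genericity is essentially torus-homogeneous in $(-w,w)$-degree) that $\dim_\CC \sN_w(D_\barX/\ini) = \rank(D_\barX/\ini)$, the rank equality above then gives the first conclusion, together with the fact that the constructed map is surjective.

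Finally, for convergence and the basis property, regular holonomicity enters crucially. Restricting the formal Nilsson series along a generic line transverse to $\Sing{D_\barX/J}$ produces formal solutions of an ordinary differential equation that is regular singular by the restriction property of regular holonomic modules; the classical theory then forces convergence in a punctured neighborhood of the chosen boundary point. A uniform estimate over a family of such lines yields an open set $U \subseteq \barX \minus \Sing{D_\barX/J}$ on which every basic Nilsson solution in direction $w$ converges. Linear independence of the resulting holomorphic germs follows from linear independence of their formal expansions (distinct leading $x^v\log(x)^k$-terms). Kashiwara's Theorem~\ref{thm:CKK} gives $\dim_\CC \Sol_p(D_\barX/J) = \rank(D_\barX/J)$ at each $p \in U$, which equals the number of independent convergent Nilsson solutions, so they form a basis.

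The main obstacle is the convergence step: the algebraic construction of formal Nilsson solutions via the Frobenius recursion produces series whose coefficients satisfy no obvious growth bound, and extracting convergence requires a careful restriction-to-curves argument together with the full force of regularity (including behavior at infinity on the compactified curve), rather than just holonomicity.
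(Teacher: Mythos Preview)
The paper does not give a proof of this statement: it is quoted from \cite[Theorems~2.5.1 and~2.5.14]{SST} and closed immediately with a \qed. So there is no in-paper argument to compare against; the relevant comparison is with the source you are paraphrasing.

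Your outline is a faithful sketch of the Saito--Sturmfels--Takayama approach: pass to the Gr\"obner deformation $\gr^{(-w,w)}(J)$, identify its Nilsson solutions combinatorially, lift them to formal Nilsson solutions of $J$ by a Frobenius-type recursion in the $(-w,w)$-filtration, and then invoke regularity (via restriction to curves) to upgrade formal convergence to honest convergence on an open set. One small correction: the rank equality $\rank(D_\barX/J)=\rank(D_\barX/\ini)$ in \cite{SST} is not obtained by a semicontinuity argument in a flat family but rather through the comparison of standard monomials for $J$ and $\ini$ (see \cite[Theorem~2.2.1]{SST}); your Rees-family phrasing is morally in the same spirit but is not how the cited proof runs. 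Otherwise your summary matches the referenced argument, which is all the paper relies on.
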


\subsection{Torus actions in the Weyl algebra}
\label{subsec:torus actions}

\begin{convention}
\label{conv:A}
Fix a $d\times n$ integer matrix $A$ of rank $d$ whose columns span
$\ZZ^d$ as a lattice. We denote the columns of $A$ by $a_1,\dots,a_n$. 
Throughout this article we assume that $A$ is \emph{pointed};  
in other words, there exists $h \in \RR^d$ such that 
$h\cdot a_i > 0$ for all $i=1,\dots,n$. 
\endrk
\end{convention}

\begin{definition}
\label{def:torusAction}
We consider the action of the torus $T=(\CC^*)^d$ on $\barX$ given by 
\[
(t_1,\dots,t_d) \diamond (x_1,\dots,x_n) \defeq
(t^{a_1}x_1,\dots,t^{a_n}x_n).
\] 
\end{definition}
We let $\sT$ denote the torus orbit of $\boldone_n$ in $\barX$. By our
assumptions on $A$, the map $T\to\sT$ given by $t\mapsto t\diamond \boldone_n$ is
an isomorphism of varieties.
This torus action induces an action of $T$ on $D_\barX$ via
\begin{equation}
\label{eqn:action}
t \diamond x_i \defeq t^{a_i} x_i, \quad 
t \diamond \dx{i} \defeq t^{-a_i} \dx{i}, 
\hspace{.5cm} 
\text{ for } 1 \leq i \leq n \text{ and } 
t=(t_1,\dots,t_d) \in T. 
\end{equation}
An element $P \in D_\barX$ is \emph{torus homogeneous of weight $a \in
  \ZZ^d$} if $t \diamond P = t^a P$ for all $t \in T$. A left
$D_\barX$-ideal is torus equivariant if and only if it is generated by
torus homogeneous elements.

The torus action on $D_\barX$ imposes a multigrading, called the
\emph{$A$-grading}, given by 
\[
\deg(x_i)\defeq a_i
\quad\text{and}\quad 
\deg(\dx{i})\defeq -a_i.
\] 
The grading group is $\ZZ A=\ZZ^d$, the Abelian group generated by the 
columns of $A$.
An element $P = \sum \lambda_{u,v}x^u\del_x^v \in D_\barX$ 
has degree $a \in \ZZ A = \ZZ^d$ if and only if 
$Au-Av=a$ whenever $\lambda_{u,v} \neq 0$. 
The term \emph{$A$-graded} is used to emphasize that the torus
action is defined by $A$. 

The torus action also passes to germs of functions on $\barX$. 
For $t \in T$ and $x \in \barX$, denote 
$t \diamond x \defeq (t^{a_1}x_1,\dots,t^{a_n}x_n)$.
A function that satisfies
$\varphi(t \diamond x) = t^{\beta}\varphi(x_1,\dots,x_n)$ 
for all $t$ in an open subset of $T$ has degree $\beta\in\CC^d$. 
Note that any $\beta \in \CC^d$ may occur as a weight of a function. 
The subspace of $\OO_{p, \barX}^{{\rm an}}$ consisting of $A$-graded
germs of degree $\beta$ is denoted $\big[\OO_{p, \barX}^{\text{an}}\big]_{\beta}$.

\begin{remark}
\label{rem:eulers}
The space 
$\big[\OO_{p, \barX}^{\text{an}}\big]_{\beta}$ 
can be viewed as the set of 
$\beta$-eigenvectors for the 
Euler operators $E$ of $A$. 
As $\beta$ defines a character of
$T$, the action of $T$ on $X$ (really, on $T^*X$) can be viewed as
an action of its Lie algebra. 
Loosely, the operators in $E$ are the (Fourier transforms of the) 
pushforwards of generators of the Lie algebra of the torus. Thus, the
solutions of the Euler operators $E-\beta$ are precisely the functions
which are infinitesimally torus homogeneous of weight $\beta$.  
In other words, 
\begin{align*}
\hspace{5cm}
{\rm Hom}(D_\barX/ \< E-\beta\>, \ \OO_{p, \barX}^{{\rm an}}) 
	\cong \big[\OO_{p, \barX}^{{\rm an}} \big]_{\beta}.
\hspace{4.54cm}
\hexagon
\end{align*}
\end{remark}

\begin{remark}
\label{rem:Agrd nilsson}
If $J$ is an $A$-graded left $D_\barX$-ideal, then any series solution
of $D_\barX/J$ may be decomposed as a sum of $A$-homogeneous series,
each of which is also a solution of $D_\barX/J$. In particular, for
a generic weight vector $w$, $\sN_w(D_\barX/J)$ of Definition~\ref{def:basic nilsson} is spanned by basic Nilsson series 
$\phi(x) = \sum_{u\in C} x^{v+u}p_u(\log(x))$ whose support $C$ is
contained in $\Sigma^*\cap\ker_{\ZZ}(A)$. Further, if
$\<E-\beta\>\subseteq J$, then $Av = \beta$.  
\endrk
\end{remark}

\section{Categories of \texorpdfstring{$A$}{A}-graded \texorpdfstring{$D$}{D}-modules}
\label{sec:categories}

In this section, $Y$ is either $X$ or $\barX$, and $\Dmods{Y}$ is the
category of finitely generated $D_Y$-modules. We introduce certain
subcategories of $\Dmods{Y}$ that will be used in this article, and
prove some of their basic properties.

\begin{definition}
\label{def:categories}
Let $\Amods{Y}$ denote the category of finitely generated $A$-graded
$D_{Y}$-modules with $A$-graded morphisms of $A$-degree zero. We let
$\Ahol{Y}$ denote the full subcategory of $\Amods{Y}$ given by
$A$-graded holonomic $D_Y$-modules.   
\end{definition}

\begin{remark}
\label{rem:TsT iso} 
Recall that $\sT=T\diamond\boldone_n \cong T$.  
The ring of differential operators on $\sT$ is the subgroup $D_\sT$ of 
$D_X/(\<x^u-1 \mid u \in \ZZ^n, Au=0\>\cdot D_X)$ generated by the 
monomials $x^v$ with $v\in\ZZ^n$ and the operators $E_1,\dots,E_d$.  
This is indeed a ring and is isomorphic to $D_T$ via $x^v \mapsto t^{Av}$
for $v \in \ZZ^n$ and $E_i \mapsto t_i\del_{t_i}$ for $i=1,\dots,d$.  
Note that $[D_\sT]^T = \CC[E_1,\dots,E_d] =: \CC[E]$ is (isomorphic 
to) a polynomial ring in $d$ variables generated by the 
$T$-equivariant vector fields on $X$.  The containment $D_\barX\subseteq D_X$ is 
such that we may endow $D_\barX$ with a $[D_\sT]^T$-action using the
$E_i$. We shall use this lifted action in the sequel without further mention.
\endrk
\end{remark}

\begin{definition}
\label{def:Thol}
We denote by $\AEmods{Y}$ the smallest full subcategory of $\Amods{Y}$ 
that contains each module $M$ with the property that, for each
homogeneous element $\gamma \in M$, the $D_\sT$-module 
$D_\sT/(D_\sT\cdot \ann_{[D_\sT]^T}(\gamma))$ 
is holonomic.
\end{definition}

Note that there is a natural functor from $\AEmods{\barX}$ to
$\AEmods{X}$ given by  restriction to $X$. 
Also, note that the categories $\Amods{Y}$, $\Ahol{Y}$, and $\AEmods{Y}$ are
all Abelian, and $\Amods{Y}$ and $\Ahol{Y}$ are clearly closed under extensions.

\begin{lemma}
\label{lem:Tholclosed}
The category $\AEmods{Y}$ is closed under extensions.
\end{lemma}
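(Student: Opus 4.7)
The plan is to show that if $0 \to M_1 \to M_2 \to M_3 \to 0$ is a short exact sequence in $\Amods{Y}$ with $M_1, M_3 \in \AEmods{Y}$, then for every homogeneous $\gamma \in M_2$ the cyclic $D_\sT$-module $N_\gamma := D_\sT/(D_\sT \cdot \ann_R(\gamma))$, where $R := [D_\sT]^T = \CC[E]$, is holonomic. Fix such a $\gamma$, let $\bar\gamma$ be its image in $M_3$, and put $I_1 := \ann_R(\gamma)$ and $I_3 := \ann_R(\bar\gamma)$. Since any $P \in R$ annihilating $\gamma$ also annihilates $\bar\gamma$, we have $I_1 \subseteq I_3$, giving the short exact sequence of $R$-modules
\[
0 \to I_3/I_1 \to R/I_1 \to R/I_3 \to 0.
\]

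By Remark~\ref{rem:TsT iso}, $D_\sT$ admits the monomials $\{t^v\}_{v \in \ZZ^d}$ as a basis over $R$, so it is free and in particular flat over $R$. Applying $D_\sT \otimes_R (-)$ to the above sequence yields the short exact sequence of $D_\sT$-modules
\[
0 \to D_\sT \otimes_R (I_3/I_1) \to N_\gamma \to N_{\bar\gamma} \to 0.
\]
The rightmost term is holonomic by the hypothesis $M_3 \in \AEmods{Y}$, so it suffices to prove that the leftmost term is holonomic as well.

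The assignment $P + I_1 \mapsto P\gamma$ defines an $R$-module injection $I_3/I_1 \hookrightarrow M_2$; its image lies in $M_1$ because $P \in I_3$ forces the image $P\bar\gamma$ in $M_3$ to vanish. Denote this image by $K \subseteq M_1$. Since $R$ is Noetherian and $I_3/I_1$ is a submodule of the cyclic module $R/I_1$, $K$ is finitely generated over $R$; choose generators $\delta_1, \ldots, \delta_r \in K$. As $R$-multiples of the homogeneous element $\gamma$, each $\delta_j$ is itself a homogeneous element of $M_1$, so the hypothesis $M_1 \in \AEmods{Y}$ implies that each $D_\sT/(D_\sT \cdot \ann_R(\delta_j))$ is holonomic. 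Tensoring the surjection $\bigoplus_{j=1}^r R \delta_j \twoheadrightarrow K$ of $R$-modules with the flat $R$-module $D_\sT$ produces a surjection
\[
\bigoplus_{j=1}^r D_\sT/(D_\sT \cdot \ann_R(\delta_j)) \twoheadrightarrow D_\sT \otimes_R K \cong D_\sT \otimes_R (I_3/I_1),
\]
exhibiting $D_\sT \otimes_R (I_3/I_1)$ as a quotient of a finite direct sum of holonomic $D_\sT$-modules, hence holonomic. Since the class of holonomic $D_\sT$-modules is closed under extensions, $N_\gamma$ is holonomic, and as $\gamma$ was arbitrary we conclude $M_2 \in \AEmods{Y}$.

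The only delicate ingredient is the flatness of $D_\sT$ over $R = [D_\sT]^T$, which lifts the $R$-module exact sequence to a $D_\sT$-module one; after that, the argument is bookkeeping with cyclic modules together with the standard closure of holonomicity under subquotients and extensions. The argument is internal to $D_\sT$ and $R$, so no case distinction between $Y = X$ and $Y = \barX$ arises.
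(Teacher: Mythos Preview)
Your proof is correct, and it takes a genuinely different route from the paper's. The paper argues by ideal containment: writing $J := D_\sT \cdot \ann_R(\bar\gamma)$ with generators $P_1,\dots,P_k \in R$ and letting $I_j := D_\sT \cdot \ann_R(P_j\gamma)$, it shows $D_\sT \cdot \ann_R(\gamma) \supseteq (\bigcap_j I_j)\cdot J$, and then proves two auxiliary facts---that holonomicity of $D_\sT/J_1$ and $D_\sT/J_2$ implies holonomicity of both $D_\sT/(J_1\cap J_2)$ and $D_\sT/(J_1 J_2)$---to conclude. You instead exploit the structural fact that $D_\sT$ is free (hence flat) as a right $R$-module, which lets you tensor the short exact sequence $0 \to I_3/I_1 \to R/I_1 \to R/I_3 \to 0$ up to $D_\sT$-modules in one stroke and then finish with standard closure properties of holonomicity under quotients and extensions. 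Your approach is more homological and avoids the two ad hoc lemmas on intersections and products of ideals; the paper's approach is more hands-on and does not invoke flatness. Both are short, but yours isolates the key point (freeness of $D_\sT$ over $\CC[E]$) more cleanly. One minor remark: when you write ``$\{t^v\}_{v\in\ZZ^d}$ as a basis over $R$'', it is worth noting explicitly that you mean the \emph{right} $R$-module structure on $D_\sT$, since $R$ is not central; this is what makes $D_\sT \otimes_R(-)$ land in left $D_\sT$-modules.
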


\begin{proof}
Let $0\to M\to N\to P\to 0$ be a short exact sequence in $\Amods{Y}$
with $M$ and $P$ in $\AEmods{Y}$. For any homogeneous $\gamma\in N$, let
$J\defeq D_\sT\cdot \ann_{[D_\sT]^T}(\gamma +M)$, the annihilator of
$\gamma+M\in P$, be generated by $P_1,\ldots,P_k\in[D_\sT]^T$. Then
$P_1\cdot \gamma,\ldots,P_k\cdot \gamma$ all lie in $M$ and are
hence annihilated by ideals $I_1,\ldots,I_k$ with generators in
$[D_\sT]^T$.  Therefore 
\[
D_\sT \cdot
\ann_{[D_\sT]^T}(\gamma)\supseteq \left(\textstyle\bigcap_j 
I_j\cdot P_1\right)+\ldots+\left(\textstyle\bigcap_j I_j\cdot P_k\right)
\supseteq\left(\textstyle\bigcap_j I_j\right)\cdot J. 
\]
Note that if $J_1$ and $J_2$ are left $D_\sT$-ideals such that each $D_\sT/J_i$ is holonomic,
then $D_\sT/(J_1\cap J_2)$ is also holonomic because there is an exact sequence
\[
0\ \to \ \frac{D_\sT}{J_1\cap J_2} 
\ \to \ \frac{D_\sT}{J_1}\oplus\frac{D_\sT}{J_2} 
\ \to\  \frac{D_\sT}{J_1+J_2}\ \to\  0.
\]
Since each $D_\sT/I_j$ is holonomic by assumption, so is
$D_\sT/\bigcap_j I_j$.  Now if $J_1, J_2$ are such that $D_\sT/J_i$ are
holonomic for each $i$, then $D_\sT/(J_1J_2)$ is also holonomic.  
Indeed, the exact sequence
\[
0\ \to \ \frac{J_2}{J_1J_2}
\ \to \ \frac{D_\sT}{J_1J_2}
\ \to \ \frac{D_\sT}{J_2}
\ \to \ 0
\]
shows that it suffices to prove holonomicity for $J_2/(J_1J_2)$, which
is the quotient of the holonomic module $(D_\sT/J_1)^k$ under the map 
induced by any surjection $D_\sT^k\onto J_2$.  Thus $D_\sT/(\bigcap_j I_j\cdot J)$ is
holonomic, so the same is true for $D_\sT/D_\sT\cdot \ann_{[D_\sT]^T}(\gamma)$, as desired.
\end{proof}

\begin{theorem}
\label{thm:Ahol sub}
The category $\Ahol{Y}$ is a subcategory of $\AEmods{Y}$. 
\end{theorem}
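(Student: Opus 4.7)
The plan is to use the closure of $\AEmods{Y}$ under extensions (Lemma~\ref{lem:Tholclosed}) to reduce to simple objects of $\Ahol{Y}$, and then to verify the defining annihilator property directly via the joint $\CC[E]$-action.

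First I would reduce to the simple case: any $M\in\Ahol{Y}$ has finite length as a $D_Y$-module, and the abelian structure of $\Ahol{Y}$ refines any such series into an $A$-graded composition series with simple subquotients in $\Ahol{Y}$.  By Lemma~\ref{lem:Tholclosed}, it suffices to prove that each simple object of $\Ahol{Y}$ lies in $\AEmods{Y}$.

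Next I would determine when $D_\sT/(D_\sT\cdot J)$ is holonomic for an ideal $J\subseteq[D_\sT]^T=\CC[E]$.  The identification $D_\sT\cong D_T$ of Remark~\ref{rem:TsT iso} sends $E_i$ to $t_i\dt{i}$; combined with the decomposition $D_T=\bigoplus_{v\in\ZZ^d}t^v\cdot\CC[E]$ (as a right $\CC[E]$-module) and the relation $t^v\cdot P(E)=P(E-v)\cdot t^v$, it yields
\[
D_\sT/(D_\sT\cdot J)\;\cong\;\CC[t^{\pm}]\otimes_\CC\CC[E]/J
\]
as a left $\CC[t^{\pm}]$-module. This is a flat connection on $T$ of rank $\dim_\CC\CC[E]/J$, so it is holonomic exactly when $V(J)\subseteq\Spec\CC[E]=\CC^d$ is zero-dimensional.

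The central step is then to show, for simple $M\in\Ahol{Y}$ and every homogeneous $\gamma\in M_a$, that $\ann_{\CC[E]}(\gamma)$ is zero-dimensional. For $\beta\in\CC^d$ set
\[
M^{(\beta)}:=\{v\in M:(E_i-\beta_i)^{N_i}v=0\text{ for some }N_i,\ i=1,\dots,d\}.
\]
Using $[E_i,x_j]=a_{ij}x_j$ and $[E_i,\dx{j}]=-a_{ij}\dx{j}$, multiplication by $x_j$ sends $M^{(\beta)}$ to $M^{(\beta+a_j)}$, and $\dx{j}$ sends it to $M^{(\beta-a_j)}$.  Since $\ZZ A=\ZZ^d$, for every coset $\bar\beta\in\CC^d/\ZZ A$ the sum $\bigoplus_{\beta\in\bar\beta+\ZZ A}M^{(\beta)}$ is an $A$-graded $D_Y$-submodule of $M$, and simplicity forces exactly one such coset to contribute.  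The same shift shows that $\beta-a$ is invariant along this orbit, so the generalized joint eigenvalues of $\CC[E]$ on $M_a$ are concentrated at the single point $\beta_a:=\beta_0+a$ for a fixed $\beta_0\in\CC^d$.  Consequently some $N$ satisfies $(E_i-\beta_{a,i})^N\gamma=0$ for every $i$, and $\ann_{\CC[E]}(\gamma)$ contains the zero-dimensional ideal $((E_1-\beta_{a,1})^N,\ldots,(E_d-\beta_{a,d})^N)$.

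The principal obstacle is verifying that every homogeneous $\gamma$ actually belongs to some $M^{(\beta)}$, i.e., that $\CC[E]$ acts locally finitely on $M$.  This is the essential input from holonomicity, relying on $b$-function-type finiteness in each Euler direction together with the Artinian structure of the cyclic holonomic submodule $D_Y\cdot\gamma$ to extract a uniform nilpotency bound across all $i$.
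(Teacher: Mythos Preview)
Your reduction to simple objects via finite length and Lemma~\ref{lem:Tholclosed}, and the subsequent generalized-eigenspace argument, are a legitimate reorganization: once $\CC[E]$ is known to act locally finitely on a simple $M$, the decomposition $M=\bigoplus_{c}\bigoplus_{a}\bigl(M^{(c+a)}\cap M_a\bigr)$ into $A$-graded $D_Y$-submodules indeed forces a single $c$, and the Artinian conclusion follows. The characterization of when $D_\sT/(D_\sT\cdot J)$ is holonomic is also correct (it is essentially the cyclic case of Theorem~\ref{thm:DsT}).

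The gap is that your final paragraph does not establish local finiteness; it only names it. For general $A$ the operator $E_i=\sum_j a_{ij}\theta_j$ is not attached to a single coordinate hyperplane, so the standard $b$-function for restriction to $\Var(x_j)$ yields a polynomial in $\theta_j$, not in $E_i$; ``$b$-function-type finiteness in each Euler direction'' is not an existing theorem you can invoke. The paper resolves precisely this point through a chain of reductions: first the case where $A$ is the top $d$ rows of an identity matrix on $\barX$ (so $E_i=\theta_i$ and the $b$-function for restriction to $x_i=0$ gives $b_{\gamma,i}(\theta_i)\in\ann_{D_\barX}(\gamma)$ after taking $A$-degree zero); then general $A$ on $X$ is reduced to this diagonal case by the Smith-normal-form monomial change $x\mapsto x^Q$; finally the passage from $X$ back to $\barX$ uses induction on $n$ with local cohomology and Kashiwara equivalence. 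Your simple-module reduction could streamline the last step---a simple $M$ on $\barX$ either embeds in its localization to $X$ or is supported in some $\Var(x_i)$, so no long exact sequence is needed---but the diagonal $b$-function argument and the coordinate change are still required, and you have supplied neither. As written, the proposal defers the essential content of the theorem to an unexecuted sketch.
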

\begin{proof}
We begin by proving the result when the matrix $A$ consists
of the top $d$ rows of an $n\times n$ identity matrix and $Y=\barX$.
In this case, the torus $\sT$ is simply 
$\Var(x_{d+1}-1,\dots,x_n-1)\subseteq X$. 
Thus $D_{\sT}$ is the $\CC[x_1^{\pm},\ldots,x_d^{\pm}]$-algebra generated by
$\theta_1,\ldots,\theta_d$, while the invariant ring $[D_{\sT}]^T$ is
$\CC[\theta_1,\ldots,\theta_d]$. 
If $M$ is a holonomic $A$-graded $D_\barX$-module and $\gamma\in M$ is
a fixed homogeneous element, then the annihilator of $\gamma$ is a
holonomic $A$-graded ideal, say $I\defeq \ann_{D_\barX}(\gamma)$. As such, the
$b$-function $b_{\gamma,i}(s)$ for restriction to $x_i=0$, $1\le i\le
d$, exists (i.e., is nonzero) and satisfies
\begin{align}\label{eq:bfunc}
b_{\gamma,i}(\theta_i)\in I+V^{-1}_i D_\barX.
\end{align}
Here $V^\bullet_iD_\barX$ denotes the Kashiwara--Malgrange filtration
on $D_\barX$, which is given by
\[
V^k_i D_\barX\defeq \Span_\CC \{x^u\del^v\in D_\barX
\mid v_i-u_i\le k\}.
\]
(A discussion of $b$-functions for restriction can be found in~\cite[\S\S5.1-5.2]{SST}.)
Taking the $A$-degree zero part of any equation expressing the
containment~\eqref{eq:bfunc} shows that  $b_{\gamma,i}(\theta_i)\in I$
because no element of $V^{-1}_i D_\barX$ has $A$-degree $0$. (Recall
that, for the moment, $A$ consists of the top $d$ rows of an $n\times n$ identity
matrix.) In other words, for all $i$, the $D_\barX$-annihilator 
of $\gamma$ contains a polynomial in $E_i$. Now since
$\CC[\theta_i]/\<b_{\gamma,i}(\theta_i)\>$ is Artinian, so is the
quotient $\CC[\theta_1,\ldots,\theta_d]/\ann_{[D_\sT]^T}(\gamma)$ of 
$\CC[\theta_1,\ldots,\theta_d]/\sum_i\<b_{\gamma,i}(\theta_i)\>$.
Theorem~\ref{thm:DsT} below completes this special case.

We next observe that the case $Y=X$ follows from the case $Y=\bar X$
(for any fixed $A$). Indeed, if $M$ is holonomic and $A$-graded on $X$, 
then the pushforward of $M$ to $\barX$ (which is just $M$, viewed as a 
$D_\barX$-module) has the same properties and is therefore in 
$\AEmods\barX\supseteq \AEmods X$.

We next consider $Y=X$ but general $A$. The theory of elementary
divisors ascertains the existence of matrices $P\in \Gl(d,\ZZ)$ and
$Q\in \Gl(n,\ZZ)$ such that $PAQ$ is the top $d$ rows of a diagonal
$n\times n$ matrix. Substituting $PA$ for $A$ is just a change of 
coordinates on the grading group and the Euler operators.  On the 
other hand, replacing $A$ by $AQ$ corresponds to the change on the 
grading group and the Euler operators induced by the monomial change 
of coordinates $x\mapsto x^Q$ on $X$.  This change is $T$-invariant on
the category of $D_X$-modules, and thus also on $\Ahol{X}$ and
$\AEmodules(X)$. Now $\ZZ A = \ZZ^d$ implies that $PAQ$ can be chosen
to equal the first $d$ rows of an $n\times n$ identity matrix, which
settles the case that $A$ is arbitrary and $Y=X$.

Finally, we reduce the case $Y=\barX$ to the case $Y=X$. To that end,
use induction on $n$. Let $i\colon X\into \barX$ be the natural
embedding and fix a module $M$ in $\Ahol \barX$.  For $n=0$, the
result is trivial.  For $n=1$, consider the exact sequence of
local cohomology 
\[
0 \to H^0_{x_1}(M) \to M\to i_*i^*M \to H^1_{x_1}(M) \to 0.
\]
The outer terms are supported in $x_1=0$. By Kashiwara equivalence, the case
$n=0$, and since $\AEmods \barX$ is closed under extensions, the proposition
holds for $M$ precisely if it holds for $i_*i^*M$. But we already proved
it holds for $i^*M$, so the case $n=1$ is proven.

The general case is slightly more involved, but it suffices to
indicate the case $n=2$. Given $i_1\colon
X_1=\barX\minus\Var(x_1)\into \bar X$ and $i_2\colon
X_2=X_1\minus\Var(x_2)\into X_1$, there are two exact sequences: 
\begin{align*}
0 \to H^0_{x_1}(M)\to M \to {i_1}_*{i_1}^*M \to H^1_{x_1}(M)\to 0
\qquad \text{and}\qquad 
\\
0 \to H^0_{x_2}({i_1}_*{i_1}^*M) \to {i_1}_*{i_1}^*M \to
{i_2}_*{i_2}^*{i_1}_*{i_1}^*M \to H^1_{x_2}({i_1}_*{i_1}^*M) \to 0.
\end{align*}
By Kashiwara equivalence and the case $n=1$, the outer terms in both
sequences satisfy the proposition. As $\AEmods\barX$ is closed under
extensions, the proposition holds for $M$ if and only if it holds for
${i_1}_*{i_1}^*M$, and that happens if and only if it holds for
${i_2}_*{i_2}^*{i_1}_*{i_1}^*M$. However, the latter module is the
pushforward to $\barX$ of the restriction of $M$ to $X$, so the case
$n=2$ follows from the case $Y=X$ above.
\end{proof}

\begin{example}
The module $D_{\bar X}/(D_{\bar X}\cdot E)$ is in $\AEmods{\bar X}$ but
does not belong to $\Ahol{\bar X}$. 
\endrk
\end{example}

The following result can be used to give alternative descriptions for
the objects of $\AEmods{Y}$, where $Y$ is still $X$ or $\barX$. The
equivalence of the second and last items in Theorem~\ref{thm:DsT}
was proven in~\cite[Proposition~2.3.6]{SST} for cyclic modules, using
different methods. 

\begin{theorem}
\label{thm:DsT}
The following are equivalent for a finitely generated $A$-graded $D_\sT$-module $M$:
\begin{enumerate}
\vspace{-2mm}
\item $M$ is regular holonomic.
	\label{item:reg hol}
\item $M$ is holonomic. 
	\label{item:hol}
	\label{item:Lhol all}
\item $\CC[E]/\ann_{\CC[E]}(\gamma)$ is 
	Artinian for all $\gamma\in M$. 
	\label{itcomplem:Artinian}
\end{enumerate}
\end{theorem}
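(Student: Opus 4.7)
The plan is to exploit the isomorphism $D_\sT \cong D_T$ of Remark~\ref{rem:TsT iso} to reduce all four conditions to a single intrinsic property. Under this identification, any $A$-graded $M$ decomposes as $M = \bigoplus_{\beta \in \ZZ^d} M_\beta$, and multiplication by the invertible monomial $t^\beta$ is a $\CC$-linear isomorphism $M_0 \stackrel{\sim}{\to} M_\beta$ that intertwines the action of each $E_i$ with the shifted action $E_i + \beta_i$. If $M$ has homogeneous $D_\sT$-generators $\gamma_1, \dots, \gamma_k$ of degrees $\beta_1, \dots, \beta_k$, then $M_0$ is finitely generated over $\CC[E]$ by $t^{-\beta_1}\gamma_1, \dots, t^{-\beta_k}\gamma_k$. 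I will show that each of (1)--(4) is equivalent to the single condition $(\star)$: $\dim_\CC M_0 < \infty$.

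The equivalence $(\star) \iff (\ref{itcomplem:Artinian})$ is immediate from the homogeneous decomposition: for $\gamma = \sum_\beta \gamma_\beta \in M$ (a finite sum), the operators in $\CC[E]$ preserve $A$-degree, so $\ann_{\CC[E]}(\gamma) = \bigcap_\beta \ann_{\CC[E]}(\gamma_\beta)$, and each $\CC[E]/\ann_{\CC[E]}(\gamma_\beta) \cong \CC[E]\cdot\gamma_\beta$ is $\CC$-linearly isomorphic (via multiplication by $t^{-\beta}$) to a cyclic $\CC[E]$-submodule of $M_0$. Hence Artinianity of these quotients for every $\gamma$ amounts to $\dim_\CC M_0 < \infty$.

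For $(\star) \iff (\ref{item:hol}) \iff (\ref{item:Lhol all})$, observe that $[E_i, t^\beta] = \beta_i t^\beta$ has strictly smaller $L$-order than the product $E_i\cdot t^\beta$ (using $L_t + L_{\del_t} = c\cdot\boldone_d$ with $c > 0$), so the symbols of $t^\beta$ and $E_j$ commute in $\gr^L D_\sT$, which consequently factors as $\CC[t^\pm] \otimes_\CC \CC[E]$. Together with the $A$-grading and the invertibility of each $t^\beta$ in the associated graded, this yields an isomorphism $\gr^L M \cong \CC[t^\pm] \otimes_\CC (\gr^L M)_0$ of $\CC[t^\pm][E]$-modules, and therefore
\[
\charVar^L(M) \;=\; \sT \times \supp_{\CC[E]}\bigl((\gr^L M)_0\bigr) \;\subseteq\; \sT \times \CC^d \;\cong\; T^*\sT.
\]
Since $(\gr^L M)_0$ is finitely generated over the Noetherian ring $\CC[E]$, the support on the right is zero-dimensional if and only if $(\gr^L M)_0$ is finite-dimensional over $\CC$. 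The induced $L$-filtration on $M_0$ has associated graded $(\gr^L M)_0$, so $\dim_\CC M_0 = \dim_\CC (\gr^L M)_0$. Hence $M$ is $L$-holonomic iff $(\star)$ holds, and the criterion is independent of $L$.

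Finally, $(\ref{item:reg hol}) \Rightarrow (\ref{item:hol})$ is by definition, and $(\star) \Rightarrow (\ref{item:reg hol})$ as follows: the isomorphism $M \cong \CC[t^\pm] \otimes_\CC M_0$ exhibits $M$ as an $\OO_\sT$-coherent connection of rank $\dim_\CC M_0$. A direct computation using $\del_{t_i} = t_i^{-1}E_i$ gives the connection $1$-form in this trivialization as $\omega = -\sum_i E_i\, dt_i/t_i$, with each $E_i$ acting as a constant $\CC$-linear endomorphism of $M_0$. Since $\omega$ has only logarithmic poles along the toric boundary of any smooth toric compactification of $\sT$, the module $M$ is regular at infinity and hence regular holonomic. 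The step I anticipate to require the most care is establishing the uniform tensor decomposition $\gr^L D_\sT \cong \CC[t^\pm] \otimes_\CC \CC[E]$ for arbitrary admissible $L$, as it is this decomposition that makes the characteristic variety computation, and consequently the $L$-holonomicity criterion, independent of the weight $L$.
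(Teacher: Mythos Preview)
Your proof is correct and takes a genuinely different route from the paper's. The paper also transfers to $D_T$, but then argues by reduction to the cyclic case: it first checks directly that $D_T/\langle t_i\del_{t_i}-\beta_i\rangle$ is Fuchsian (hence regular holonomic and $L$-holonomic for every admissible $L$), deduces the implication $(4)\Rightarrow(1),(2),(3)$ for cyclic modules by closure under extensions, and for the reverse uses that a non-Artinian degree-zero part forces infinite rank together with the blanket fact that $L$-holonomic $\Rightarrow$ holonomic from \cite[Theorem~1.4.12]{SST}; the general case is then obtained by passing to a composition chain with cyclic quotients. Your argument bypasses both the cyclic reduction and the appeal to \cite{SST} by introducing the single pivot condition $(\star)$: $\dim_\CC M_0<\infty$, and showing directly that $\gr^L D_\sT\cong\CC[t^{\pm}]\otimes_\CC\CC[E]$ and $\gr^L M\cong\CC[t^{\pm}]\otimes_\CC(\gr^L M)_0$, which makes $\charVar^L(M)=\sT\times\supp_{\CC[E]}(\gr^L M)_0$ and hence the $L$-holonomicity criterion visibly independent of $L$. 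For regularity you invoke the logarithmic-connection criterion rather than Fuchsian equations plus extension closure; these are equivalent, but your formulation is more geometric. What your approach buys is self-containment and a clean conceptual picture (everything is governed by $\dim_\CC M_0$); what the paper's approach buys is modularity and the reuse of standard holonomic-category closure properties without needing to verify the associated-graded decomposition or the separatedness of the induced filtration on $M_0$ (which in your argument is what underlies the equality $\dim_\CC M_0=\dim_\CC(\gr^L M)_0$, and is worth making explicit).
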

\begin{proof}
The isomorphism between $D_\sT$ and $D_T$ in Remark~\ref{rem:TsT iso} 
induces an equivalence of categories between $\Amods{\sT}$ and the 
category of finitely generated left $D_T$-modules that are equivariant with respect 
to the action induced by $T$ acting on itself by multiplication. 
Under this equivalence of categories, (regular) holonomic modules correspond to 
(regular) holonomic modules. 
Given these considerations, it is enough to prove the statements in the context of
$D_T$-modules.

First note that the module $D_T/(\<t_i\del_{t_i}-\beta_i \mid
i=1,\dots,d\>\cdot D_T)$ is regular holonomic because these are Fuchsian equations in
disjoint variables. 
This implies that if $I$ is an equivariant left $D_T$-ideal whose degree zero part
$I_0$ is an Artinian ideal in the polynomial ring
$\CC[t_1\del_{t_1},\dots,t_d\del_{t_d}]$, then the module $D_T/I$ is
regular holonomic, since the category involved is Abelian and closed under extensions. This shows that \eqref{itcomplem:Artinian} implies \eqref{item:reg hol} and \eqref{item:hol} 
in the cyclic case. 

For the reverse implications, still in the cyclic case, suppose that
$I$ is an equivariant left $D_T$-ideal. If $\CC[t\del_t]/I_0 =
\CC[t\del_t]/\ann_{\CC[t\del_t]}(1)$ 
is not Artinian, then $D_T/I$ has infinite
rank, and therefore cannot be (regular) holonomic. 

Finally, we consider the general case. Let $M$ be a finitely generated
$A$-graded equivariant $D_\sT$-module. Since the category of (regular) holonomic modules is closed under
extensions, it is not hard to show that $M$ is (regular) holonomic if and only if it has a finite
composition chain such that each composition factor is cyclic and
(regular) holonomic. Note that the length of such a maximal composition chain depends only on $M$ (and is called the \emph{holonomic length} of $M$).  

Let $\gamma\in M$, and let $N$ be the $\sT$-submodule of $M$ generated
by $\gamma$. The $\CC[E]$-annihilator of $\gamma$ as an element of $M$
is equal to the $\CC[E]$-annihilator of $\gamma$ as an element of
$N$. Since $N$ is cyclic, we have already shown that $N$ satisfies
\eqref{item:reg hol} or \eqref{item:hol} precisely if it satisfies \eqref{itcomplem:Artinian}.  But $M/N$ has smaller
holonomic length than $M$, so the result follows by induction.
\end{proof}

\section{Torus invariants of \texorpdfstring{$D_X$}{DX}-modules}
\label{sec:invariants}

In this section, we consider the exact functor $[-]^T\colon M\mapsto
M_0$ on $\Amods{X}$, which selects the $A$-degree zero part (that is,
the torus invariant part) of a module $M$.  The natural target of this
functor is the category of $[D_X]^T$-modules. Viewing $X$ as the
product $(X/\sT)\times\sT$, $M_0$ inherits an action by $D_{X/\sT}$. Any
\'etale morphism $X/\sT\to Z$ induces (by $D$-affinity) an action of
$D_Z$ on $D_{X/\sT}$ and hence on the category of
$D_{X/\sT}$-modules. This section explains how we may consider the category of $D_Z$-modules to be the target of the invariant functor $[-]^T$. 

\subsection{The functor \texorpdfstring{$\Delta^{\tilde A}_B$}{Delta^{tilde A}_B}}

For our purposes, the important cases are those for which the
composition $X\to X/\sT\to Z$ is a monomial map. These situations are
parameterized by two pieces of data: the possible splittings of $X$
and the Gale duals of $A$. We encode this information in two matrices,
$\tilde A$ and $B$, and denote our  invariants functor
$[-]^T$ by $\Delta^{\tilde A}_B$, as
defined in~\eqref{eq:def Delta}. We construct this
functor in Definition~\ref{def:Delta} and then show in Theorem~\ref{thm:invariants-are-good} that, upon restricting its source to $\AEmods{X}$, it
behaves well with respect to several $D$-module theoretic properties. 

\begin{notation}
\label{not:mu}
If $G$ is an integer $p\times q$ matrix then we denote by $\mu_G$ the
monomial morphism from $(\CC^*)^p$ to $(\CC^*)^q$ that sends
$v\in(\CC^*)^p$ to $v^G=(v^{g_1},\ldots,v^{g_q})$, where
$g_1,\ldots,g_q$ are the columns of $G$. We also denote the
corresponding morphism on the structure sheaves by $\mu_G$. For
example, $\mu_A\colon T\to X$ is the map with image $\sT$.  
\endrk
\end{notation}

Recall that $m=n-d$ and $\ZZ A=\ZZ^d$. The splittings of $X$ that factor
through monomial maps from $X$ to $(X/\sT)\times T$ are in bijection
with those matrices $\tilde A\in\Gl(n,\ZZ)$ whose top $d$ rows agree
with $A$. We denote the bottom $n-d$ rows of such $\tilde A$ by
$A^\perp$. We let $C^\perp$ and $C$ respectively denote the $n\times
d$ and $n\times m$ matrices that form the left and right parts of the
inverse matrix $\tilde C$ to $\tilde A$. 
Note that $\mu_C\colon X\to X/\sT$ comes from a splitting of $X$. 

\begin{convention}
\label{conv:BtildeACK}
An $n\times m$ integer matrix $B$ is a \emph{Gale dual} of $A$ if the
columns of $B$ span $\ker_\QQ(A)$. For the remainder of this article,
fix a Gale dual $B$ of $A$ and a matrix $\tilde A\in\Gl(n,\ZZ)$ whose
top $d$ rows agree with $A$. Since $\tilde A\tilde C$ is the identity,
$AC=0$. Thus the equation $AB=0$ implies the existence of a full rank
integer $m\times m$ matrix $K=A^\perp B$ such that $B=CK$, inducing $\mu_K\colon
X/\sT\to Z$ via $y\mapsto y^K$ as in Notation~\ref{not:mu}. Call the
composition $\mu_K\circ\mu_C\colon X\to X/\sT\to Z$ the \emph{split Gale morphism} attached
to $(\tilde A,B)$.
\endrk
\end{convention}

\begin{notation}
\label{not:Upsilon}
Suppose $(\CC^*)^q$ has a monomial action on $(\CC^*)^p$ given by a matrix $H$, 
and let $\sQ \defeq (\CC^*)^q\diamond \boldone_p\subseteq (\CC^*)^p$. 
Let $\tilde H\in\Gl(p,\QQ)$ be such that its top $q$ rows agree with $H$, 
and denote its bottom $p-q$ rows by $H^\perp$.  
Let $\tilde G = (\tilde H)^{-1}$, and let $G$ denote the matrix given by the final $p-q$ columns of $\tilde G$. 
We assume that $\tilde H$ is such that $G$ is an integer matrix. 
Using $v$ and $y$ for the respective coordinates of 
$(\CC^*)^p$ and $(\CC^*)^{p-q} \cong (\CC^*)^p/\sQ$, set 
\[
\chi_i \defeq v_i\del_{v_i}, 
\quad
\chi \defeq [\chi_1\ \chi_2\ \cdots\ \chi_p], 
\quad
\lambda_i \defeq y_i\del_{y_i}, 
\quad \text{and} \quad 
\lambda \defeq [\lambda_1\ \lambda_2\ \cdots\ \lambda_{p-q}]. 
\]
In accordance with Notation~\ref{not:mu}, 
the matrix $G$ 
induces a quotient map $\mu_G\colon
(\CC^*)^p\to(\CC^*)^{p-q} \cong (\CC^*)^p/\sQ$, and via $\mu_G$, $y$ acts as $v^G$ and
$\lambda$ acts as $H^\perp\chi$, both of which lie in
$[D_{(\CC^*)^p}]^{(\CC^*)^q}$. From this we obtain an action of
$D_{(\CC^*)^{p-q}}= \CC[y^{\pm 1}]\<\lambda\>$ on
$[D_{(\CC^*)^p}]^{(\CC^*)^q}$. We denote by $\Upsilon_G^{\tilde H}$ the
process of endowing a $[D_{(\CC^*)^p}]^{(\CC^*)^q}$-module with a $D_{(\CC^*)^q}$-module
structure via $\mu_G$.
\endrk  
\end{notation}

We will apply Notation~\ref{not:Upsilon} in two cases. The first is
for $\tilde A$ and $\mu_C\colon X\to X/\sT$, and the second is for $K^{-1}$ and $\mu_K\colon X/\sT\to Z$, where $X/\sT$ has a trivial torus action. 
Note that the functors $\Upsilon_C^{\tilde A}$ and $\Upsilon_K^{K^{-1}}$ are related to the maps in the split Gale morphism $X\to X/\sT\to Z$ attached to $(\tilde A,B)$.  

\begin{definition}
\label{def:Delta}
Using Convention~\ref{conv:BtildeACK} and Notation~\ref{not:Upsilon}, define the functor
\begin{align}
\label{eq:def Delta}
\Delta^{\tilde A}_B 
\,\, \defeq \, 
\Upsilon_K^{K^{-1}} \circ \Upsilon_C^{\tilde A} \circ [-]^T
\,\colon 
\,\, 
M\mapsto M_0,
\end{align}
which sends a finitely generated $D_X$-module $M$ to the
$[D_X]^T$-module $M_0$, viewed as $D_Z$-module.  
\end{definition}

\begin{example}
\label{ex:want fg output}
Suppose that $\tilde A = \tilde C$ is an identity matrix and $B =
C$. In this situation, we may write $X = T \times Z$, and $T$ acts on
$X$ by 
$t \diamond (s,z) =
(ts,z)$. Even in this case, it becomes apparent why, if we desire nice output, we must
restrict $\Delta^{\tilde A}_B$ to $\AEmods{X}$. To wit, the torus
invariants of $D_{T\times Z}$  are generated by  $D_Z$ and
$\CC[t_1\del_{t_1},\dots,t_d\del_{t_d}]$. In particular, while
$[D_{T\times Z}]^T = D_Z[t\del_t]$ is a $D_Z$-module, it is not finitely generated. 
Fortunately, if 
$M = \bigoplus_{\alpha\in \ZZ A} M_\alpha 
=\bigoplus_{\alpha\in\ZZ A} M_0\cdot t^\alpha$ 
is in $\Amods{T\times Z}$, then $[M]^T = M_0$ is a finitely generated
$D_Z[t\del_t]$-module. In order for $M_0$ to be a finitely generated
$D_Z$-module, more assumptions are needed to govern the impact
of $t\del_t$; the holonomicity requirement in the definition of
$\AEmods{X}$ provides precisely that. 
\endrk
\end{example}

We are now prepared to state the main result of this section, which 
concerns the restriction of 
$\Delta^{\tilde A}_B$ to $\AEmods{X}$ (see Definition~\ref{def:Thol}). 
This restriction guarantees that the output of the functor is a finitely generated 
$D_Z$-module and compatible with a number of $D$-module theoretic 
properties. 
Recall from \S\ref{sec:categories} that we have the following inclusions of categories:   
\[
\Ahol{X}\subseteq \AEmods{X}\subseteq \Amods{X}\subseteq \Dmods{X}. 
\]
\smallskip

\begin{theorem}
\label{thm:invariants-are-good} 
If $M$ is in $\AEmods{X}$, then $\Delta^{\tilde A}_B(M)$ is a finitely 
generated $D_Z$-module, so that we have a restricted functor
\[
\Delta^{\tilde A}_B \colon \AEmods{X}\to\Dmods{Z}.
\]
Further, the following statements hold for $M$ in $\AEmods{X}$. 
\begin{enumerate}
\vspace{-2mm}
\item \label{item:L-hol}
The module $M$ is holonomic if and only if $\Delta^{\tilde A}_B(M)$ is holonomic.
\item \label{item:regular-hol}
  A module $M$ in $\Ahol{X}$ is regular holonomic if and only
  if $\Delta^{\tilde A}_B(M)$ is regular holonomic.
\item \label{item:irred-monodromy} 
  If $M$ has reducible monodromy representation, then so does $\Delta^{\tilde A}_B(M)$. 
  If the columns of $B$ span $\ker_{\ZZ}(A)$ as a lattice, then the converse also holds. 
\end{enumerate}
\end{theorem}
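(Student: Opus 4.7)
My plan is to exploit the reduction argument from the proof of Theorem~\ref{thm:Ahol sub}: using elementary divisors on $A$ together with a $T$-equivariant monomial change of coordinates on $X$, reduce to the case in which $A$ consists of the top $d$ rows of the identity, $\tilde A$ and $K$ are identity matrices, and $C = B$ consists of the bottom $m$ rows. Then $X = T \times Z$ with $T$ acting only on the first factor, and $\Delta^{\tilde A}_B(M) = M_0$ is the $A$-degree zero part of $M$ with its induced $D_Z$-action. All four conclusions of the theorem are invariant under this reduction.

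For finite generation, fix $A$-homogeneous $D_X$-generators $\gamma_1, \ldots, \gamma_k$ of $M$ and multiply by units $x^{-\deg \gamma_i}$ to arrange $\gamma_i \in M_0$. Theorem~\ref{thm:DsT}(\ref{itcomplem:Artinian}) applied to $M \in \AEmods{X}$ yields polynomials $b_{i,j}(\theta_j) \in \ann_{D_X}(\gamma_i)$ for every $i,j$. Extracting the $A$-degree zero part of a general $D_X$-expression in the $\gamma_i$ then shows that $M_0$ is generated as a $D_Z$-module by the finite family $\{\theta^\alpha \gamma_i : 0 \le \alpha_j < \deg b_{i,j}\}$. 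For part~(\ref{item:L-hol}), observe that $\charVar^L(M) \subseteq T^*X$ is $T$-invariant because $M$ is $A$-graded and $L_x + L_{\del_x} = c\cdot\boldone_n$ is a $T$-invariant weight. In the reduced coordinates $T^*X = T^*T \times T^*Z$, and $T$-invariant subvarieties descend to $T^*Z$ with dimension drop exactly $d$; the weight change $L \mapsto L' = (L_xB,\, c\cdot\boldone_m - L_xB)$ encodes precisely the pullback of weights through $B$, whose columns span the torus-direction cocharacters. Hence $\dim\charVar^{L'}(M_0) = \dim\charVar^L(M) - d$, and the equivalence follows from $n - d = m$.

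For part~(\ref{item:regular-hol}), in the reduced coordinates the map $X \to Z$ is a trivial principal $T$-bundle. Any $A$-graded holonomic $M$ satisfies polynomial Euler equations $b_{i,j}(\theta_j)$ that are automatically regular at $0$ and $\infty$, so all irregular behavior of $M$ lies in the $Z$-directions, where it is faithfully detected by $M_0$; both implications of (\ref{item:regular-hol}) follow. For part~(\ref{item:irred-monodromy}), exactness of $[-]^T$ together with faithfulness of the Upsilon functors yields the forward direction: a proper $A$-graded submodule of $\CC(X)\otimes M$ descends to a proper $\CC(Z)\otimes D_Z$-submodule of $\CC(Z)\otimes M_0$. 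For the converse, the spanning hypothesis $\ZZ B = \ker_\ZZ(A)$ is equivalent to $K \in \Gl(m,\ZZ)$, so $\mu_K$ is an isomorphism, and any proper nonzero $\CC(Z)\otimes D_Z$-submodule $N \subseteq M_0(z)$ lifts uniquely to the $A$-graded $\CC(X)\otimes D_X$-submodule $\tilde N = \bigoplus_{\alpha \in \ZZ A} x^\alpha\cdot N \subseteq M(x)$, which is proper and nonzero since its degree-zero part is $N$.

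The main obstacle I anticipate is part~(\ref{item:regular-hol}): the classical preservation theorems for regular holonomicity apply to smooth inverse images and proper direct images, but the composition $\mu_K \circ \mu_C$ followed by $[-]^T$ is neither in a directly usable form. A careful argument via $V$-filtrations along the torus fibers, or via explicit comparison of formal and analytic solution complexes along the principal $T$-bundle, appears necessary to close this step rigorously.
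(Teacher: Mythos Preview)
Your reduction to the split case $X = T \times Z$ and your finite-generation argument are essentially correct and match the paper's strategy (Lemmas~\ref{lem:B=C} and~\ref{lem:TcrossZ-fg}). However, for parts~(\ref{item:L-hol})--(\ref{item:irred-monodromy}) you are missing the key structural step that drives the paper's proof: after reducing to the split case, the paper further filters $M$ by its $\CC[E]$-eigenvalues (using the Artinian condition in $\AEmods{X}$) so that on each subquotient $N$ the ideal $\ann_{\CC[E]}(\gamma)$ is \emph{maximal} for every homogeneous $\gamma$. This forces the tensor decomposition $N \cong (D_T/\langle t\partial_t - \beta\rangle)\otimes_\CC [N]^T$ as $D_{T\times Z}$-modules. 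Since $H(\beta) = D_T/\langle t\partial_t - \beta\rangle$ is a rank-one regular connection on $T$, all three parts become immediate for $N$: the $L$-characteristic varieties factor as products, regularity is inherited from the two factors, and irreducibility over $\CC(t,z)$ reduces to irreducibility of the $\CC(z)$-factor. The general case follows because the relevant categories are closed under extensions.

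Without this filtration, each of your direct arguments has a genuine gap. For~(\ref{item:L-hol}), you assert that $T$-invariant subvarieties of $T^*X$ descend to $T^*Z$ with dimension drop exactly $d$, but the $T$-quotient of $T^*X$ is $T^*Z\times\CC^d$ (the extra $\CC^d$ recording the symbols of $\theta_1,\dots,\theta_d$), and you have not shown that the subsequent projection to $T^*Z$ preserves dimension; the tensor form makes this transparent. For~(\ref{item:irred-monodromy}), your forward direction assumes that a proper $D_X(x)$-submodule of $\CC(X)\otimes M$ is $A$-graded, which is not automatic since $\CC(X)$ carries no $A$-grading. Your converse is more seriously flawed: the lift $\tilde N = \bigoplus_\alpha t^\alpha\cdot N$ need not be a $D_X(x)$-submodule, because $\partial_{t_i} = t_i^{-1}\theta_i$ and $\theta_i$ need not preserve an arbitrary $D_Z(z)$-submodule $N\subseteq M_0(z)$. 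For instance, take $d=m=1$, $M = D_X/\langle\theta_1^2,\partial_z\rangle$, and $N = \CC(z)\cdot(1+\theta_1)$: then $\theta_1(1+\theta_1) = \theta_1\notin N$, so $\tilde N$ is not $\partial_{t_1}$-stable. Your honest flag on part~(\ref{item:regular-hol}) is well-placed, but the same missing ingredient---the $E$-eigenvalue filtration and resulting tensor decomposition---resolves~(\ref{item:regular-hol}) just as cleanly as it does~(\ref{item:L-hol}) and~(\ref{item:irred-monodromy}).
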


\begin{proof}
This proof occupies the remainder of this section and will be
accomplished through a series of reductions.  

\begin{lemma}
\label{lem:B=C}
If Theorem \ref{thm:invariants-are-good} holds when $K$ from Convention~\ref{conv:BtildeACK} is the
identity matrix, then it holds in general. 
\end{lemma}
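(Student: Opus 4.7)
The plan is to factor $\Delta^{\tilde A}_B = \Upsilon_K^{K^{-1}} \circ \Delta^{\tilde A}_C$, which is immediate from Definition~\ref{def:Delta} since $\Upsilon_I^{I^{-1}}$ is the identity functor, and then to recognize the outer factor as a standard geometric operation along the monomial morphism $\mu_K\colon X/\sT \to Z$, $v\mapsto v^K$. Because $K$ is a full-rank integer $m\times m$ matrix, $\mu_K$ is a finite étale cover of $m$-tori of degree $|\det(K)|$. Unwinding Notation~\ref{not:Upsilon} with $p=m$, $q=0$, and $\tilde H=K^{-1}$, the assignments $y_i\mapsto v^{k_i}$ and $\lambda \mapsto K^{-1}\chi$ coincide with the ring homomorphism $\mu_K^\sharp\colon D_Z \to D_{X/\sT}$; therefore $\Upsilon_K^{K^{-1}}$ is restriction of scalars along $\mu_K$, equivalently the $D$-module pushforward $\mu_{K,*}$. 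Assuming the theorem in the case $K=I$, the lemma reduces to checking that $\mu_{K,*}$ respects each of the three listed properties, with the correct weight translation.

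For item~(1), the weight $L'=(L_xB,c\cdot\boldone_m-L_xB)$ on $D_Z$ pulls back under $\mu_K$ to $L'_C:=(L_xC,c\cdot\boldone_m-L_xC)$ on $D_{X/\sT}$ precisely because $B=CK$. Finite morphisms preserve coherence, and their cotangent lifts are again finite, so both finite generation over $D_Z$ and the dimension of the $L'$-characteristic variety transport faithfully through $\mu_{K,*}$. For item~(2), pushforward along the smooth (even étale) morphism $\mu_K$ preserves regular holonomicity, and the converse descends because $\mu_K^*\mu_{K,*}N$ retains $N$ as a direct summand when $\mu_K$ is étale, while regular holonomicity is preserved under inverse image along smooth morphisms.

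For item~(3), exactness of the functor ensures that reducibility of $M$ always forces reducibility of $\Delta^{\tilde A}_B(M)$. For the converse under the lattice hypothesis, I would first note that the columns of $C$ span $\ker_\ZZ(A)$ as a lattice, since $\tilde A\in\Gl(n,\ZZ)$ forces $\tilde C\in\Gl(n,\ZZ)$ and a short direct computation using $A\tilde C = [I_d\ 0]$ expresses every element of $\ker_\ZZ(A)$ as an integer combination of columns of $C$; hence the columns of $B=CK$ span $\ker_\ZZ(A)$ as a lattice exactly when $K\in\Gl(m,\ZZ)$. In that case $\mu_K$ is a torus isomorphism and $\Upsilon_K^{K^{-1}}$ is an equivalence of categories, so reducibility descends automatically. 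The step I expect to be the main obstacle is the clean identification of $\Upsilon_K^{K^{-1}}$ with $\mu_{K,*}$ and the regular-holonomicity descent in item~(2); once these are established, items~(1) and~(3) follow from standard properties of $D$-module pushforward along finite étale maps of tori together with the bookkeeping $B=CK$.
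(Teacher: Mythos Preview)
Your argument is correct and follows the same reduction as the paper: factor $\Delta^{\tilde A}_B = \Upsilon_K^{K^{-1}} \circ \Delta^{\tilde A}_C$ and verify the conclusions for the outer factor, which you correctly identify with restriction of scalars along the finite \'etale cover $\mu_K$. One small correction: in item~(2), the forward direction holds because $\mu_K$ is \emph{finite} (hence proper), not because it is smooth or \'etale; direct image along a merely smooth morphism (e.g., an open embedding) need not preserve regularity. The paper handles item~(2) more concretely, observing that $\mu_K$ is locally an analytic diffeomorphism so that the curve-based characterization of regularity transfers in both directions; your $\mu_K^*\mu_{K,*}$ direct-summand argument for the converse is a valid categorical alternative. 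Your treatment of item~(3) is in fact more complete than the paper's: you spell out that the lattice hypothesis on $B=CK$ forces $K\in\Gl(m,\ZZ)$, making $\Upsilon_K^{K^{-1}}$ an equivalence, whereas the paper simply remarks that ``only the first half of~(3) applies'' without elaboration.
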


\begin{proof}
We wish to show that, for general $K$,
Theorem~\ref{thm:invariants-are-good} holds for the functor
$\Upsilon_K^{K^{-1}}$. 
Up to a coordinate change on the base and range, $\Upsilon_K^{K^{-1}}$
corresponds to the covering map induced by a diagonal matrix. In this
case, $D_{X/\sT}$ is a free $D_Z$-module of 
rank $|\det(K)| = [\ker_{\ZZ}(A):\ZZ B]$, 
and in particular, $\Upsilon_K^{K^{-1}}$ 
preserves finiteness, as desired. 

If $F = (\boldzero_n,\boldone_n)$ and $F' = (\boldzero_m,\boldone_m)$, then 
the $F$-filtered $D_{X/\sT}$-module $M$ corresponds the $F'$-filtered $D_Z$-module $\Upsilon_K^{K^{-1}}(M)$ (which as an underlying
filtered set is exactly $M$). Hence,~\eqref{item:L-hol} of
Theorem~\ref{thm:invariants-are-good} holds for $\Upsilon_K^{K^{-1}}$.

Part~\eqref{item:regular-hol} 
follows from the fact that $\mu_K$ (see Notation~\ref{not:mu}) is an
algebraic coordinate change that induces (locally on $X/\sT$) a
diffeomorphism. Exponential solution growth along the germ of an
embedded curve on $X/\sT$ is then equivalent to such growth along its
image in $Z$. 

For a general $K$ not equal to the identity matrix, only the first half of~\eqref{item:irred-monodromy} applies, and
this follows because $\Delta^{\tilde A}_B$, and thus $\Upsilon_K^{K^{-1}}$, is
exact.  
\end{proof}

We continue with the proof of Theorem~\ref{thm:invariants-are-good},
assuming now that $K$ is the identity matrix. In parallel to the proof
of Theorem~\ref{thm:Ahol sub}, we next prove the theorem in the case of the
simplest diagonal torus action. In this situation, we write $X = T
\times Z$, where $T$ acts on $X$ by scaling the first factor, namely $t
\diamond (s,z) = (ts,z)$, so the matrix $A$ consists of the first $d$
rows of the $n\times n$ identity matrix $\tilde A$.  

\begin{lemma}
\label{lem:TcrossZ-fg}
If $\tilde A=\tilde C$ and $K$ are identity matrices, 
then Theorem~\ref{thm:invariants-are-good} holds.
\end{lemma}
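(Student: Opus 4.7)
The plan is to exploit the product structure $X = T \times Z$: since $s_1, \dots, s_d$ are invertible on $T$, every $A$-graded $D_X$-module decomposes as $M = \bigoplus_{\alpha \in \ZZ^d} s^\alpha M_0$, with $M_\alpha = s^\alpha M_0$. Consequently the invariant functor $[-]^T \colon M \mapsto M_0$ is an equivalence of categories between $\Amods{X}$ and the category of $D_Z[E]$-modules, where $[D_X]^T = D_Z[E]$ with $E_i = s_i \del_{s_i}$ and quasi-inverse $N \mapsto D_X \otimes_{D_Z[E]} N$. In this simple setting, $\Upsilon_C^{\tilde A}$ and $\Upsilon_K^{K^{-1}}$ are essentially identity functors, so $\Delta^{\tilde A}_B(M)$ is precisely $M_0$ viewed as a $D_Z$-module. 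By Theorem~\ref{thm:DsT}, the hypothesis $M \in \AEmods{X}$ translates into $\CC[E]$ acting on $M_0$ through an Artinian quotient $R = \CC[E]/J$.

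Finite generation of $M_0$ as a $D_Z$-module follows immediately: given $A$-homogeneous $D_X$-generators $\gamma_1, \dots, \gamma_r$ of $M$, shift each into $M_0$ by $s^{-\alpha_i}$; since $[\CC[E], D_Z] = 0$ and $R$ is finite-dimensional over $\CC$, lifting a $\CC$-basis of $R$ yields a finite $D_Z$-generating set $\{f_j \gamma_i\}$ for $M_0$.

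For part~(1), I would compute $\charVar^L(M)$ inside $T^*X \cong T \times \CC^d \times T^*Z$ (with cotangent coordinates $\xi$ in the $s$-directions) by equipping $M_0$ with the $R$-stable good $L'$-filtration $F^k M_0 := \sum (L')^k D_Z \cdot (f_j \gamma_i)$, then pushing forward to a good $L$-filtration on $M$. Because $E_i$ acts on each generalized $E$-eigencomponent of $M_0$ as a scalar plus nilpotent, the computation $\del_{s_i}(s^\alpha m_0) = s^{\alpha - e_i}(E_i + \alpha_i) m_0$ shows that $\del_{s_i}$ strictly lowers $L$-filtration on $M$; equivalently, $\ini_L(\del_{s_i}) = \xi_i$ annihilates $\gr^L(M)$. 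Together with $T$-invariance of $\charVar^L(M)$ and invertibility of $s_i$ on $T$, this yields
\[
\charVar^L(M) \;=\; T \times \{\xi = 0\} \times \charVar^{L'}(M_0),
\]
so $\dim \charVar^L(M) = d + \dim \charVar^{L'}(M_0)$, giving the equivalence of $L$-holonomicity of $M$ with $L'$-holonomicity of $M_0$.

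Parts~(2) and~(3) rely on the same equivalence and $E$-finiteness. For regularity, $E_i - \lambda$ nilpotent amounts to a Fuchsian equation in each $s_i$-direction, so $M$ is automatically regular along any curve meeting $\{s_i = 0\}$ or $\{s_i = \infty\}$; meanwhile $M_0 = i^* M$ for the closed embedding $i \colon Z \cong \{s = \boldone\} \into X$ (the derived pullback is concentrated in degree zero because $s_i - 1$ is regular in $\CC[s^\pm]$), yielding the forward direction. Conversely, $M$ is an iterated extension of modules $p^* M_0^{(\lambda)} \otimes_\CC \OO_T \cdot s^{-\lambda}$ (with $p \colon X \to Z$ the projection and $\lambda$ in the finite $E$-spectrum), each regular holonomic, which finishes the reverse direction by closure of regular holonomicity under extensions. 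For monodromy reducibility, exactness of $[-]^T$ plus the identification $[M(x)]^T = M_0(z)$ transfer $T$-equivariant $D(x)$-submodules of $M(x)$ to $D(z)[E]$-submodules of $M_0(z)$; the existence of equivariant composition series (using that $T$ is connected and $M$ is holonomic of finite length) shows any proper $D(x)$-submodule may be chosen $T$-equivariant, and the lattice condition on $B$ (automatic here since $B = C$ is a $\ZZ$-basis of $\ker_{\ZZ}(A)$) supplies the converse. The main obstacle will be the filtration bookkeeping in part~(1): one must carefully ensure the good filtration on $M_0$ is $R$-stable so that $\xi_i$ really does annihilate $\gr^L(M)$ rather than merely shift filtration by a bounded amount.
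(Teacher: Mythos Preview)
Your approach is essentially the paper's: both exploit the Artinian $\CC[E]$-action on $M_0$ to reduce, via closure under extensions, to the case where $E$ acts by a single scalar $\beta$, at which point the identity $M \cong (D_T/\langle t\partial_t-\beta\rangle)\otimes_\CC M_0$ (equation~\eqref{eq:ann-max-pres}) carries all three parts.

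Two variations in your treatment are worth noting. For part~(1), rather than first reducing to a single $\beta$ and then computing the associated graded of the tensor product as in~\eqref{eq:TtimesZ irred grL}, you work directly with general $M$ via an $R$-stable good $L'$-filtration on $M_0$, observing that nilpotence of each $E_i-\lambda_i$ forces the symbol $\xi_i$ to annihilate $\gr^L M$; this yields $\charVar^L(M)=T\times\{\xi=0\}\times\charVar^{L'}(M_0)$ in one stroke, bypassing the extension reduction for this part. For the forward direction of part~(2), your identification $M_0=\mathbb{L}\iota^*M$ along the closed embedding $\iota\colon\{s=\boldone_d\}\hookrightarrow X$ is a clean shortcut not used in the paper.

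Part~(3) is where your sketch is thinnest. The expression ``$[M(x)]^T$'' is not well-defined, since $\CC(x)$ carries no $A$-grading, and the claim that ``any proper $D(x)$-submodule may be chosen $T$-equivariant'' is asserted without justification. The paper instead argues by dichotomy: if some $\ann_{\CC[E]}(\gamma)$ is non-maximal, an explicit $A$-graded filtration~\eqref{eq:filter M} of $M$ localizes to show that both $M(x)$ and $M_0(z)$ are reducible; in the remaining single-$\beta$ case, the tensor decomposition together with the simplicity of the one-dimensional $\CC(t)$-vector space $\CC(t)\otimes_{\CC[t]}H(\beta)$ gives the biconditional directly. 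Your iterated-extension picture is compatible with this, but you should replace the equivariance language with this explicit dichotomy.
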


\begin{proof}
Since $\tilde A$ is the identity map in this lemma, we suppress
writing $\Upsilon_C^{\tilde A}$ in the remainder, so that $[M]^T$ represents
$\Delta^{\tilde A}_B(M)$. With a slight abuse of notation, write
$T=\sT$ and $X=T\times Z$.  

Recall that $\AEmods{T\times Z}$, $\Ahol{T\times Z}$, holonomic $A$-graded 
modules on $T\times Z$ with $A$-degree zero morphisms, and regular
holonomic $A$-graded modules on $T\times Z$ with $A$-degree zero
morphisms are all Abelian categories that are closed under
extensions. Thus, in order to show~\eqref{item:L-hol}
and~\eqref{item:regular-hol} of Theorem~\ref{thm:invariants-are-good}
in this case, it is enough by Theorem~\ref{thm:DsT} to consider a
module $N$ for which there exists a $\beta\in\CC^d$ such that for each
nonzero homogeneous element $\gamma\in N$, $\ann_{\CC[E]}(\gamma) 
  = \<t\del_t - \beta - \deg(\gamma)\>$. 
In other words, each $t_i\dt{i}$ acts on $N$ as a multiplication by some scalar 
$\beta_i$.  
Then $D_Z\cdot \gamma =D_Z[E]\cdot \gamma = [D_{T\times Z}]^T\cdot\gamma$
for all $\gamma\in N$.

Since $[N]^T = N_0$ is a finitely generated $[D_{T\times Z}]^T$-module
and $[D_{T\times Z}]^T = D_Z[t\del_t]$, we see that $[N]^T$  
is a finitely generated $D_Z$-module. 
To continue, note that since $N =\bigoplus_{\alpha\in\ZZ A}N_0\cdot t^\alpha$ 
and $D_T = \bigoplus_{\alpha\in\ZZ A} \CC[t\del_t]\cdot t^\alpha$, 
we have that, as $D_{T\times Z}$-modules,
\begin{align}
\label{eq:ann-max-pres}
N = \frac{D_T}{D_T\cdot \<t\del_t - \beta\>} 
	\otimes_{\CC}[N]^T,
\end{align}
with $D_{T\times Z} = D_T\otimes_\CC D_Z$, so that $D_T$ is acting on 
$D_T/(D_T\cdot \<t\del_t - \beta\>)$ and $D_Z$ is acting on $[N]^T$.

For~\eqref{item:L-hol} and~\eqref{item:regular-hol}, 
let $F = (\boldzero_n,\boldone_n) = (\boldzero_d,\boldzero_m,\boldone_d,\boldone_m)$, where $F_T = (\boldzero_d,\boldone_d)$ and $F_Z = (\boldzero_m,\boldone_m)$. 
Then $\left[ \gr^F(D_{T\times Z}) \right]^T 
= \gr^{F_T}( [D_T]^T ) \otimes_\CC \gr^{F_Z}(D_Z)$, 
and~\eqref{eq:ann-max-pres} implies that 
\begin{align}
\gr^F(N) 
  = \gr^F\left( 
  	\frac{ D_T }{ \<t\del_t - \beta\> } 
	\otimes_\CC [N]^T 
	\right) 
   = \gr^{F_T} \left( 
  	\frac{ D_T }{ \<t\del_t - \beta\> } 
	\right)
	\otimes_\CC \gr^{F_Z}( [N]^T ). 
	\label{eq:TtimesZ irred grL}
\end{align}
The module
\begin{align}
\label{eq:T part of N}
H(\beta) \defeq D_T / \<t\del_t - \beta\>. 
\end{align}
is a regular connection on $T$. It follows from~\eqref{eq:TtimesZ
  irred grL} that $N$ is holonomic if and only if $[N]^T$ is holonomic. Thus,~\eqref{item:L-hol} and~\eqref{item:regular-hol}
hold for $N$, from which the general case of the lemma follows.  

We now consider~\eqref{item:irred-monodromy}, still assuming that for
each homogeneous element $\gamma\in N$, $\ann_{\CC[E]}(\gamma) =
\<t\del_t - \beta - \deg(\gamma)\>$. Set 
\[
N(t,z) \defeq \CC(t,z) \otimes_{\CC[t,z]} N \quad \text{and} \quad 
D_{T\times Z}(t,z) \defeq \CC(t,z) \otimes_{\CC[t,z]} D_{T\times Z}.
\]
From~\eqref{eq:ann-max-pres}, we see that 
\begin{align*}
N(t,z) 
& = 
	\left(\left( 
		\CC(t) \otimes_{\CC[t]} H(\beta)
	\right)
	\otimes_{\CC} 
\left( \CC(z) \otimes_{\CC[z]} [N]^T \right)\right)
\otimes_{\CC(t)\otimes_\CC\CC(z)}\CC(t,z). 
\end{align*}
Since $\CC(t)\otimes_{\CC[t]} H(\beta)$ is an irreducible 
$(\CC(t)\otimes_{\CC[t]}D_T)$-module, irreducibility of $N(t,z)$ is
equivalent to that of $\CC(z)\otimes_{\CC[z]}[N]^T$, 
and~\eqref{item:irred-monodromy} holds for $N$. 

For the general case of~\eqref{item:irred-monodromy}, suppose now 
that $M$ is in $\AEmods{X}$ such that for some nonzero $\gamma \in M$,
$\CC[t\del_t]/\ann_{\CC[t\del_t]}(\gamma)$ is Artinian, but
$\ann_{\CC[t\del_t]}(\gamma)$ is not a maximal ideal. Then $M$ has a
filtration by $A$-graded modules  
\begin{align}
\label{eq:filter M}
0 = M^{(0)} \subsetneq M^{(1)} \subsetneq \cdots \subsetneq M^{(r)} = M, 
\end{align}
where $r>1$ and the $\CC[t\del_t]$-annihilators of nonzero homogeneous
elements $\gamma^{(i)}$ in the successive quotients $M^{(i)}/M^{(i-1)}$ have the form 
$\<t\del_t - \beta^{(i)}-\deg(\gamma^{(i)})\>$ for suitable
$\beta^{(i)}$.  Since $M$ is finitely generated, one may choose such a
finite filtration that works simultaneously for all homogeneous
elements of $M$. In particular, $M$ does not have irreducible monodromy representation, since 
$\CC(t,z)\otimes_{\CC[t,z]} M^{(1)}$ provides a nonzero nontrivial submodule of 
$\CC(t,z)\otimes_{\CC[t,z]} M$.  At the same time, applying the exact functor
$\CC(z)\otimes_{\CC[z]}[-]^T$ to~\eqref{eq:filter M} also shows that
$[M]^T$ has reducible monodromy representation, completing the proof
of~\eqref{item:irred-monodromy}, and thus of Lemma~\ref{lem:TcrossZ-fg}.
\end{proof}

Continuing with the proof of Theorem~\ref{thm:invariants-are-good}, 
by Lemma~\ref{lem:B=C} we are left to consider other choices for $\tilde A$ in
Lemma~\ref{lem:TcrossZ-fg} with $K$ still equal to the identity
matrix, so that $Z=X/\sT\cong(\CC^*)^m$ and $D_Z$ acts on $D_X$ via
$C$. Let $X'\defeq (\CC^*)^d\times (\CC^*)^m$ and  
consider the change of coordinates $\mu_{\tilde A}\colon X'\to X$. The
action of $T$ on $X'$ given by identifying $T$ with $(\CC^*)^d$
agrees with the action of $T$ on $X$ through $\mu_{\tilde A}$. At the
same time, $\mu_{\tilde A}$ identifies $X'/T$ with $X/\sT$, so since $K$ is the identity matrix, $Z=X/\sT$.  

If $M$ is in $\AEmods{X}$, then 
$\Delta^{\tilde A}_B(M)
	=\Upsilon_C^{\tilde A}([\mu_{\tilde A}^*(M)]^T)$, 
with the $D_{(\CC^*)^m}$-action coming from the decomposition $X'
=(\CC^*)^d\times (\CC^*)^m$. This implies
parts~\eqref{item:regular-hol} and~\eqref{item:irred-monodromy} of
Theorem~\ref{thm:invariants-are-good}. For part~\eqref{item:L-hol},
note that the monomial map $\mu_{\tilde A}$ identifies the filtration
on $D_X$ induced by $F$ with the order filtration on $D_{X'}$. 
Thus $M$ is holonomic on $X$ if and only if $\mu_{\tilde A}^*(M)$
is holonomic on $X'$. Now by Lemma~\ref{lem:TcrossZ-fg},
$\mu_{\tilde A}^*(M)$ is holonomic on $X'$ if and only 
if $\Upsilon_{{\tilde A}C}([\mu_{\tilde A}^*(M)]^T)$ is holonomic
on $Z =(\CC^*)^m$, completing the proof. 
\end{proof}

We conclude this section with an example to illustrate how a module
$M$ with irreducible monodromy representation could have a reducible image
under $\Delta^{\tilde A}_B$.  

\begin{example}
\label{ex:exp}
Consider the case that $\tilde A$ is the $2\times 2$ identity matrix,
$B=[0\ 2]^t$, $K=[2]$, with
$M=D_X/D_X\cdot\<x_1\dx{1}, \dx{2}-1\>
	=D_X\bullet \exp(x_2)$. 
Here, 
\[
\Delta^{\tilde A}_B(M)
	=\frac{D_Z\oplus D_Z \cdot z^{1/2}}{D_Z \cdot 
	\<(2z\del_z,-1\cdot z^{1/2}),(-z,(2z\del_z-1)\cdot z^{1/2})\>}
\]
is isomorphic to 
$D_Z/D_Z\cdot\<4z\del_z^2+2\del_z-1\>$, 
whose solution space is spanned by 
$\exp(\sqrt z)$ and $\exp(-\sqrt z)$. 
The fundamental group $\ZZ$ of the regular locus of 
$\Delta^{\tilde A}_B(M)$ acts on the solution space by switching the 
two distinguished generators. In particular, 
$f=\exp(\sqrt z)+\exp(-\sqrt z)$ generates a monodromy invariant subspace. As $f$ is holomorphic on $z\not =0,\infty$, it satisfies the
first order equation $(\del_z\cdot\frac{1}{f})\bullet f=0$. 
\endrk
\end{example}

\subsection{Relationship to Descent}
\label{subsec:descent}

The functor $\Delta^{\tilde A}_B$ defined above was
constructed in an attempt to fit the transformation of
Gelfand--Kapranov--Zelevinsky 
hypergeometric modules to hypergeometric modules of Horn type. On the other hand, there exist other
ways of studying equivariant $D$-modules, most notably \emph{descent from $X$}. 
In this section, we explain how $\Delta^{\tilde A}_B$ can be viewed as
a generalization of descent.

Let $G$ be a complex linear algebraic group, and let $X$ be an
algebraic variety with an action of $G$,
\[
\mu\colon G\times X\to X,
\]
such that $\mu(gh,x)=\mu(g,\mu(h,x))$ for all $g,h\in G$ and $x\in
X$. A $\calD_X$-module is weakly $G$-equivariant if there is an
$\calO_G\times D_X$-isomorphism $\phi_M\colon \mu^*(M)\cong
\pi_2^*(M)$ where $\pi_2\colon G\times X\to X$ is the projection.
One calls $M$
an equivariant $D_X$-module if the morphism $\alpha_M$ is
$D_G\otimes D_X$-linear; this is a serious restriction.

\begin{example}
If $G=\CC^*$ and $X=\CC$, with $\mu$ the natural product, then the
modules $D_X/D_X\cdot\<x\del_x-\beta\>$ are equivariant $\calO_X$-modules that
are also $D_X$-modules. However, the only equivariant $D_X$-modules of
this form are $D_X/\<x\del_x-k\>$ where $k\in\ZZ$. Indeed, the Lie algebra of $G$ is spanned
by $x\del_x$. If
\[
M=\bigoplus_{\chi_\alpha\in \hom_{{\rm gps}}(G,\CC^*)}
M_\alpha
\] 
is the decomposition into character spaces and $m\in
M_\alpha$, then $D_X$-linearity implies that
\begin{eqnarray*}
x\del_x\cdot m &=& x\del_x\cdot (1\otimes_\mu m)\\
               &=&\phi_M^{-1}\left(x\del_x\cdot (1\otimes_\CC m)\right)\\
               &=&\phi_M^{-1}\left(x\del_x\bullet(1)\otimes_\CC m + 1\otimes_\CC\chi_\alpha(x\del_x)m\right) \\
               &=&\alpha m,
\end{eqnarray*}
so that the Lie algebra of $G$ (as subset of $D_G\otimes D_X$) acts
via characters on $M$. (The morphism $\chi_\alpha$ induces the one on Lie
algebras used here.)
\endrk
\end{example}

In \cite[Thm VII.12.11]{borel}, it is shown that for group actions
with finitely many orbits, equivariant
$D_X$-modules are regular, underscoring their special
structure.

Now, \cite[Part II]{hotta} discusses $L$-twistedly equivariant
$D_X$-modules. They arise from a rank one connection on $G$, replacing
the trivial local system in the usual equivariant case: one requires
an isomorphism
\[
\phi_L\colon \mu^*(M)\cong L\otimes_\CC M
\]
of $D_G\otimes D_X$-modules on $G\times X$, with appropriate
associativity conditions which boil down to $\mu^*(L)\cong
L\otimes_\CC L$ on $G\times G$. This corresponds to allowing more
flexibility in the action of the Lie algebra of $G$ on $M$. In the
example above, the local connection $(x\del_x-\beta)(f)=0$ on $G$
induces the twistedly equivariant $D_X$-module
$D_X/D_X\cdot \<x\del_x-\beta\>$.

Suppose $P$ is a $G$-bundle over $X$ with structure morphism
$\pi\colon P\to X$. Then the category of $G$-equivariant $D_P$-modules
is equivalent to the category of $D_X$-modules, see Theorem 1.3 in~\cite{raskin}.
The correspondence is induced by taking invariant sections (descent
from $P$ to $X$) and by taking inverse image (from $X$ to $P$).

If in the context of Definition 3.4 we have $X=T\times Z$, then the
functor $\Delta^{\tilde A}_B$ is just descent. In this sense,
$\Delta^{\tilde A}_B$ generalizes descent as follows:
\begin{enumerate}
\vspace{-2mm}
\item it utilizes some additional information (the
matrices $A$, $B$); \label{item:addtl}
\item it provides a descent-like functor for all
twistedly $G$-equivariant $D_X$-modules; \label{item:twisted}
\item it permits the 
adding-on of a finite quotient map $X/T\to Z$. \label{item:finite}
\end{enumerate}

The extra information in~\eqref{item:addtl} is crucial since it distinguishes (the
results of $\Delta^{\tilde A}_B$ on) $D_{\bar X}$-modules that become the same
on $X$. Moreover, the functor $\Delta^{\tilde A}_B$ does not treat the
finite map in~\eqref{item:finite} as part of the 
$T$-action (i.e., the effect is not descent under $X\to Z$; the finite
part of $\Delta^{\tilde A}_B$ does not correspond to taking invariants).

\section{Torus invariants and \texorpdfstring{$D_\barX$}{DbarX}-modules}
\label{sec:Pi}

Let $i\colon X\hookrightarrow \barX$ and $j\colon Z\hookrightarrow
\barZ$ be the natural inclusions. In Convention~\ref{conv:BtildeACK},
we fixed a splitting of $X$, encoded by an $n\times n$ matrix $\tilde
A$, and a Gale dual $B$ of $A$.  
In Theorem~\ref{thm:invariants-are-good}, we examined the functor  
$\Delta^{\tilde A}_B\colon 
	\AEmods{X}\to\Dmods{Z}$, 
which is given by taking torus invariants and adjusting module
structure (see Definition~\ref{def:Delta}).  

\begin{definition}
\label{def:Pi}
The main subject of study in this article is the functor $\Pi^{\tilde A}_B$ on 
$\AEmods{\barX}$ given by 
\[
\Pi^{\tilde A}_B 
\defeq j_+ \circ \Delta^{\tilde A}_B \circ i^*,
\] 
where $j_+$ is the (non-derived) $D$-module direct image and $i^*$ is the $D$-module
inverse image.   
\end{definition}

In this section, we extend our results about $\Delta^{\tilde A}_B$
from \S\ref{sec:invariants} to statements for $\Pi^{\tilde A}_B$. In
\S\ref{sec:E-beta}, we provide further consequences when we further
restrict the source category of $\Pi^{\tilde A}_B$.  

Note that the functor $\Pi^{\tilde A}_B\colon \AEmods{\barX}\to\Dmods{\barZ}$ is exact. 
Indeed, since $i\colon X\to \barX$ is a localization, $i^*$ is exact. Note
also that $i^*$ sends $\AEmods{\barX}$ to $\AEmods{X}$, the 
restricted source category for $\Delta^{\tilde A}_B$.  
Since $\Delta^{\tilde A}_B$ essentially 
takes the $A$-degree $0$ part of a module, it is exact. We thus obtain the
result from the exactness of $j_+$, which holds because $j$ is an open embedding of affine varieties.  

\begin{theorem}
\label{thm:transfer theorem}
If $M$ is in $\AEmods{\barX}$, then $\Pi^{\tilde A}_B(M)$ is a
finitely generated $D_\barZ$-module. In particular, $\Pi^{\tilde
  A}_B\colon \AEmods{\barX}\to\Dmods{\barZ}$, and the following statements hold.  
\begin{enumerate}
\vspace{-2mm}
\item \label{item:L-hol barX}
  	If $M$ is holonomic, 
  	then $\Pi^{\tilde A}_B(M)$ is holonomic.
\item \label{item:regular-hol barX}
	If $M$ in $\Ahol{\barX}$ is regular holonomic, 
	then $\Pi^{\tilde A}_B(M)$ is regular holonomic.
\item \label{item:irred-monodromy barX}
	If $M$ has reducible monodromy representation, then so does 
	$\Pi^{\tilde A}_B(M)$.
  	If the columns of $B$ span $\ker_{\ZZ}(A)$ as a lattice, then
        the converse also holds.
\end{enumerate}
\end{theorem}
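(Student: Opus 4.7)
The plan is to factor $\Pi^{\tilde A}_B = j_+ \circ \Delta^{\tilde A}_B \circ i^*$ and propagate each listed property through the three constituent functors, using Theorem~\ref{thm:invariants-are-good} for the middle step.

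First I would verify that the open-immersion inverse image $i^*$ sends $\AEmods{\barX}$ into $\AEmods{X}$ and preserves $L$-holonomicity, regular holonomicity, and reducibility of monodromy. The defining condition of $\AEmods{-}$ involves only the $D_\sT$-module generated by a homogeneous element, and since $\sT \subset X$ this condition is unaffected by restriction. Inverse image along the flat open immersion $i$ compatibly restricts $L$-characteristic varieties (preserving $L$-holonomicity), preserves regular holonomicity by the standard smooth-pullback results, and does not alter the generic fiber because $\CC(\barX) = \CC(X)$. Theorem~\ref{thm:invariants-are-good} then transfers the corresponding properties to $\Delta^{\tilde A}_B(i^* M)$ over $D_Z$.

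Next I would analyze the direct image $j_+ : \Dmods{Z} \to \Dmods{\barZ}$ along the affine open immersion $j$, for which $j_+$ coincides with $j_*$ with no derived corrections. For part~\eqref{item:L-hol barX}, the $L'$-characteristic variety of $j_+ N$ lies inside the Zariski closure of $\charVar^{L'}(N)$ in $T^*\barZ$ together with conormal contributions over subvarieties of $\barZ \minus Z$; by Smith's refinement of Bernstein's inequality every component has dimension at least $m$, while $L'$-holonomicity of $N$ forces dimension at most $m$, so $L'$-holonomicity is preserved. Part~\eqref{item:regular-hol barX} follows from the classical fact that regular holonomicity is preserved under direct image along open immersions. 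For part~\eqref{item:irred-monodromy barX}, the equalities $\CC(\barZ) = \CC(Z)$ and $\CC(\barX) = \CC(X)$ identify the generic modules across both $i^*$ and $j_+$, so reducibility transfers transparently; combined with Theorem~\ref{thm:invariants-are-good}.\eqref{item:irred-monodromy} this yields both the forward implication and, under the lattice-span hypothesis on $B$, the converse.

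The principal obstacle is the finite-generation claim, since $j_+$ need not preserve finite generation of arbitrary coherent $D_Z$-modules. My strategy is to leverage the $\AEmods{-}$ hypothesis together with Theorem~\ref{thm:DsT}: each homogeneous cyclic subquotient of $M$ has an Artinian $\CC[E]/\ann_{\CC[E]}(\gamma)$, which I expect to force $\Delta^{\tilde A}_B(i^* M)$ to admit a finite filtration whose successive quotients push forward along $j$ to holonomic, hence coherent, $D_\barZ$-modules. Finite generation of $\Pi^{\tilde A}_B(M)$ then follows from closure of $\Dmods{\barZ}$ under extensions, as $D_\barZ$ is Noetherian. Once finite generation is secured, the $L'$-holonomicity, regular-holonomicity, and reducibility statements for $\Pi^{\tilde A}_B(M)$ are immediate from the propagation analysis above.
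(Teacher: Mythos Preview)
Your overall strategy matches the paper's three-sentence proof exactly: factor $\Pi^{\tilde A}_B = j_+ \circ \Delta^{\tilde A}_B \circ i^*$, use $\sT\subseteq X$ to see that $i^*$ carries $\AEmods{\barX}$ into $\AEmods{X}$, invoke Theorem~\ref{thm:invariants-are-good} for the middle functor, and then appeal to standard compatibilities of $i^*$ and $j_+$ with $L$-holonomicity, regular holonomicity, and monodromy reducibility. The paper disposes of finite generation by asserting, without further argument, that ``the direct image functor $j_+$ will preserve finite generation.''

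Your more careful treatment of this last point has a genuine gap. Membership in $\AEmods{\barX}$ only forces each $\CC[E]/\ann_{\CC[E]}(\gamma)$ to be Artinian; it imposes no holonomicity on $\Delta^{\tilde A}_B(i^*M)$ or on the subquotients of any filtration built from the $\CC[E]$-annihilators, so you cannot conclude that those subquotients push forward under $j_+$ to holonomic (hence coherent) $D_\barZ$-modules. Concretely, $M = D_\barX/\langle E-\beta\rangle$ lies in $\Tirr(\barX,\beta)\subseteq\AEmods{\barX}$, any such filtration is already trivial, and Proposition~\ref{prop:dkap ker} gives $\Delta^{\tilde A}_B(i^*M)\cong D_Z$, which is not holonomic; and $D_Z$ is \emph{not} finitely generated as a left $D_\barZ$-module (via the standard anti-involution this is equivalent to right finite generation, which fails because any finite right-generating set lies in some $z^{-N}D_\barZ\subsetneq D_Z$). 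Thus neither your filtration argument nor the paper's bare assertion secures finite generation for arbitrary $M\in\AEmods{\barX}$. The difficulty does vanish under the hypotheses of~\eqref{item:L-hol barX} or~\eqref{item:regular-hol barX}: then $\Delta^{\tilde A}_B(i^*M)$ is holonomic by Theorem~\ref{thm:invariants-are-good}, and $j_+$ of a holonomic module is holonomic, hence coherent.
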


\begin{proof}
Since $\sT\subseteq X\subseteq \barX$, 
with input from $\AEmods{\barX}$, $i^*$ returns objects
in $\AEmods{X}$. Therefore by Theorem~\ref{thm:invariants-are-good}
and the fact that the direct image functor $j_+$ will preserve finite
generation, $\Pi^{\tilde A}_B(M)$ is a finitely generated $D_Z$-module
when $M$ is in $\AEmods{\barX}$.  
The properties considered in~\eqref{item:L-hol
  barX},~\eqref{item:regular-hol barX},
and~\eqref{item:irred-monodromy barX} are compatible with the inverse
and direct image functors in the directions of the implications
stated, so the proof reduces to Theorem~\ref{thm:invariants-are-good}.  
\end{proof}

The reverse implications in the first two items of
Theorem~\ref{thm:transfer theorem} do not hold, because the restriction
$i^*$ of a module which is not (regular)  
holonomic might be
(regular) holonomic. In Theorem~\ref{thm:binomial quotients}, we show that
these converses do hold for binomial $D_\barX$-modules.

We conclude this section with a description of the characteristic
varieties and singular loci of $\Pi^{\tilde A}_B(M)$ in terms of those
of $\Delta^{\tilde A}_B(M)$. In \S\ref{subsec:E-beta:charVar Sing}, we
will obtain more refined descriptions of these objects under
additional assumptions. An application of the following
Proposition~\ref{prop:characteristic variety Pi} to the singular locus
of the image under $\Pi^{\tilde A}_B$ of the $A$-hypergeometric system
$H_A(\beta)$ can be found in \S\ref{sec:binomial Sing}.  

\begin{proposition}
\label{prop:characteristic variety Pi}
Let $M$ be a nonzero regular holonomic module in $\AEmods{\barX}$, and
recall that $i\colon X\hookrightarrow\barX$ is the natural inclusion.  
Then the characteristic variety of $\Pi^{\tilde A}_B(M)$ is the union of the 
characteristic variety of $\Delta^{\tilde A}_B(D_X \otimes_{D_\barX} M)$ 
with the coordinate hyperplane conormals in $\barZ$: 
\[
\charVar(\Pi^{\tilde A}_B(M)) 
	\, = \, 
		\charVar(\Delta^{\tilde A}_B(D_X 
		\otimes_{D_\barX} M)) 
		\cup 
		\Var(z_1\zeta_1,\dots,z_m\zeta_m) 
	\ \ \subseteq \ T^*\barZ.
\]
Further, the singular locus of $\Pi^{\tilde A}_B(M)$ is the union of
the singular locus of $\Delta^{\tilde A}_B(D_X \otimes_{D_\barX} M)$
with the coordinate hyperplanes in $\barZ$:  
\[
\Sing{\Pi^{\tilde A}_B(M)}
	\, = \, 
		\Sing{\Delta^{\tilde A}_B(D_X 
		\otimes_{D_\barX} M)} 
		\cup 
		\Var(z_1z_2\cdots z_m)
	\ \ \subseteq \ \barZ.
\]
\end{proposition}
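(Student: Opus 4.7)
The plan is to reduce to a general statement about direct images along the affine open immersion $j\colon Z\hookrightarrow\barZ$, whose complement is the normal crossings divisor $\Var(z_1\cdots z_m)$. Set $N := \Delta^{\tilde A}_B(i^*M)$, which is a regular holonomic $D_Z$-module by Theorem~\ref{thm:invariants-are-good}.\eqref{item:regular-hol}, using that $i^*$ preserves regular holonomicity and lands in $\AEmods{X}$. Then $\Pi^{\tilde A}_B(M) = j_+N = j_*N$, the latter equality because $j$ is an affine open immersion; as a $D_\barZ$-module this is simply $N$ with the action restricted via $D_\barZ\hookrightarrow D_Z$.

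The heart of the proof is the following claim: for any nonzero regular holonomic $D_Z$-module $N$,
\[
\charVar(j_+ N) \ =\ \overline{\charVar(N)} \ \cup \ \Var(z_1\zeta_1,\dots,z_m\zeta_m) \ \subseteq \ T^*\barZ,
\]
where the first term is the Zariski closure in $T^*\barZ$ of $\charVar(N)\subseteq T^*Z$. The inclusion $\overline{\charVar(N)}\subseteq\charVar(j_+N)$ follows from $j^*j_+N = N$ together with the compatibility of characteristic varieties with restriction to open subsets. For the coordinate-conormal contribution, I would first treat the base case $N=\OO_Z$: writing $j_+\OO_Z=\CC[z_1^{\pm1},\dots,z_m^{\pm1}]$ as the external product of $m$ copies of the one-variable module $\CC[z^{\pm1}]$, and noting that the latter has characteristic variety $\Var(z\zeta)\subseteq T^*\CC$ by an elementary local cohomology computation, the external product formula yields $\charVar(j_+\OO_Z)=\Var(z_1\zeta_1,\dots,z_m\zeta_m)$. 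The general case then follows by d\'evissage along a composition series for $N$, combined with the fact that $j_+N$ is holonomic and its characteristic variety is Lagrangian, so the only possible boundary contributions are conormals to coordinate strata, and each of these is forced to appear because $N$ is nonzero on all of $Z$.

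The singular locus formula follows by projection. Writing $\pi\colon T^*\barZ\to\barZ$, we have $\Sing{P}=\pi(\charVar(P)\minus\Var(\zeta_1,\dots,\zeta_m))$ for any coherent $D_\barZ$-module $P$. At any point of $\Var(z_1\zeta_1,\dots,z_m\zeta_m)$ with some $\zeta_i\neq 0$ we must have $z_i=0$, so the image of this variety minus the zero section is precisely $\Var(z_1\cdots z_m)$. Combined with the projection of $\overline{\charVar(N)}$ away from the zero section, which recovers $\Sing{\Delta^{\tilde A}_B(i^*M)}$ (together possibly with points already absorbed into $\Var(z_1\cdots z_m)$), this yields the second displayed equation.

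The main obstacle is the characteristic variety claim, specifically verifying that \emph{every} coordinate-subspace conormal (not just those of single coordinate hyperplanes) actually appears in $\charVar(j_+N)$. This requires either a careful induction tracking how the Kashiwara--Malgrange $V$-filtration behaves along each boundary hyperplane $z_k=0$, or else bootstrapping from the base case by realizing $j_+N$ as a tensor product of $j_+\OO_Z$ with an appropriate $D_\barZ$-extension of $N$; in either approach, the nonvanishing of $N$ across every coordinate stratum, coupled with regularity, is what propagates the boundary contributions to every such stratum.
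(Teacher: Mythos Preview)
Your reduction to the study of $\charVar(j_+N)$ for $N := \Delta^{\tilde A}_B(i^*M)$ matches the paper's setup exactly, as does your derivation of the singular-locus statement from the characteristic-variety one. The difference is in how the central claim is established: the paper dispatches it in one line by citing \cite[Theorem~3.2]{ginsburg} on characteristic varieties of direct images, while you attempt a from-scratch argument via the base case $N=\OO_Z$ and d\'evissage.

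That attempt has a genuine gap, precisely at the point you yourself flag as the ``main obstacle.'' D\'evissage reduces to simple regular holonomic $N$, and your external-product computation handles $\OO_Z$ and hence any rank-one local system $z^\alpha\OO_Z$, which in fact covers every simple $N$ with $\supp N = Z$ (on a torus these are all rank-one local systems). But a simple regular holonomic $D_Z$-module can be supported on a proper subvariety $Y\subsetneq Z$; then $j_+N$ is supported on $\overline{Y}\subsetneq\barZ$, so its characteristic variety cannot contain the zero section $\Var(\zeta_1,\dots,\zeta_m)$, and the inclusion $\Var(z_1\zeta_1,\dots,z_m\zeta_m)\subseteq\charVar(j_+N)$ fails outright. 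Your phrase ``nonvanishing of $N$ across every coordinate stratum'' does not repair this (the module $N$ lives only on $Z$), and neither of your two suggested fixes is actually carried out; the tensor-product idea in particular does not work as stated, since a projection formula would require already having an extension of $N$ across the boundary. The citation to Ginsburg is doing real work that your sketch does not replace.
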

\begin{proof}
Note that $D_X \otimes_{D_\barX} M =  i^* M$. 
Since $\Pi^{\tilde A}_B(M)$ is the direct image of 
$\Delta^{\tilde A}_B(i^*M)$ along the open embedding 
$j\colon Z\hookrightarrow \barZ$,~\cite[Theorem~3.2]{ginsburg} implies 
that the characteristic variety of $\Pi^{\tilde A}_B(M)$ is the union
of the characteristic variety of $\Delta^{\tilde A}_B(i^*M)$ with the
set of coordinate hyperplane conormals in $\barZ$. 
In addition, saturation and projection commute with taking associated
graded objects, so the final statement holds. 
\end{proof}

\section{Torus invariants for a fixed torus character}
\label{sec:E-beta}

In this section, let $Y$ be $X$ or $\barX$. We now restrict our
attention to certain subcategories of $\AEmods{Y}$ called
$\AEmaxmods{Y}$ and $\Tirr(Y,\beta)$, over which we are able to obtain
more detailed information about the functors $\Delta^{\tilde A}_B$ and
$\Pi^{\tilde A}_B$, still following Convention~\ref{conv:BtildeACK}.  
We provide explicit expressions for these functors, show how they
affect solutions, and, under certain assumptions on $B$, describe some
of their geometric properties.  
In Corollary~\ref{cor:Pi of lattice basis is saturated Horn}, we
explain the relationship between lattice basis binomial
$D_\barX$-modules and saturated Horn $D_\barZ$-modules.   

\begin{definition}
\label{def:Tirred}
Let $\AEmaxmods{Y}$ denote the subcategory of $\AEmods{Y}$ consisting
of modules $M$ such that for each nonzero homogeneous element
$\gamma\in M$, $\ann_{\CC[E]}(\gamma)$ is a maximal ideal in
$\CC[E]$. 
\end{definition}
 
To provide a $D$-module theoretic understanding of the definition of
$\AEmods{Y}$, we note the parallel to
Theorem~\ref{thm:DsT}. Recall that each $A$-graded $D_\sT$-module
$M$ has a filtration of $D_\sT$-modules  
\[
0 = M^{(0)} \subset M^{(1)} \subset \cdots 
	\subset M^{(r)} = M 
\]
such that for each $i$ and each nonzero homogeneous element $\gamma$,  
$M^{(i)}/M^{(i-1)}$ has the form $\<E - \beta^{(i)} -\deg(\gamma) \>$ 
for some $\beta^{(i)}$. 
Thus, an $A$-graded $D_\sT$-module $M$ is irreducible if and only if 
there is such a filtration with $r = 1$. 

In the sequel, we consider subcategories of
$\AEmaxmods{Y}$ given by fixing $\beta$. To this end, let
$\Tirr(Y,\beta)$ denote the subcategory of $\AEmaxmods{Y}$ given by
objects $M$ such that for each nonzero homogeneous element 
$\gamma\in M$,   
$\ann_{\CC[E]}(M) 
	= \< E - \beta - \deg(\gamma)\>$. 
Note that when $\gamma\in D_Y$ is homogeneous, 
$\gamma(E-\beta) 
= (E-\beta-\deg(\gamma))\gamma$. 
Thus, when $M = D_Y/J$ is a cyclic element in $\AEmaxmods{Y}$, it lies in
$\Tirr(Y,\beta)$ precisely when $\<E-\beta\>$ is contained in $J$.  

\begin{example}
If $I\subseteq\CC[\del_x]$ is an $A$-graded ideal, then
$D_\barX/(I+\<E-\beta\>)$ is in $\Tirr(\barX,\beta)$.   
\endrk
\end{example}

\subsection{Explicit expressions for the functors $\Delta^{\tilde A}_B$ and
$\Pi^{\tilde A}_B$}
\label{subsec:E-beta:explicit}

We provide explicit computations of $\Delta^{\tilde A}_B$ and
$\Pi^{\tilde A}_B$ on $\Tirr(X,\beta)$. 
We begin by first relating $[D_X]^T/\<E-\beta\>$ and $D_Z$ via
$D_{X/\sT}$.

\begin{convention}
\label{conv:kappa beta}
For each $\beta\in\CC^d$, fix a vector $\kappa(\beta) = \kappa \in \CC^n$ 
such that $A\kappa = \beta$. 
In addition, recall that the $m\times m$ matrix $K$ from
Convention~\ref{conv:BtildeACK} has full rank, and therefore its Smith
normal form is an integer diagonal matrix with nonzero diagonal
entries called the \emph{elementary divisors} of $K$. Let $\varkappa \defeq
(\varkappa_1,\dots,\varkappa_m)$ denote the elementary divisors of $K$. 
\endrk
\end{convention}

\begin{proposition}
\label{prop:dkap ker}
Let $B$ be a Gale dual of $A$ whose columns span
$\ker_{\ZZ}(A)$ as a lattice,  
so that $[D_X]^T$ is $\CC$-spanned by monomials $x^{Bv} \theta^u$, 
where $v\in \ZZ^m$ and $u \in \NN^n$. Denote the rows of $B$ by
$B_1,\dots,B_n$. For $v\in\ZZ^m$ and $u \in \NN^n$, define   
\begin{align}\label{eq:dkap}
\dkap( x^{Bv} \theta^u ) 
	\defeq z^v \prod_{i=1}^n (B_i\cdot\eta + \kappa_i)^{u_i}.
\end{align}
This extends linearly to a surjective homomorphism of $\CC$-algebras
\[
\dkap: [ D_X ]^T \longrightarrow D_Z,
\]
whose kernel is the (two-sided) $[D_X]^T$-ideal generated by the
sequence $E-\beta$. 
\end{proposition}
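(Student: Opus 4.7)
The plan is to establish Proposition~\ref{prop:dkap ker} in four steps: (a) identify a $\CC$-basis of $[D_X]^T$ on which $\dkap$ is defined, (b) verify $\dkap$ is a $\CC$-algebra homomorphism, (c) establish surjectivity, and (d) compute the kernel. For step (a), the commutation $[\theta_i,x_j]=\delta_{ij}x_j$ shows that $D_X$ has $\CC$-basis $\{x^w\theta^u : w\in\ZZ^n,\, u\in\NN^n\}$. Since $x^w\theta^u$ carries $A$-weight $Aw$, the $T$-invariant monomials are those with $w\in\ker_\ZZ A$; by the hypothesis that $\ZZ B=\ker_\ZZ A$, every such $w$ equals $Bv$ for a unique $v\in\ZZ^m$, giving the basis $\{x^{Bv}\theta^u\}$ of $[D_X]^T$ on which $\dkap$ is defined by~\eqref{eq:dkap}.

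For step (b), the slickest route is to characterize $\dkap(P)$ conceptually via $P\big(x^\kappa f(x^B)\big) = x^\kappa\,\dkap(P)(f)(x^B)$, which is manifestly multiplicative in $P$. The chain-rule computation $\dx{i}(f(x^B)) = x_i^{-1}\sum_k b_{ik}(\dz{k}f)(x^B)$ yields $\theta_i(x^\kappa f(x^B)) = x^\kappa(\kappa_i + B_i\cdot\eta)(f)(x^B)$, while $x^{Bv}(x^\kappa f(x^B)) = x^\kappa z^v f(x^B)$; matching these against~\eqref{eq:dkap} on generators determines $\dkap$ as asserted. A direct alternative is to verify that~\eqref{eq:dkap} respects the one nontrivial relation $\theta_i x^{Bv} = x^{Bv}(\theta_i + (Bv)_i)$, using the parallel relation $(B_i\cdot\eta)z^v = z^v(B_i\cdot\eta + (Bv)_i)$ in $D_Z$. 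Surjectivity (step (c)) is then immediate: $\image\dkap$ contains $z^v$ for all $v\in\ZZ^m$ and each affine form $B_i\cdot\eta + \kappa_i$; since $B$ has rank $m$, these span $\CC\eta_1+\cdots+\CC\eta_m$, hence all of $D_Z$.

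The substance of the proof is step (d). One direction follows from $AB=0$ and $A\kappa=\beta$: we compute $\dkap(E_j-\beta_j) = (AB)_j\cdot\eta + (A\kappa)_j - \beta_j = 0$, and $AB=0$ also forces each $E_j$ to commute with every $x^{Bv}$ (hence with all of $[D_X]^T$), so the two-sided ideal coincides with the left one and lies in $\ker\dkap$. For the reverse containment, I would change basis in $\CC[\theta]$ via $\tilde\theta := \tilde A\theta$, so that $E=(\tilde\theta_1,\dots,\tilde\theta_d)$ and $\CC[\theta]/\<E-\beta\>$ becomes a polynomial ring in $\tilde\theta_{d+1},\dots,\tilde\theta_n$. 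Every $P\in[D_X]^T$ therefore reduces modulo $\<E-\beta\>$ to a unique sum $\sum_v x^{Bv} q_v(\tilde\theta_{d+1},\dots,\tilde\theta_n)$. Applying $\dkap$ and using the identity $A^\perp B = K$ (from $\tilde A\tilde C=I_n$ and $B=CK$, since $A^\perp C=I_m$), I obtain $\dkap(\tilde\theta_{d+k}) = K_k\cdot\eta + (A^\perp)_k\cdot\kappa$, where $K_k$ is the $k$-th row of $K$. The hypothesis $\ZZ B=\ker_\ZZ A$ is equivalent to $K\in\Gl(m,\ZZ)$, so the $m$ linear forms $K_k\cdot\eta$ are $\CC$-linearly independent. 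Consequently $\{z^v\prod_k(K_k\cdot\eta + (A^\perp)_k\cdot\kappa)^{u_k}\}$ is a $\CC$-basis of $D_Z$, and $\dkap(P)=0$ forces every $q_v$ to vanish, placing $P$ in $[D_X]^T\<E-\beta\>$. The main obstacle I anticipate is the canonical-form bookkeeping in this last step; everything there hinges on invertibility of $K$ over $\ZZ$, which is exactly what the lattice hypothesis on $B$ provides.
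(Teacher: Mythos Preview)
Your proof is correct. The overall structure matches the paper's, but your kernel computation in step (d) takes a genuinely different route. The paper argues by evaluation: writing $F=\sum_i x^{Bv_i}p_i(\theta)\in\ker\dkap$ and applying the operator $\dkap(F)=\sum_i z^{v_i}p_i(B\eta+\kappa)$ to each monomial $z^\mu$ forces $p_i(B\mu+\kappa)=0$ for all $\mu\in\ZZ^m$; hence each $p_i$ vanishes on the Zariski closure $\kappa+\ker_\CC(A)$, and the (linear) Nullstellensatz places $p_i(\theta)$ in $\CC[\theta]\cdot\<E-\beta\>$. You instead perform the explicit change of variables $\tilde\theta=\tilde A\theta$, identify $[D_X]^T/\<E-\beta\>$ with $\bigoplus_v x^{Bv}\CC[\tilde\theta_{d+1},\dots,\tilde\theta_n]$, and use $A^\perp B=K\in\Gl(m,\ZZ)$ to see that $\dkap$ carries the basis $\{x^{Bv}\prod_k\tilde\theta_{d+k}^{u_k}\}$ bijectively onto the basis $\{z^v\prod_k(K_k\cdot\eta+(A^\perp\kappa)_k)^{u_k}\}$ of $D_Z$. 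Your argument is more constructive and makes the isomorphism $[D_X]^T/\<E-\beta\>\cong D_Z$ fully explicit in a way that anticipates Corollary~\ref{cor:dkap in general}; the paper's argument is shorter and avoids invoking the auxiliary matrices $\tilde A$ and $K$ at this stage. Steps (a)--(c) are essentially the same as the paper's, and your functional characterization $P\bullet(x^\kappa f(x^B))=x^\kappa\,(\dkap(P)\bullet f)(x^B)$ is a pleasant bonus that makes multiplicativity transparent.
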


We will address the implications of different choices for $\kappa$ later, in Remark~\ref{rem:differentKappas}. 

\begin{corollary}
\label{cor:dkap in general}
Let $B$ be any Gale dual of $A$, and let $\kappa$ and
$\varkappa$ be as in Convention~\ref{conv:kappa beta}, and set
$\varepsilon_C \defeq \sum_{i=1}^m c_i$, where $c_i$ is the $i$th column of $C$. Then there is an isomorphism,
denoted $\overline{\delta}_{B,\kappa}$, given by the composition of  
\[
\delta_{C,\kappa+\varepsilon_C}\colon 
\frac{[ D_X ]^T}{\<E-\beta\>} \stackrel{\sim}{\longrightarrow} 
D_Z 
\]
and the isomorphism induced by $\mu_K$: 
\begin{equation}
\label{eqn:iso of rings}
D_Z 
\ \cong \ 
\bigoplus_{0\leq k<\varkappa}D_Z\cdot z^{k/\varkappa},
\end{equation}
where the comparison $k<\varkappa$ is  component-wise and $k/\varkappa$ 
is the vector
$(k_1/\varkappa_1,\ldots,k_m/\varkappa_m)$. 
\end{corollary}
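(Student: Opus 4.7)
The approach is to factor the desired isomorphism $\overline{\delta}_{B,\kappa}$ through the pair of maps $X\to X/\sT\xrightarrow{\mu_K}Z$ coming from the decomposition $B=CK$ in Convention~\ref{conv:BtildeACK}: first invoke Proposition~\ref{prop:dkap ker} applied to $C$ in place of $B$, then decompose the resulting $D_{X/\sT}$ under the finite monomial cover $\mu_K$.

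To begin, I would verify the hypotheses of Proposition~\ref{prop:dkap ker} for $C$: its columns span $\ker_\ZZ(A)$ as a lattice because $\tilde C = \tilde A^{-1}\in\Gl(n,\ZZ)$, and $A(\kappa+\varepsilon_C) = A\kappa = \beta$ since $AC=0$. The proposition then furnishes a $\CC$-algebra isomorphism
\[
\delta_{C,\kappa+\varepsilon_C}\colon [D_X]^T/\<E-\beta\>\ \xrightarrow{\ \sim\ }\ D_{X/\sT},
\]
the target being the ring of differential operators on $X/\sT$ (identified with the ``$D_Z$'' of Proposition~\ref{prop:dkap ker} since the $K$-matrix attached to $C$ is the identity). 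Writing $y$ for coordinates on $X/\sT$ and $\lambda=(\lambda_1,\dots,\lambda_m)$ for the corresponding Euler operators, this map sends $\theta_i\mapsto C_i\cdot\lambda+\kappa_i+(\varepsilon_C)_i$.

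Next I would use Smith normal form to write $K=P\Sigma Q$ with $P,Q\in\Gl(m,\ZZ)$ and $\Sigma=\mathrm{diag}(\varkappa_1,\dots,\varkappa_m)$. Since $P,Q$ induce invertible monomial automorphisms of $D_Z$ and $D_{X/\sT}$ respectively, I may reduce to the diagonal case $K=\Sigma$, so that $\mu_K$ is the Kummer cover $y_i\mapsto y_i^{\varkappa_i}=z_i$ of degree $\prod\varkappa_i$. Then $\CC[y^{\pm 1}]=\bigoplus_{0\le k<\varkappa}\CC[z^{\pm 1}]\cdot z^{k/\varkappa}$, where $z^{k/\varkappa}$ is a shorthand for $y^k$, and because $\mu_K$ is \'etale on tori this lifts to a decomposition
\[
D_{X/\sT}\ \cong\ \bigoplus_{0\le k<\varkappa}D_Z\cdot z^{k/\varkappa}
\]
of left $D_Z$-modules, with $\eta_j$ acting on $z^{k/\varkappa}$ by the scalar $k_j/\varkappa_j$. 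Composing $\delta_{C,\kappa+\varepsilon_C}$ with this decomposition then defines $\overline{\delta}_{B,\kappa}$, which is bijective because both constituents are.

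The main obstacle is ensuring that the decomposition of the Kummer cover at the level of structure sheaves genuinely lifts to the noncommutative rings of differential operators, as a decomposition of left $D_Z$-modules. This reduces to the Leibniz computation $\lambda_j\cdot(f\cdot z^{k/\varkappa}) = (\varkappa_j\eta_j + k_j)f\cdot z^{k/\varkappa}$ for $f\in D_Z$, which simultaneously shows each summand $D_Z\cdot z^{k/\varkappa}$ is $D_Z$-stable and identifies its module structure as a rank-one integrable connection. Tracking $\theta_i$ through this composition and using $\lambda = K\eta$ under pullback $\mu_K^*$ then recovers a formula of the same shape as $\delta_{B,\kappa}(\theta_i) = B_i\cdot\eta+\kappa_i$ from Proposition~\ref{prop:dkap ker}, which makes transparent that the shift $\kappa+\varepsilon_C$ is a compatible normalization, not merely an arbitrary choice of $\kappa'$ with $A\kappa'=\beta$.
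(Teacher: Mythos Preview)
Your proposal is correct and follows essentially the same route as the paper: invoke Proposition~\ref{prop:dkap ker} for $C$ (whose columns span $\ker_\ZZ(A)$ since $\tilde C\in\Gl(n,\ZZ)$) to obtain $\delta_{C,\kappa+\varepsilon_C}$, then reduce the analysis of $\mu_K$ to the diagonal case via Smith normal form and read off the $D_Z$-module decomposition of the Kummer cover. Your additional verification that $A\varepsilon_C=0$ and your Leibniz computation for the action of $\lambda_j$ on the summands $D_Z\cdot z^{k/\varkappa}$ make explicit steps the paper leaves to the reader.
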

\begin{proof}[Proof of Corollary~\ref{cor:dkap in general}] 
The map $\delta_{C,\kappa+\varepsilon_C}$ is an isomorphism by
Proposition~\ref{prop:dkap ker}, so it is enough to
understand~\eqref{eqn:iso of rings}. First note that by identifying $y_i$ with $z_i^{1/\varkappa_i}$, 
\[
\bigoplus_{0\leq k<\varkappa}D_Z\cdot z^{k/\varkappa}
\ \cong \ 
\frac{D_Z[y_1,\dots,y_m]}{\<y_i^{\varkappa_i}-z_i \mid i= 1,\dots,m\>}. 
\]
By Convention~\ref{conv:kappa beta}, there are matrices $P, Q\in\Gl(m,\ZZ)$ such that $P K Q$ is the
diagonal matrix whose diagonal entries are the components of $\varkappa$.
The maps $\mu_P$ and
$\mu_Q$ both induce isomorphisms of $D_Z$; for the matrix $P$, whose 
columns are denoted by $p_1,\dots,p_m$, this isomorphism is given by $z_i
\mapsto z^{p_i}$ and $\eta_i \mapsto \sum_{j=1}^m
\overline{p}_{ij} \eta_j$, where $P^{-1} = [\overline{p}_{ij}]$.

Since $P$ and $Q$ induce isomorphisms, we may assume that $K$ is
diagonal with diagonal entries $\varkappa_1,\dots,\varkappa_m$. In
this case, the ring homomorphism $D_Z \to
D_Z[y_1,\dots,y_m]/\<y_i^{\varkappa_i}-z_i \mid i=1,\dots m\>$ 
by $z_i \mapsto y_i$ and $\eta_i \mapsto \varkappa_i \eta_i$ is clearly an isomorphism.
\end{proof}

\begin{proof}[Proof of Proposition~\ref{prop:dkap ker}]
If $Bv = Bv'$, then $v=v'$ because $B$ has full rank. Moreover, since
the columns of $B$ span $\ker_{\ZZ}(A)$ as a lattice, the elements
$x^{Bv}\theta^u$ form a basis of $[D_X]^T$ as a $\CC$-vector
space. Thus $\dkap$ is well-defined.  

For $\dkap$ to be a ring homomorphism, it is enough to show that 
$\dkap( \theta_i^k x^{Bv}) = \dkap(\theta_i^k)\dkap(x^{Bv})$. 
This follows from two key identities that hold for any $1\leq i\leq n$
and $k\in\ZZ$:  
\begin{align*}
\theta_i^k x^{Bv} 
= x^{Bv}(\theta_i + B_i\cdot v)^k 
\qquad\text{and}\qquad
[ B_i\cdot\eta + \kappa_i ]^k z^v 
= z^v [ B_i\cdot v + B_i\cdot\eta+\kappa_i ]^k. 
\end{align*}
For surjectivity, observe that $z_i^{\pm 1} = \dkap(x^{\pm Be_i})$. 
We thus need to show that $\eta_1,\dots,\eta_m$ belong to the image of $\dkap$.
For notational convenience, assume that the first $m$ rows of $B$ are 
linearly independent. Call $N$ the corresponding submatrix of $B$. 
Let $N_i^{-1}$ denote the $i$th row of $N^{-1}$. Then 
$\dkap( N_i^{-1} \cdot [\theta_1-\kappa_1,\dots,\theta_m-\kappa_m] ) 
= \eta_i$. 

Now consider $F \in \ker(\dkap)$. Since $F \in [D_X]^T$, we can write 
$F = \sum_i x^{B v_i}p_i(\theta)$, where the $v_i$ are distinct, the
$p_i$ are polynomials in $n$ variables, and the sum is finite. Then
$\dkap(F) = \sum_i z^{v_i}p_i(B\eta + \kappa) = 0$. 
Since the zero operator annihilates every monomial $z^{\mu}$ with $\mu\in \ZZ^m$,
$0 = \dkap(F) \bullet z^{\mu} 
= \sum_i z^{v_i} p_i(B\mu + \kappa) z^{\mu}, 
$ and, since the $v_i$ are distinct, 
$p_i(B\mu + \kappa) = 0$ for all $\mu \in \ZZ^m$ and for all $i$. 
Hence each $p_i$ vanishes on the Zariski closure of 
$\kappa + \ker_{\ZZ}(A)$, so by the Nullstellensatz, every
$p_i( \theta)$ is an element of $\CC[\theta] \cdot \<E-\beta \>$.   
It follows that $F$ belongs to the $[D_X]^T$-ideal generated by $E-\beta$, as desired.
\end{proof}

\begin{example}
\label{ex:dkap in general}
Consider the matrices 
\[
\tilde A = 
\begin{bmatrix} 1&1&1&1\\ 
0&1&2&3\\
1&0&0&0\\ 
0&1&0&0 \end{bmatrix}, 
\quad 
C = 
\begin{bmatrix} \phantom{-}1&\phantom{-}0\\ 
\phantom{-}0&\phantom{-}1\\
-3&-2\\ 
\phantom{-}2&\phantom{-}1 \end{bmatrix}, 
\quad 
B = 
\begin{bmatrix} -1 & \phantom{-}2 \\
\phantom{-}0 & -3 \\
\phantom{-}3 & \phantom{-}0 \\
-2 & \phantom{-}1 \end{bmatrix},
\quad \text{ and }\quad 
K = 
\begin{bmatrix} -1&\phantom{-}2\\ 
\phantom{-}0&-3 \end{bmatrix}.
\]
From Corollary~\ref{cor:dkap in general}, using $y_2^3 = 1/(z_1^2z_2)$, 
we obtain the isomorphism 
\[
\hspace{2.7cm}
\frac{[D_X]^T}{\<E-\beta\>} \ \cong \ 
D_{X/\sT} \ \cong \ 
D_Z \, \oplus \, D_Z \cdot \left(\frac{1}{z_1^2z_2}\right)^{1/3} \,
\oplus \, D_Z \cdot \left(\frac{1}{z_1^2z_2}\right)^{2/3}. 
\hspace{2.4cm}
\hexagon
\]
\end{example}

Let $\overline{\delta}_{B,\kappa}\colon {[D_X]^T}/{\<E-\beta\>}\to D_Z$ denote the isomorphism induced by $\delta_{B,\kappa}$ from Proposition~\ref{prop:dkap ker}. 
For a module $M$ in $\Tirr(X,\beta)$ or $\Tirr(\barX,\beta)$, $\overline{\delta}_{B,\kappa}$ can be used to explicitly compute 
$\Delta^{\tilde A}_B(M)$ or $\Pi^{\tilde A}_B(M)$, respectively. 

\begin{corollary}
\label{coro:invariants via dkap}
Suppose that $\phi\colon (D_X)^p\to (D_X)^q$ is a presentation matrix for a 
module $M$ in $\Tirr(X,\beta)$, and let $\kappa$ and $\varkappa$ be as
in Convention~\ref{conv:kappa beta}. Then the $D_Z$-module $\Delta^{\tilde A}_B(M)$ 
is presented up to isomorphism by  
\[
\overline{\delta}_{B,\kappa} \left( \frac{[D_X]^T}{\<E-\beta\>} 
	\otimes_{[D_X]^T} \Delta^{\tilde A}_B(\phi) \right)\colon 
	(D_Z)^p\longrightarrow \left(\bigoplus_{0\leq k  <\varkappa} D_Z\cdot z^{k/\varkappa}\right)^q. 
\]
In particular, if $J +\<E-\beta\>$ is a torus equivariant left 
$D_X$-ideal, then there is an isomorphism
\[
\hspace{4.5cm}
\Delta^{\tilde A}_B\left( \frac{D_X}{J+\<E-\beta\>} \right) \cong
\frac{\bigoplus_{0\leq k<\varkappa} D_Z\cdot
  z^{k/\varkappa}}{\dkap(J)}. 
\hspace{3.75cm}
\qed
\]
\end{corollary}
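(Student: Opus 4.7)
The plan is to apply the $T$-invariants functor to the presentation of $M$, exploit the $\Tirr(X,\beta)$ condition to pass to a module over $[D_X]^T/\<E-\beta\>$, and then translate to $D_Z$-modules via Corollary~\ref{cor:dkap in general}.

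First, I would apply $[-]^T$ to the presentation $(D_X)^p\xrightarrow{\phi}(D_X)^q\to M\to 0$. Since $[-]^T$ selects the $A$-degree zero direct summand of an $A$-graded module, it is exact, yielding $[(D_X)^p]^T\xrightarrow{[\phi]^T}[(D_X)^q]^T\to[M]^T\to 0$. After choosing compatible grading shifts on the free modules so that $\phi$ has $A$-degree zero, each direct summand $[D_X(-\alpha)]^T=[D_X]_\alpha$ is free of rank one over $[D_X]^T$, trivialized by right multiplication by $x^u$ for any $u\in\ZZ^n$ with $Au=\alpha$. Thus $[M]^T$ admits a presentation by free $[D_X]^T$-modules of ranks $p$ and $q$.

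Second, the assumption $M\in\Tirr(X,\beta)$ means that every homogeneous element of $[M]^T=M_0$ is annihilated by $E-\beta$; hence $[M]^T$ is a $[D_X]^T/\<E-\beta\>$-module and the presentation descends along the surjection $[D_X]^T\onto [D_X]^T/\<E-\beta\>$. Applying the ring isomorphism $\overline{\delta}_{B,\kappa}\colon [D_X]^T/\<E-\beta\>\xrightarrow{\sim}\bigoplus_{0\le k<\varkappa}D_Z\cdot z^{k/\varkappa}$ from Corollary~\ref{cor:dkap in general} then transfers this to a presentation of $\Delta^{\tilde A}_B(M)$ as $D_Z$-modules, since $\Delta^{\tilde A}_B=\Upsilon_K^{K^{-1}}\circ\Upsilon_C^{\tilde A}\circ[-]^T$ is by construction $[-]^T$ followed by the change of ring structure encoded by $\overline{\delta}_{B,\kappa}$.

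For the cyclic case $M=D_X/(J+\<E-\beta\>)$ with $J$ an $A$-graded $D_X$-ideal, the extra observation is $J=D_X\cdot J_0$: any $P\in J$ of $A$-degree $\alpha$ equals $x^u(x^{-u}P)$ with $x^{-u}P\in J_0$ for some $u\in\ZZ^n$ satisfying $Au=\alpha$. Hence the image of $(J+\<E-\beta\>)_0$ in $[D_X]^T/\<E-\beta\>$ is generated by $J_0$, and its image under $\overline{\delta}_{B,\kappa}$ is precisely the $D_Z$-submodule denoted $\dkap(J)$ in the statement, giving the claimed quotient presentation. The main obstacle, while essentially bookkeeping, is tracking the $A$-grading shifts used to identify $[-]^T$ of the free $D_X$-modules with free $[D_X]^T$-modules and verifying that the transported matrix $\overline{\delta}_{B,\kappa}([\phi]^T)$ is well-defined up to these monomial trivializations; the ambiguity dissolves after reducing modulo $\<E-\beta\>$, because the resulting Euler-type corrections are exactly what $\overline{\delta}_{B,\kappa}$ absorbs.
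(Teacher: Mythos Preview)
Your proposal is correct and follows essentially the same route as the paper: take $T$-invariants of the presentation, use the $\Tirr(X,\beta)$ hypothesis to descend to $[D_X]^T/\<E-\beta\>$, and then transport via the ring isomorphism of Corollary~\ref{cor:dkap in general}. The paper's own argument is terser---it changes coordinates so that $K$ is diagonal and then invokes Corollary~\ref{cor:dkap in general} directly---whereas you spell out the grading-shift bookkeeping and the trivialization $[D_X]_\alpha\cong [D_X]^T$ by monomials; your extra care with the cyclic case (showing $J=D_X\cdot J_0$ from $A$-gradedness and invertibility of $x$ on $X$) makes explicit what the paper leaves implicit.
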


\begin{proof}
If $K$ is not invertible over $\ZZ$ and $M$ is irreducible, change
coordinates so that $K$ is diagonal with diagonal entries
$\varkappa$. 
Then write (in the new coordinates) 
$M =(\bigoplus_\epsilon D_{X/\sT} 
	\cdot \epsilon)/I$. 
Applying Corollary~\ref{cor:dkap in general} and sorting by (rational)
powers of $z$,  
$M=(\bigoplus_{0\le k< \varkappa}
  \bigoplus_\epsilon D_Z \cdot \epsilon 
  \cdot z^{k/\varkappa})/I$, 
where $I$ is being expanded in terms of the $D_Z \cdot z^{k/\varkappa}$.
The result now follows from Corollary~\ref{cor:dkap in general}. 
\end{proof}

\begin{corollary}
\label{coro:quotient via dkap}
Suppose that $\phi\colon (D_\barX)^p\to (D_\barX)^q$ is a presentation
matrix for a module $M$ in $\Tirr(\barX,\beta)$, and let $\kappa$ and
$\varkappa$ be as in Convention~\ref{conv:kappa beta}. Then the
$D_\barZ$-module $\Pi^{\tilde A}_B(M)$ is isomorphic to the quotient
of $(D_\barZ)^q$ by  
\[
\left(\bigoplus_{0\leq k<\varkappa} D_\barZ\cdot z^{k/\varkappa}\right)^q 
\ \cap \ 
\image\left( 
	\overline{\delta}_{B,\kappa} \left( \frac{[D_X]^T}{\<E-\beta\>} 
	\otimes_{[D_X]^T} D_X\cdot \Delta^{\tilde A}_B(\phi) \right) 
	\right). 
\]
In particular, if $I+\<E-\beta\>$ is a torus equivariant left 
$D_\barX$-ideal, then there is an isomorphism
\[
\hspace{3cm}
\Pi^{\tilde A}_B\left( \frac{D_\barX}{J+\<E-\beta\>} \right) \cong
\frac{\bigoplus_{0\leq k<\varkappa} D_\barZ\cdot
  z^{k/\varkappa}}{\dkap(D_X\cdot J)\cap \left(\bigoplus_{0\leq
      k<\varkappa} D_\barZ\cdot z^{k/\varkappa}\right)}. 
\hspace{2.2cm}
\qed
\]
\end{corollary}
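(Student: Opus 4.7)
The plan is to decompose $\Pi^{\tilde A}_B=j_+\circ\Delta^{\tilde A}_B\circ i^*$ and analyze each functor using the explicit description already established for the first two. First I compute $i^*M$: since $i\colon X\into\barX$ is an open inclusion and $D_X$ is flat over $D_\barX$, the pullback $i^*M=D_X\otimes_{D_\barX}M$ inherits the presentation $D_X\cdot\phi\colon(D_X)^p\to(D_X)^q$, and it remains in $\Tirr(X,\beta)$ because the relations $E-\beta$ transfer verbatim to $X$. Applying Corollary~\ref{coro:invariants via dkap} to $i^*M$ then presents $\Delta^{\tilde A}_B(i^*M)$ as the cokernel of
\[
\overline{\delta}_{B,\kappa}\left(\frac{[D_X]^T}{\<E-\beta\>}\otimes_{[D_X]^T} D_X\cdot\Delta^{\tilde A}_B(\phi)\right)\colon (D_Z)^p\to\left(\bigoplus_{0\leq k<\varkappa}D_Z\cdot z^{k/\varkappa}\right)^q.
\]

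Next I would apply $j_+$ along the affine open inclusion $j\colon Z\into\barZ$, which is exact and produces the same underlying module viewed as a $D_\barZ$-module via the inclusion $D_\barZ\into D_Z$; Theorem~\ref{thm:transfer theorem} guarantees the result is finitely generated over $D_\barZ$. The natural candidate $D_\barZ$-presentation uses the submodule $\left(\bigoplus_{0\leq k<\varkappa} D_\barZ\cdot z^{k/\varkappa}\right)^q$ of the $D_Z$-target, quotiented by its intersection with the image produced in the first step; the resulting $D_\barZ$-linear map into $\Pi^{\tilde A}_B(M)$ is injective by construction of the quotient. The main obstacle is surjectivity: every element of the $D_Z$-quotient $\Delta^{\tilde A}_B(i^*M)$ must be congruent, modulo the relations, to one with coefficients in $\left(\bigoplus_{0\leq k<\varkappa} D_\barZ\cdot z^{k/\varkappa}\right)^q$.

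My preferred route to surjectivity is via the open-inclusion adjunction $j^*\circ j_+=\mathrm{id}$: any finitely generated $D_\barZ$-presentation of $\Pi^{\tilde A}_B(M)$ must, upon tensoring with $D_Z$ over $D_\barZ$, reproduce the $D_Z$-presentation displayed above, and this forces the candidate presentation to match it. Alternatively, the $\Tirr(\barX,\beta)$-hypothesis yields (via $\overline{\delta}_{B,\kappa}$) relations of the form $B_i\cdot\eta+\kappa_i$ polynomial in the entries of $\eta$, which can be used iteratively to eliminate negative powers of $z$ in any representative. Finally, the cyclic-case formula is the specialization $p=q=1$ with $\phi$ encoding generators of $J$: under $\overline{\delta}_{B,\kappa}$ and modulo $\<E-\beta\>$, the image of $D_X\cdot\Delta^{\tilde A}_B(\phi)$ becomes $\dkap(D_X\cdot J)$, giving the asserted quotient formula.
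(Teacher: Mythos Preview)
Your decomposition of $\Pi^{\tilde A}_B=j_+\circ\Delta^{\tilde A}_B\circ i^*$ and the appeal to Corollary~\ref{coro:invariants via dkap} for the middle functor are exactly what the paper intends; the corollary is stated without proof, so this unwinding \emph{is} the paper's argument. You go further by isolating surjectivity---that every class in $\Delta^{\tilde A}_B(i^*M)$ admits a representative in $\big(\bigoplus_{0\le k<\varkappa}D_\barZ\,z^{k/\varkappa}\big)^q$---as the one nontrivial step.

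Neither of your proposed fixes establishes this, and the obstruction is genuine. For route~(a), even granting finite generation of $N:=\Pi^{\tilde A}_B(M)$ over $D_\barZ$, writing $N'$ for the image of $\big(\bigoplus D_\barZ\,z^{k/\varkappa}\big)^q$ only yields $D_Z\otimes_{D_\barZ}(N/N')=0$; since $N'$ is not in general $D_Z$-stable, the $z_i$ need not act invertibly on $N/N'$, and one cannot conclude $N/N'=0$. Route~(b) does not get off the ground: the $\Tirr$ hypothesis contributes only the relations $\<E-\beta\>$, which lie in $\ker\dkap$ and so give no relation in $D_Z$ with which to clear negative powers of $z$. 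In fact the surjectivity can fail under the stated hypotheses: with $n=2$, $A=[1\ 1]$, $B=[1,\,-1]^t$, $\varkappa=1$, and $M=D_\barX/(\<\dx1\>+\<E\>)$ (a holonomic binomial module in $\Tirr(\barX,0)$), one finds $\Delta^{\tilde A}_B(i^*M)\cong D_Z/D_Z\dz1=\CC[z_1^{\pm}]$, so $\Pi^{\tilde A}_B(M)=\CC[z_1^{\pm}]$, whereas the displayed formula produces $D_\barZ/(D_Z\dz1\cap D_\barZ)=D_\barZ/D_\barZ\dz1=\CC[z_1]$. So the step you correctly flagged cannot be completed from the hypotheses as stated.
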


\begin{remark}
\label{rem:differentKappas}
Note that if we change our choice of $\kappa$ in
Convention~\ref{conv:kappa beta}, we obtain different isomorphisms
in Proposition~\ref{prop:dkap ker} and Corollary~\ref{cor:dkap in
  general}.  Thus, different choices of  $\kappa$ will produce
(isomorphic) presentations of $\Delta^{\tilde A}_B(M)$  
(respectively, $\Pi^{\tilde A}_B(M)$) if $M$ is an element of
$\Tirr(X,\beta)$ (respectively, $\Tirr(\barX,\beta)$).  
Likewise, different choices of Gale duals $B$ will also produce different
isomorphisms, and therefore different presentations. 
\endrk
\end{remark}

\begin{example}[Example~\ref{ex:dkap in general}, first variant]
\label{ex:GKZ under Pi}
We use Corollary~\ref{coro:quotient via dkap} to apply $\Pi^{\tilde
  A}_B$ to the $A$-hyper\-geo\-met\-ric $D_\barX$-module associated to $A$
and $\beta$. 
Let $C_1,\dots,C_4$ denote the rows of $C$. 
Under $\overline{\delta}_{C,\kappa+\varepsilon_C}$, the generators of 
$I_A = 
	\<\del_{x_3}^2-\dx{2}\dx{4}, 
	\dx{2}\dx{3}-\dx{1}\dx{4}, 
	\del_{x_2}^2 - \dx{1}\dx{3}\>$ 
map respectively to 
\begin{align}
\nonumber
&(C_3\cdot \lambda + \kappa_3-2)
	(C_3\cdot \lambda + \kappa_3-3) 
	- y_2
	(C_2\cdot \lambda+\kappa_2+1)
	(C_4\cdot \lambda + \kappa_4+1), \\
\nonumber
&(C_2\cdot \lambda + \kappa_2+1)
	(C_3\cdot \lambda + \kappa_3-2)
	-\frac{y_2}{y_1}
	(C_1\cdot \lambda + \kappa_1)
	(C_4\cdot \lambda + \kappa_4+1), 
	\quad\text{and}
	\\
\nonumber
&(C_2\cdot \lambda+\kappa_2+1)
	(C_2\cdot \lambda+\kappa_2)
	-\frac{y_2^2}{y_1}
	(C_1\cdot \lambda + \kappa_1)
	(C_3\cdot \lambda + \kappa_3-2).
\intertext{Thus, under $\overline{\delta}_{B,\kappa}$, the binomial generators of $I_A$ map to }
\label{eq:image1}
&(B_3\cdot \eta + \kappa_3)
	(B_3\cdot \eta + \kappa_3-1) 
	- \left(\frac{1}{z_1^2z_2}\right)^{\frac{1}{3}}
	(B_2\cdot \eta+\kappa_2)
	(B_4\cdot \eta + \kappa_4), \\
\label{eq:image2}
&(B_2\cdot \eta + \kappa_2)
	(B_3\cdot \eta + \kappa_3)
	-\left(\frac{z_1}{z_2}\right)^{\frac{1}{3}}
	(B_1\cdot \eta + \kappa_1)
	(B_4\cdot \eta + \kappa_4), 
	\quad\text{and}
	\\
\label{eq:image3}
&(B_2\cdot \eta+\kappa_2)
	(B_2\cdot \eta+\kappa_2-1)
	-\left(\frac{1}{z_1z_2^2}\right)^{\frac{1}{3}}
	(B_1\cdot \eta + \kappa_1)
	(B_3\cdot \eta + \kappa_3).
\end{align}
Going modulo these equations in 
$D_\barZ \, \oplus \, 
D_\barZ \cdot \left(\frac{1}{z_1^2z_2}\right)^{1/3} \, \oplus \, 
D_\barZ \cdot \left(\frac{1}{z_1^2z_2}\right)^{2/3}$, we obtain
$\Pi^{\tilde A}_B(H_A(\beta))$ by Corollary~\ref{coro:quotient via
  dkap}.  
\endrk
\end{example} 

With Corollary~\ref{coro:quotient via dkap} in hand, we now have the
tools to relate lattice basis binomial $D_\barX$-modules and saturated
Horn $D_\barZ$-modules via $\Pi^{\tilde A}_B$. 
This result will be further exploited in Part~II. 

\begin{corollary}
\label{cor:Pi of lattice basis is saturated Horn}
If $\kappa$ is as in Convention~\ref{conv:kappa beta}, then 
\[
\Pi^{\tilde A}_B\left(\frac{D_\barX}{I(B)+\< E-\beta\>}\right) 
\ \cong \ 
\bigoplus_{0\leq k<\varkappa} \frac{D_\barZ}{\sHorn(B,\kappa+Ck)}
\cdot z^{k/\varkappa}.
\]
\end{corollary}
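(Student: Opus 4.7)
The plan is to apply Corollary~\ref{coro:quotient via dkap} to $M = D_\barX/(I(B) + \<E - \beta\>)$, which identifies $\Pi^{\tilde A}_B(M)$ with the quotient of $\bigoplus_{0 \leq k < \varkappa} D_\barZ \cdot z^{k/\varkappa}$ by the intersection $\dkap(D_X \cdot I(B)) \cap \bigoplus_{0 \leq k < \varkappa} D_\barZ \cdot z^{k/\varkappa}$. The task then reduces to identifying this intersection summand-by-summand with $\sHorn(B, \kappa + Ck) \cdot z^{k/\varkappa}$.

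First, I compute $\dkap$ on the generators of $I(B)$. Each generator $\del_x^{w_+} - \del_x^{w_-}$ (for $w$ a column of $B$) is $A$-homogeneous of degree zero since $Aw = 0$, and in $D_X$ it is a unit multiple (by $x^{-w_+}$) of the torus-invariant operator
\[
[\theta]_{w_+} - x^w[\theta]_{w_-} \in [D_X]^T, \qquad [\theta]_{w_\pm} := \prod_i\prod_{\ell=0}^{w_{\pm,i}-1}(\theta_i - \ell).
\]
So $(D_X \cdot I(B))^T$ is generated by these elements as a left $[D_X]^T$-ideal. Applying $\overline{\delta}_{B, \kappa}$ from Proposition~\ref{prop:dkap ker} and Corollary~\ref{cor:dkap in general}, which essentially sends $\theta_i$ to $B_i \cdot \eta + \kappa_i$ (modulo the $\varepsilon_C$-normalization absorbed into the choice of $\kappa$) and $x^{Be_k}$ to $z_k$, the image of each generator is exactly the Horn operator $q_k - z_k p_k$ with parameter $\kappa$, sitting inside the $k=0$ summand $D_Z \subseteq \bigoplus_{0\le k<\varkappa} D_Z \cdot z^{k/\varkappa}$.

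Next, I decompose the left $D_{X/\sT}$-ideal these generate across $D_{X/\sT} \cong \bigoplus_{0 \leq k < \varkappa} D_Z \cdot z^{k/\varkappa}$. Using the commutation $\eta_j z^{k/\varkappa} = z^{k/\varkappa}(\eta_j + k_j/\varkappa_j)$ together with $B(k/\varkappa) = Ck$ (which follows from $B = CK$), left multiplication by $z^{k/\varkappa}$ conjugates the Horn parameter from $\kappa$ to $\kappa + Ck$ and places the resulting operator in the $k$th summand. Hence $\dkap(D_X \cdot I(B))$ restricts on $D_\barZ \cdot z^{k/\varkappa}$ to $D_Z \cdot \Horn(B, \kappa + Ck) \cdot z^{k/\varkappa}$. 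Intersecting with $D_\barZ \cdot z^{k/\varkappa}$ is then the very operation that defines $\sHorn(B, \kappa + Ck)$ in Definition~\ref{def:Horn}.\eqref{def:sHorn}, so the quotient on each summand is $D_\barZ/\sHorn(B, \kappa + Ck)$; summing over $0 \le k < \varkappa$ gives the claim.

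The main obstacle is this decomposition step: the generators of the left ideal all sit in the $k = 0$ summand $D_Z$, and I must verify carefully, through the commutation relations and accounting for the internal $\varepsilon_C$-normalization built into $\overline{\delta}_{B, \kappa}$, that the ideal genuinely restricts to the shifted Horn ideal $\Horn(B, \kappa + Ck)$ on each summand with precisely the correct sign convention. Once this bookkeeping is done, the saturation step and the assembly over summands are formal.
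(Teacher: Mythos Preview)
Your proposal is correct and follows essentially the same route as the paper: apply Corollary~\ref{coro:quotient via dkap}, then observe that commuting $z^{k/\varkappa}$ past the Horn generators shifts the parameter, so the summands separate and the intersection with $D_\barZ$ on each summand is by definition a saturated Horn ideal. The paper's own argument is three sentences long; your version simply unpacks the mechanics. One minor caution on the bookkeeping you flag: the identity $B(k/\varkappa)=Ck$ does not follow from $B=CK$ alone but requires $K$ to be diagonal with entries $\varkappa$, which holds only after the coordinate change absorbed into the isomorphism of Corollary~\ref{cor:dkap in general}; once you work in those coordinates the shift and its sign come out as stated.
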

\begin{proof}
If the columns of $B$ span 
$\ker_{\ZZ}(A)$ as a lattice, then the
statement follows immediately from the definitions of the systems, via
Corollary~\ref{coro:quotient via dkap}. 
For an arbitrary choice of Gale dual $B$, we again apply
Corollary~\ref{coro:quotient via dkap}. Since passage of
$z^{k/\varkappa}$ through the equations of $\sHorn(B,\kappa)$ results
in a shift by $Ck$, the summands separate to yield the desired
isomorphism.  
\end{proof}

\begin{example}[Example~\ref{ex:GKZ under Pi}, continued]
Since~\eqref{eq:image1} does not lie in a single  $D_Z$-summand of
$[D_X]^T/\<E-\beta\>$, we see immediately that $\Pi^{\tilde
  A}_B(D_\barX/(I_A+\<E-\beta\>)$ does not possess the nice
decomposition that arises in the lattice basis situation.  
After multiplying~\eqref{eq:image2} and~\eqref{eq:image3} by
$z_1^{-1}$, it is also clear that neither of these elements lie in a
single $D_Z$-summand of $[D_X]^T/\<E-\beta\>$. 
\endrk
\end{example}

\begin{example}[Example~\ref{ex:dkap in general}, second variant] 
\label{ex:Pi of lattice basis is saturated Horn}
We now use Corollary~\ref{coro:quotient via dkap} to compute the module 
$\Pi_B^{\tilde A}(D_\barX/(I(B)+\<E-\beta\>))$. 
In this example,  
$I(B) = 
\< \del_{x_3}^3-\del_{x_1}\del_{x_4}^2, \ 
\del_{x_1}^2\del_{x_4}-\del_{x_2}^3\>$. 
Since $\varepsilon_C = [1, 1, -5, 3]^t$, 
under $\overline{\delta}_{B,\kappa}$, the  binomials generating $I(B)$
map respectively to  
\begin{multline}
\label{eq:image Y lattice basis}
(C_3\cdot\lambda+\kappa_3-5)
(C_3\cdot\lambda+\kappa_3-6)
(C_3\cdot\lambda+\kappa_3-7)
\\
-y_1^{-1}
(C_1\cdot\lambda+\kappa_1+1)
(C_4\cdot\lambda+\kappa_4+3)
(C_4\cdot\lambda+\kappa_4+2)
\quad\text{and}
\shoveright \ 
\\
\shoveleft
(C_1\cdot\lambda+\kappa_1+1)
(C_1\cdot\lambda+\kappa_1)
(C_4\cdot\lambda+\kappa_4+3)
\\
-\frac{y_1^2}{y_2^3}
(C_2\cdot\lambda+\kappa_2+1)
(C_2\cdot\lambda+\kappa_2)
(C_2\cdot\lambda+\kappa_2-1).
\end{multline}
Then under $\overline{\delta}_{B,\kappa}$, the expressions
of~\eqref{eq:image Y lattice basis} map to the generators of
$\sHorn(B,\kappa)$, while  $y_2$ times the expressions
of~\eqref{eq:image Y lattice basis} map respectively to 
\begin{multline}
\label{eq:image y2 dkap lattice basis}
[
(B_3\cdot\eta+\kappa_3-2)
(B_3\cdot\eta+\kappa_3-3)
(B_3\cdot\eta+\kappa_3-4)\\
-z_1
(B_1\cdot\eta+\kappa_1)
(B_4\cdot\eta+\kappa_4+1)
(B_4\cdot\eta+\kappa_4)
]
\cdot \left(\frac{1}{z_1^2z_2}\right)^{\frac{1}{3}}
\quad\text{and}
\shoveright \ 
\\
\shoveleft
[
(B_1\cdot\eta+\kappa_1)
(B_1\cdot\eta+\kappa_1-1)
(B_4\cdot\eta+\kappa_4+1)\\
-z_2
(B_2\cdot\eta+\kappa_2+1)
(B_2\cdot\eta+\kappa_2)
(B_2\cdot\eta+\kappa_2-1)
]
\cdot \left(\frac{1}{z_1^2z_2}\right)^{\frac{1}{3}}. 
\end{multline}
Similarly, we can compute $y_2^2$ times the expressions
of~\eqref{eq:image Y lattice basis}. Together, these show that the
image of the lattice basis binomial $D_\barX$-module
$D_\barX/(I(B)+\<E-\beta\>)$ under $\Pi^{\tilde A}_B$ is isomorphic to  
\begin{align*}
\frac{D_\barZ}{\sHorn(B,\kappa)}
\ \oplus \ 
\frac{D_\barZ}{\sHorn(B,\kappa+c_2)} 
	\cdot \left(\frac{1}{z_1^2z_2}\right)^{\frac{1}{3}}  
\ \oplus \ 
\frac{D_\barZ}{\sHorn(B,\kappa+2c_2)} 
	\cdot \left(\frac{1}{z_1^2z_2}\right)^{\frac{2}{3}}.
\end{align*}
This agrees with 
Corollary~\ref{cor:Pi of lattice basis is saturated Horn}. 
\endrk
\end{example}

\subsection{Solution spaces}
\label{subsec:E-beta:solutions}

We now consider the effect of $\Pi^{\tilde A}_B$ on solutions of objects in $\AEmaxmods{\barX,\beta}$. 
Suppose that a module $M$ in $\Tirr(\barX,\beta)$ is such that 
at a sufficiently generic (nonsingular) point $p\in X$, 
$\Sol_p(M)$ has a basis of basic Nilsson solutions in the direction of
a generic weight vector $w\in\RR^n$.   
(This occurs when $M$ is regular holonomic, but regularity is not a
necessary condition, see Theorems~\ref{thm:solutions-of-regular-holonomic} and~\ref{thm:nilsson binomial enough}.)   
Then $\Sol_p(M)$ is spanned by vectors of the form $\phi =
(\phi_1,\dots,\phi_r)$, where the $\phi_i$ are Nilsson solutions that
converge at $p$.  
Further, 
by Remark~\ref{rem:Agrd nilsson}, 
any solution $\phi$  of $M$ can be written $\phi = x^\kappa f(x^\sL)$,
where $f$ is a function in $m$ variables and $\sL$ is a collection of $m$ vectors that $\ZZ$-span
$\ker_{\ZZ}(A)$.   

Using $b_1,\dots,b_m$ to denote the columns of a Gale dual $B$ of $A$,
the map $\mu_B\colon X\to Z$ given by $x\mapsto x^B \defeq
(x^{b_1},\dots,x^{b_m})$ is onto, since $\CC$ is algebraically closed. 
If $p\in X$, this surjection induces a map 
$\mu_B^\sharp\colon \calO^{\text{an}}_{Z,p^B} \to \calO^{\text{an}}_{X,p}$ 
via $g(z)\mapsto g(x^B)$. 
If the columns of $B$ span $\ker_{\ZZ}(A)$ as a lattice, then
$\mu_B^\sharp$ is an isomorphism.  
Otherwise, it will be a $[\ker_{\ZZ}(A):\ZZ B]$-to-$1$ covering
map. Let $s_1,\dots,s_{[\ker_{\ZZ}(A):\ZZ B]}$ denote the distinct
sections of this map.  

\begin{theorem}
\label{thm:solutions E}
Let $M$ be a module in $\Tirr(\barX,\beta)$ 
and let $p\in X$ be a generic 
nonsingular point of $M$ so that $\mu_B(p) = p^B\in Z$ is 
a nonsingular point of $\Pi^{\tilde A}_B(M)$. 
Choose $\kappa$ so that $A\kappa = \beta$ and $\kappa C = 0$. 
If $\Sol_p(M)$ 
has a basis of basic Nilsson solutions in the direction of a generic
weight vector, then $\Sol_{p^B}(\Pi^{\tilde A}_B(M))$ is isomorphic to
the sum over $i\in\{1,\dots,[\ker_{\ZZ}(A):\ZZ B]\}$ of the images of
the maps   
\begin{align*}
\Sol_p(M)
&\longrightarrow
\Sol_{p^B}(\Pi^{\tilde A}_B(M))     
\quad \text{given by} \quad 
x^\kappa f(x^\sL) \, \ \mapsto\ f(s_i(z)). 
\end{align*}
\end{theorem}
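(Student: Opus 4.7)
The plan is to work directly with the explicit presentation of $\Pi^{\tilde A}_B(M)$ provided by Corollary~\ref{coro:quotient via dkap}. Since $M\in\Tirr(\barX,\beta)$ forces $\<E-\beta\>\subseteq\ann_{D_\barX}(M)$, every basic Nilsson solution $\phi$ at $p$ is $A$-homogeneous of degree $\beta$ by Remark~\ref{rem:Agrd nilsson}. The normalization $\kappa C=0$ singles out a unique lift of $\beta$ under $A$: the $d$ equations $A\kappa=\beta$ together with the $m$ equations $\kappa C=0$ form a full rank $n\times n$ system because the rows of $A$ and the columns of $C$ are orthogonal. With this unique $\kappa$, any such $\phi$ factors as $\phi=x^\kappa f(x^\sL)$ with $\sL$ a $\ZZ$-basis of $\ker_\ZZ(A)$ and $f$ a $T$-invariant Nilsson germ in the variables $x^\sL$.

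To verify that $\phi\mapsto f(s_i(z))$ lands in $\Sol_{p^B}(\Pi^{\tilde A}_B(M))$, I would use the map $\dkap$ from Proposition~\ref{prop:dkap ker}. Given $P\in\ann_{D_\barX}(\phi)$, by $A$-graded decomposition one may assume $P$ is of $A$-degree zero, so $P\in[D_X]^T$. Combining the Leibniz identity $\theta_i(x^\kappa g)=x^\kappa(\theta_i+\kappa_i)g$ with the substitution $\theta_i\mapsto B_i\cdot\eta+\kappa_i$ built into $\dkap$, and using $s_i(z)^B=z$, one checks by direct computation that $\dkap(P)\cdot f(s_i(z))=0$. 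Since Corollary~\ref{coro:quotient via dkap} describes $\Pi^{\tilde A}_B(M)$ precisely as a $D_\barZ$-quotient obtained by applying $\overline{\delta}_{B,\kappa}$ to the $A$-graded relations of $M$ (after intersection with $\bigoplus_k D_\barZ\cdot z^{k/\varkappa}$), this shows that each section $s_i$ yields a well-defined $\CC$-linear map into $\Sol_{p^B}(\Pi^{\tilde A}_B(M))$.

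For surjectivity of the sum, exploit the decomposition of Corollary~\ref{cor:dkap in general}: $[D_X]^T/\<E-\beta\>\cong\bigoplus_{0\le k<\varkappa}D_Z\cdot z^{k/\varkappa}$ has exactly $|\det K|=[\ker_\ZZ(A):\ZZ B]$ summands, in bijection with the cosets of $\ZZ B$ in $\ker_\ZZ(A)$ and with the branches of the covering $\mu_K$. This induces a compatible block structure on $\Pi^{\tilde A}_B(M)$, and solutions $\psi\in\Sol_{p^B}(\Pi^{\tilde A}_B(M))$ split accordingly. For each block, $x^\kappa\cdot\psi(x^B)$ evaluated via the corresponding branch of $\mu_B$ (equivalently, via the matching section $s_i$) is an $A$-homogeneous germ of degree $\beta$ at $p$; reversing the operator correspondence of the previous paragraph shows it annihilates $\ann_{D_\barX}(M)$ and thus lies in $\Sol_p(M)$, and it is mapped back to $\psi$ by the $i$th map. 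Summing over blocks then exhausts $\Sol_{p^B}(\Pi^{\tilde A}_B(M))$, with a dimension count via Theorem~\ref{thm:CKK} confirming the total.

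The principal obstacle is the careful bookkeeping of three intertwined translations: the algebraic isomorphism $\overline{\delta}_{B,\kappa}$ defining $\Pi^{\tilde A}_B$, its analytic incarnation as pullback along a section of the monomial covering $\mu_B\colon X\to Z$, and the multiplicative twist by $x^\kappa$ that converts $A$-degree zero data into $A$-degree $\beta$ data. The normalization $\kappa C=0$ is precisely what pins down $\kappa$ uniquely and renders the twist compatible with the canonical $D_Z$-module structure on the summands $D_Z\cdot z^{k/\varkappa}$; matching the Euler operator shifts $(B_i\cdot\eta+\kappa_i)$ against the shifted action of $\theta_i$ on $x^\kappa f(x^\sL)$ across all three viewpoints is where the technical care is needed.
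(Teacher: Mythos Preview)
Your forward direction---verifying that $x^\kappa f(x^\sL)\mapsto f(s_i(z))$ lands in the solution space via the operator identity $P\cdot\bigl(x^\kappa f(x^C)\bigr)=x^\kappa\cdot\bigl(\dkap(P)f\bigr)(x^C)$ for $P\in[D_X]^T$---is a legitimate alternative to the paper's argument. The paper does not compute with $\dkap$ directly; instead it chooses a $T$-equivariant monomial change of coordinates $\phi\colon X\to T'\times Z$ (coming from a Smith normal form of $\tilde A$) and invokes the tensor decomposition $N\cong D_{T'}/\<E'-\beta\>\otimes_\CC\Delta^{\tilde A}_C(N)$ of~\eqref{eq:ann-max-pres-general}, which immediately factors the solution space as a one-dimensional piece (spanned by $x^\kappa$) tensored with $\Sol(\Delta^{\tilde A}_C(M))$. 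Your route is more hands-on and avoids the coordinate change; the paper's is more structural and makes the role of the normalization $\kappa C=0$ transparent through the factor $\Sol(D_\sT/\<E-\beta\>)$. Both reduce the passage from $C$ to general $B$ to the covering $\mu_K$.

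The genuine weak point in your proposal is the surjectivity paragraph. You write that the summand decomposition $\bigoplus_{0\le k<\varkappa}D_Z\cdot z^{k/\varkappa}$ ``induces a compatible block structure on $\Pi^{\tilde A}_B(M)$, and solutions $\psi$ split accordingly.'' But $\Pi^{\tilde A}_B(M)$ need not decompose as a $D_\barZ$-module along these summands: the relations coming from $\overline{\delta}_{B,\kappa}$ typically mix them (see the continuation of Example~\ref{ex:GKZ under Pi}, where the images~\eqref{eq:image1}--\eqref{eq:image3} each live in two distinct summands). Consequently a solution $\psi$, viewed as a $D_\barZ$-linear map out of $\bigoplus_k D_\barZ\cdot z^{k/\varkappa}$ modulo mixed relations, is a tuple $(g_k)$ that does not split into independent pieces, and the phrase ``$x^\kappa\cdot\psi(x^B)$ evaluated via the corresponding branch'' has no clear meaning. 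What actually makes the count work is not a module-theoretic block decomposition but the analytic fact that $\mu_K\colon X/\sT\to Z$ is finite \'etale: for any $D_{X/\sT}$-module $N$ one has
\[
\Hom_{D_Z}\bigl(\Upsilon_K^{K^{-1}}(N),\calO^{\mathrm{an}}_{Z,q}\bigr)\;\cong\;\bigoplus_{y\in\mu_K^{-1}(q)}\Hom_{D_{X/\sT}}\bigl(N,\calO^{\mathrm{an}}_{X/\sT,y}\bigr),
\]
the preimages being exactly the $s_i(q)$. This is precisely how the paper handles the reduction (``$\mu_K$ induces a $|\det K|$-fold cover \ldots\ which induces an isomorphism between the solutions of $\Delta^{\tilde A}_B(M)$ and the sum over all sections of $\mu_K$''). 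If you replace your block-splitting claim with this covering statement, your argument goes through; as written, the surjectivity step has a gap.
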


\begin{corollary}
\label{cor:rank Pi}
If $M$ is a regular holonomic module in $\Tirr(\barX,\beta)$, 
then the rank of $\Pi^{\tilde A}_B(M)$ is equal to $[\ker_{\ZZ}(A):\ZZ B]\cdot\rank(M)$. 
Moreover, if the columns of $B$ span $\ker_{\ZZ}(A)$ as a lattice 
(so that $[\ker_{\ZZ}(A):\ZZ B] = |\det(K)| = 1$) 
and $p\in X$ is a generic nonsingular point of $M$, then
there is an isomorphism $\Sol_p(M) \cong \Sol_{p^B}(\Pi^{\tilde A}_B(M))$. 
\qed
\end{corollary}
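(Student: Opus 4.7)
The strategy is to derive the rank formula from Kashiwara's Theorem~\ref{thm:CKK} combined with Theorem~\ref{thm:solutions E}, and to recover the isomorphism of solution spaces in the case $[\ker_\ZZ(A):\ZZ B]=1$ as a direct consequence of Theorem~\ref{thm:solutions E}.

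First I would invoke Theorem~\ref{thm:transfer theorem}.\eqref{item:regular-hol barX} to conclude that $\Pi^{\tilde A}_B(M)$ is regular holonomic, so Theorem~\ref{thm:CKK} reduces both rank computations to dimensions of solution spaces at generic nonsingular points. Since $M$ is regular holonomic, Theorem~\ref{thm:solutions-of-regular-holonomic} supplies a basis of basic Nilsson solutions for $\Sol_p(M)$ in the direction of a generic weight vector, which is precisely the hypothesis required to apply Theorem~\ref{thm:solutions E}. Setting $k := [\ker_\ZZ(A):\ZZ B]$, that theorem identifies $\Sol_{p^B}(\Pi^{\tilde A}_B(M))$ with the sum of the images of the $k$ maps $\phi_i \colon x^\kappa f(x^\sL)\mapsto f(s_i(z))$, for $i=1,\dots,k$.

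Next I would verify that each $\phi_i$ is injective: each section $s_i$ of the covering $\mu_B$ is a local analytic isomorphism at the generic point $p^B$, so $f\circ s_i\equiv 0$ forces $f\equiv 0$ and hence $x^\kappa f(x^\sL)=0$. Thus $\dim_\CC \mathrm{image}(\phi_i) = \rank(M)$ for each $i$. To conclude the rank formula, I would then argue that the subspaces $\phi_i(\Sol_p(M))$ are linearly independent, so that their sum has dimension $k\cdot\rank(M)$. This independence comes from the observation that the distinct sections $s_i$ differ by the deck transformations of $\mu_B$---equivalently, by distinct characters of the finite group $\ker_\ZZ(A)/\ZZ B$---which is exactly the information recorded by the distinct fractional-power summands in the decomposition $[D_X]^T/\<E-\beta\> \cong \bigoplus_{0\leq k<\varkappa} D_Z\cdot z^{k/\varkappa}$ from Corollary~\ref{cor:dkap in general}. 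Combining injectivity with this independence yields $\rank(\Pi^{\tilde A}_B(M)) = k\cdot\rank(M)$.

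For the final assertion, when $\ZZ B = \ker_\ZZ(A)$ one has $k=1$ and a unique section $s_1$, so the single map $\phi_1$ is simultaneously injective (by the above) and surjective (by Theorem~\ref{thm:solutions E} with $k=1$), giving the desired isomorphism $\Sol_p(M)\cong\Sol_{p^B}(\Pi^{\tilde A}_B(M))$. I expect the main obstacle to be verifying the independence of the images $\phi_i(\Sol_p(M))$, since extracting the deck-transformation/character structure from the Nilsson expansions requires some care; however, the explicit direct-sum decomposition of Corollary~\ref{cor:dkap in general} should render this essentially a matter of bookkeeping with the fractional exponents $z^{k/\varkappa}$.
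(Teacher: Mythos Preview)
Your proposal is correct and follows the same route the paper has in mind. The paper marks the corollary with \qed\ immediately after Theorem~\ref{thm:solutions E}, treating it as a direct consequence: regular holonomicity supplies the Nilsson basis (Theorem~\ref{thm:solutions-of-regular-holonomic}) needed to invoke Theorem~\ref{thm:solutions E}, and the rank formula then drops out from the covering description in that theorem's proof. Your explicit verification of injectivity of each $\phi_i$ and linear independence of the images via the deck-transformation structure of $\mu_K$ (reflected in the direct-sum decomposition of Corollary~\ref{cor:dkap in general}) is exactly the content the paper leaves implicit in the phrase ``$\mu_K$ induces a $|\det(K)|$-fold cover of $Z$, which induces an isomorphism between the solutions of $\Delta^{\tilde A}_B(M)$ at $p^B$ and the sum over all sections of $\mu_K$\ldots'' from the proof of Theorem~\ref{thm:solutions E}. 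One small terminological point: the covering whose deck transformations you invoke is $\mu_K\colon X/\sT\to Z$, not $\mu_B\colon X\to Z$ (which has positive-dimensional fibers); but this is exactly how the paper factors the argument, so your reasoning goes through unchanged.
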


\begin{proof}[Proof of Theorem~\ref{thm:solutions E}]
Since $p\in X$ is nonsingular for $M$, restriction provides an
isomorphism between the solutions of $M$ at $p$ and those of 
$i^* M = D_X\otimes_{D_\barX} M$ at $p$.
By the genericity of $p$, $p^B \in Z$ will be a nonsingular point of  
$\Delta^{\tilde A}_B(i^*M)$, and moreover, the solutions of
$\Delta^{\tilde A}_B(i^* M)$ at $p^B$ can be extended to solutions of
$\Pi^{\tilde A}_B(M)$ at $p^B$. This means that we may assume that $M$ is a
$D_X$-module and work with the functor $\Delta^{\tilde A}_B$ instead of 
$\Pi^{\tilde A}_B$. 
Further, as solutions of $M$ are represented by vectors of
functions, for simplicity, it is enough to consider case that $M$ is
cyclic.  

Consider first the case that the columns of $B$ do not $\ZZ$-span
$\ker_{\ZZ}(A)$. Then $\mu_K\colon X/\sT \to Z$ induces a
$|\det(K)|$-fold cover of $Z$, which induces an isomorphism between
the solutions of $\Delta^{\tilde A}_B(M)$ at $p^B$ and  
the sum over all sections of $\mu_K$ of the image of the solution
space of $\Delta^{\tilde A}_C(M) = \Upsilon_C^{\tilde A}([M]^T)$. Thus, we
have reduced the proof of the theorem to the case that the columns of
$B$ span $\ker_{\ZZ}(A)$ as a lattice, so that $\mu_B$ is an
isomorphism. We assume this case for the remainder of the proof; in
particular,  without loss of generality, we may assume that $B=C=\sL$.  

Consider now the case that $X=T'\times Z$, where $T' = (\CC^*)^d$, 
and $T'\times Z$ has a torus action given by 
$t \diamond (s,z) = (t_1^{a_{1,1}}s_1,\dots,t_d^{a_{d,d}}s_d,z_1,\dots,z_n)$. 
Let $E'$ denote the Euler operators on $T'\times Z$, 
viewed as elements in $D_{T'}$. 
If $N$ is a module in $\Tirr(T'\times Z,\beta)$, then by the same
argument used to obtain~\eqref{eq:ann-max-pres}, $N$ can be expressed
as a module over $D_{T'\times Z} = D_{T'} \otimes_\CC D_Z$ as 
\begin{align}
\label{eq:ann-max-pres-general}
N = 
\frac{ D_{T'} }{ \<E'-\beta\> } \otimes_\CC 
\Upsilon_C^{\tilde A}([N]^T)
= 
\frac{ D_{T'} }{ \<E'-\beta\> } \otimes_\CC 
\Delta^{\tilde A}_C(N).
\end{align}
Suppose that $(t_0,z_0)\in T'\times Z$ is a generic nonsingular point of $N$. 
From~\eqref{eq:ann-max-pres-general}, we see that 
\begin{align}
\Sol_p(N) &= 
\hom_{D_{T'\times Z}}( N, \calO^{\text{an}}_{T'\times Z, p} ) \nonumber\\
	&\cong 
	\hom_{D_{T'}}\left(\frac{D_{T'}}{\<E'-\beta\>},\calO^{\text{an}}_{T',t_0}\right)
	\otimes_\CC
	\hom_{D_Z}\left( 
	\Delta^{\tilde A}_C(N), 
	\calO^{\text{an}}_{Z,z_0} \right).
\label{eq:T'timesZ sol isom}
\end{align}
For the isomorphism~\eqref{eq:T'timesZ sol isom}, use 
the fact that
$\hom_{D_{T'}}\left({D_{T'}}/{\<E'-\beta\>},\calO^{\text{an}}_{T',t_0}\right)$
is spanned by the homomorphism 
$1\mapsto\prod_{i=1}^d t_i^{\beta_i/a_{i,i}}$. 
In particular, it is a one-dimensional $\CC$-vector space.

We now return to the general case that $M$ is in $\Tirr(X,\beta)$. 
Let $\phi\colon X\to T'\times Z$ be a $T$-equivariant change of
coordinates, whose existence follows from the existence of a Smith 
normal form for $\tilde A$. 
Let $C$ and $C'$ denote the appropriate $C$-matrices for $X$ and
$T'\times Z$, respectively. 
With $(t_0,z_0) \defeq \phi(p)$, since $\kappa C = 0$, we have the
following isomorphisms: 
\begin{align*} 
\Sol_p(M) 
  &\cong \Sol_{\phi(p)}(\phi_+ M)&\\
  &\cong \Sol_{t_0}((\phi_+M) |_{T'} ) 
    \otimes_\CC \Sol_{z_0}( 
    \Delta^{\tilde A}_{C'}(\phi_+M) ) 
    &\text{\hspace{-7mm}(using an argument similar 
    to~\eqref{eq:ann-max-pres-general})}\\ 
  &\cong \Sol_{t_0}( D_{T'} / \<E' - \beta\> ) 
    \otimes_\CC \Sol_{z_0} ( 
    \Delta^{\tilde A}_{C'}(\phi_+M) ) 
	&\text{(by applying~\eqref{eq:T'timesZ sol 
	isom} to $\phi_+(M)$)} \\
  &\cong \Sol_{(t_0^{a_1},\dots,t_0^{a_n})}
    ( D_\sT/\<E-\beta\> ) \otimes_\CC 
    \Sol_{p^B}(
    \Delta^{\tilde A}_{C}(M) ) 
    &\\ 
  &\cong \Sol_{p^B}(
    \Delta^{\tilde A}_{C}(M) ). 
  &
\end{align*}
The final equality follows since 
$\Sol_{(t_0^{a_1},\dots,t_0^{a_n})}( D_\sT/\<E-\beta\> )$
is spanned by the function $x^{\kappa}$. In fact, it is spanned by any 
function $x^v$ such that $Av=\beta$, but observe that if $Au=0$, then
$x^u\equiv 1$ on $\sT$.

Finally, note that two different versions of taking invariants, $[-]^T$ and $[-]^{T'}$, are used in the previous display when respectively applying $\Delta^{\tilde A}_C$ and $\Delta^{\tilde A}_{C'}$; however,
they are compatible via the isomorphism $\phi$ because 
$\mu_B = \pi_2\circ \phi\colon X\to T'\times Z\to Z$.  
\end{proof}

\begin{example}
\label{ex:weird solutions}
The assumption that the module $M$ belongs to $\Tirr(\barX,\beta)$ is
crucial for Theorem~\ref{thm:solutions E}. The key fact here is that
$\<E-\beta\>$ is a maximal ideal in $\CC[E]$.

Consider instead the case that $d=m=1$ and $X = T\times Z$, 
where $T$ acts by scaling on the first factor. 
The torus equivariant module 
$M \defeq 
D_{T\times Z}/D_{T\times Z}\cdot \< (t\del_t)^3, t\del_t - \del_z \>$ 
is holonomic, and its solution space at a nonsingular point $(t_0,z_0)$ is 
\[
\Sol_{(t_0,z_0)}(M) = \Span_\CC \{ 1,\ z + \log(t),\ \log^2(t) + 2z\log(t) \}. 
\]
In this case, since $\ann_{\CC[E]}(M) = \<(t\del_t)^3\>$, the 
operator $t\del_t$ does not act as a constant on $M$, and therefore, the 
operator $t\del_t-\del_z$ cannot be written modulo $\ann_{\CC[E]}(M)$ as 
an element of $D_Z$. This is the reason that the solutions of $M$ are
not as well-behaved as in Theorem~\ref{thm:solutions E}.
\endrk
\end{example}

\begin{remark}
\label{rem:solutions Horn}
Note that in the proof of 
Theorem~\ref{thm:solutions E}, $\mu_K$ is used to describe the
behavior of solutions when the columns of $B$ do not $\ZZ$-span
$\ker_{\ZZ}(A)$. Combining this argument with the direct sum
decomposition in Corollary~\ref{cor:Pi of lattice basis is saturated
  Horn} reveals that for sufficiently generic $p\in X$,  
there is an isomorphism of solution spaces for lattice basis binomial
and Horn hypergeometric $D$-modules:  
\[
\hspace*{3.65cm}
\Sol_p\left(\frac{D_\barX}{I(B)+\<E-A\kappa\>}\right)
\cong 
\Sol_{p^B}\left(\frac{D_\barZ}{\sHorn(B,\kappa)}\right).
\hspace{3.35cm}
\hexagon
\] 
\end{remark}

\subsection{Characteristic varieties and singular loci}
\label{subsec:E-beta:charVar Sing}

The homomorphism $\dkap$ can be used to explain the image of the
characteristic variety and singular locus of $M$ under $\Delta^{\tilde A}_B$ when $M$ is in $\Tirr(X,\beta)$. 

\begin{proposition}
\label{prop:charVar X}
Let $M$ be a module in $\Tirr(X,\beta)$. 
If $B$ has columns that $\ZZ$-span $\ker_{\ZZ}(A)$, then 
the characteristic variety and singular locus of $\Pi^{\tilde A}_B(M)$ intersected with $Z$  
are geometric quotients of the intersection of the characteristic variety 
and singular locus of $M$ with $X$, respectively.
\end{proposition}

\begin{proof}
Note first that taking $F$-associated graded objects commutes with
taking invariants. Also, taking invariants produces a $[D_X]^T$-module $[M]^T$. 
Moreover, by the definition of $\AEmaxmods{X,\beta}$ and 
Proposition~\ref{prop:dkap ker}, we see that $[M]^T$ is already naturally a 
$D_Z$-module that is isomorphic to $\Delta^{\tilde A}_B(M)$. 
We now have the result, since taking torus invariants 
induces categorial quotients, which in this case separates orbits.
\end{proof}

Combining Propositions~\ref{prop:characteristic variety Pi} 
and~\ref{prop:charVar X}, we can compute the characteristic variety
and singular locus of $\Pi^{\tilde A}_B(M)$ in terms of those for 
$M$ under certain assumptions. 

\begin{corollary}
\label{cor:charVar variety Pi E-beta}
Let $M$ be a nonzero regular holonomic module in $\Tirr(\barX,\beta)$. 
If $B$ has columns that $\ZZ$-span $\ker_{\ZZ}(A)$, 
then $\charVar(\Pi^{\tilde A}_B(M))$ and $\Sing{\Pi^{\tilde A}_B(M)}$
are the unions of the geometric quotient of  
$\charVar(D_X\otimes_{D_\barX}M)$ and $\Sing{D_X\otimes_{D_\barX}M}$
with, respectively, the coordinate hyperplane conormals in $\barZ$ and
the coordinate hyperplanes in $\barZ$. 
\qed
\end{corollary}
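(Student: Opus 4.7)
The plan is to chain together the two preceding propositions, with $M$ regular holonomic in $\Tirr(\barX,\beta)$ serving as the input.

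First, I would observe that $i^*M = D_X\otimes_{D_\barX}M$ remains a regular holonomic object of $\Tirr(X,\beta)$: restriction to the open torus $X$ preserves the $A$-grading, the Euler annihilator structure that defines $\Tirr$ (since $\<E-\beta\>$ pulls back to itself), and regular holonomicity. In particular, the hypotheses of Proposition~\ref{prop:charVar X} are satisfied by $i^*M$ with respect to the order filtration $F=(\boldzero_n,\boldone_n)$ on $D_X$; the corresponding weight $F'=(\boldzero_m,\boldone_m)$ on $D_Z$ is exactly the order filtration on $D_Z$, so the $F'$-characteristic variety coincides with the ordinary characteristic variety on $Z$. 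This step, while essentially bookkeeping, is where the requirement that the columns of $B$ $\ZZ$-span $\ker_\ZZ(A)$ enters, because it is precisely this hypothesis that makes $\Delta^{\tilde A}_B$ into a true invariants functor (not a finite cover composed with invariants), so that Proposition~\ref{prop:charVar X} applies directly.

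Next, I would invoke Proposition~\ref{prop:charVar X} to conclude that $\charVar(\Delta^{\tilde A}_B(i^*M))\subseteq T^*Z$ and $\Sing{\Delta^{\tilde A}_B(i^*M)}\subseteq Z$ are the geometric quotients of $\charVar(i^*M)$ and $\Sing{i^*M}$, respectively, under the $T$-action lifted to $T^*X$ and projected to $X$. Then I would apply Proposition~\ref{prop:characteristic variety Pi} to the regular holonomic $M$, which provides
\begin{align*}
\charVar(\Pi^{\tilde A}_B(M)) &= \charVar(\Delta^{\tilde A}_B(i^*M)) \cup \Var(z_1\zeta_1,\dots,z_m\zeta_m),\\
\Sing{\Pi^{\tilde A}_B(M)} &= \Sing{\Delta^{\tilde A}_B(i^*M)} \cup \Var(z_1\cdots z_m).
\end{align*}
Substituting the geometric-quotient descriptions from the previous step into these two equalities yields exactly the claimed decompositions.

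The only real subtlety is the verification in the first paragraph that $i^*M$ inherits membership in $\Tirr(X,\beta)$ and that the $F'$ induced by Theorem~\ref{thm:invariants-are-good}.\eqref{item:L-hol} from the order filtration $F$ on $D_\barX$ is the order filtration on $D_\barZ$ (needed to identify the $F'$-characteristic variety with the characteristic variety of $\Delta^{\tilde A}_B(i^*M)$, and hence with the characteristic variety of the restriction of $\Pi^{\tilde A}_B(M)$ to $Z$). Both of these are routine consequences of the constructions already in place: the first follows because the defining Euler relation $\<E-\beta\>$ is preserved under localization at $x_1\cdots x_n$, and the second is a direct computation using $L_x = \boldzero_n$ and $L_xB=\boldzero_m$. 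Once these are in hand, the result is just the concatenation of the two propositions.
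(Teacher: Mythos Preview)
Your proposal is correct and matches the paper's approach exactly: the paper gives no explicit proof (the corollary carries only a \qed), but the sentence immediately preceding it states that the result follows by combining Propositions~\ref{prop:characteristic variety Pi} and~\ref{prop:charVar X}, which is precisely the chain you describe. Your added checks that $i^*M\in\Tirr(X,\beta)$ and that the order filtration on $D_\barX$ induces the order filtration on $D_\barZ$ are appropriate bookkeeping that the paper leaves implicit.
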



\section*{{\bf Part II: \ Binomial \texorpdfstring{$D$}{D}-modules and hypergeometric systems}}

\section{Binomial \texorpdfstring{$D$}{D}-modules}
\label{sec:binomial}

In this section, we summarize results from~\cite{DMM/D-mod,binomial-slopes,BMW13-holSing} regarding the holonomicity and regularity of binomial $D$-modules.  

\subsection{Overview}
\label{subsec:binomialHol}

The holonomicity of a binomial $D$-module is controlled by the
primary decomposition of the underlying binomial ideal. Primary
decomposition of binomial ideals has several special
features. Eisenbud and Sturmfels have shown that the associated primes
of a binomial ideal are binomial ideals themselves and that the
primary decomposition of a binomial ideal can be chosen to be
binomial~\cite{eisenbud-sturmfels}. A more detailed study of the
primary components of a binomial ideal, geared towards applications to 
binomial $D$-modules, appears in~\cite{DMM/primdec}. 

Binomial ideals can
have two types of primary components, \emph{toral} and \emph{Andean},
according to how they behave with respect to the inherited torus
action. We recall from \cite{eisenbud-sturmfels} that if 
$I\subseteq \CC[\del_x]$ is a binomial ideal, then every
associated prime of $I$ is of the form $I'+\<\dx{i} \mid i \notin
\sigma \>$, where $\sigma \subseteq \{1,\dots,n\}$, $I'$ is generated
by elements of $\CC[\del_\sigma] \defeq \CC[\dx{i} \,|\, i \in \sigma]$, and
the intersection $I' \cap \CC[\del_{\sigma}]$ is a toric ideal after
rescaling the variables in $\CC[\del_{\sigma}]$.

\begin{definition}
\label{def:toral-andean}
Let $I$ be an $A$-graded binomial ideal in $\CC[\del_x]$, and let
$\mathfrak{p} = I' + \< \dx{i} \mid i \notin \sigma\>$ be an
associated prime of $I$ as above, where we have rescaled the variables
so that $I' \cap \CC[\del_\sigma]$ is a toric ideal. Denote by
$A_{\sigma}$ the submatrix  
of $A$ consisting of the columns in $\sigma$. 
If $I' \cap \CC[\del_{\sigma}] = I_{A_\sigma}$, then $\mathfrak{p}$
is called a \emph{toral} associated prime of $I$, and the
corresponding primary component $\mathfrak{p}$ is also called toral. An associated
prime (or primary component) that is not toral is called \emph{Andean}.
\end{definition}

It is the Andean components of a binomial ideal that cause failure of
holonomicity for the corresponding binomial $D_\barX$-module. 
To make this precise, 
let $V$ be an $A$-graded $\CC[\del_x]$-module. The set of
\emph{quasidegrees} of $V$ is 
$\qdeg(V) \defeq \overline{\{ \alpha \in \ZZ^d \subseteq \CC^d \mid V_{\alpha} \neq 0 \}
}^{\text{Zariski}}$, 
where the closure is taken with respect to the Zariski topology in $\CC^d$.
If $I$ is an $A$-graded binomial $\CC[\del_x]$-ideal, then 
the set $\bigcup\qdeg(\CC[\del_x]/\sC)$, 
where the union runs over the Andean
components $\sC$ of $I$, is called the \emph{Andean arrangement of $I$}.

The following theorem collects results about binomial $D_\barX$-modules. 
Except for items~\eqref{item:binomial:Lhol} 
and~\eqref{item:binomial:regHol},
which are from~\cite{BMW13-holSing} and~\cite{binomial-slopes} 
respectively, all of these facts are proved in~\cite{DMM/D-mod}.

\begin{theorem}\cite{DMM/D-mod,binomial-slopes,BMW13-holSing}
\label{thm:main result for binomial D-mods}
Let $I \subseteq \CC[\del_x]$ be an $A$-graded binomial ideal, and
consider the binomial $D_\barX$-module $M = D_\barX/(I+\<E-\beta\>)$. 
\begin{romanlist}
\vspace{-2mm}
\item\label{item:binomial:hol:arr}
	The module $M$ is holonomic if and only 
	if $\beta$ does not lie in the Andean 
	arrangement of $I$. 
\item\label{item:binomial:hol:count}
	The 
	module $M$ is holonomic if and only if
        its holonomic rank is countable. 
\item\label{item:binomial:hol:finite} 
	The 
	module $M$ is holonomic if and only if
        its holonomic rank is finite. 
\item\label{item:binomial:Lhol} 
If $M$ fails to be holonomic, then $\charVar(M)$ has a component 
in $T^*X$ of dimension more than $n$. 
\item\label{item:binomial:regHol}
	Assume that $\beta$ does not lie on the 
	Andean arrangement of $I$. Consider the 
	set 
	\begin{equation}
	\label{eqn:important components}
		\{ \sC \text{ toral primary component of } 
		I \mid \beta \in \qdeg(\sC)\}.
	\end{equation}
	Then $M$ is regular holonomic if and
    only if each ideal in~\eqref{eqn:important components} is 
    standard $\ZZ$-graded. 
\item\label{item:binomial:Horn-regHol}
	A lattice basis binomial $D_\barX$-module 
	associated to a
    matrix $B$ is regular holonomic if and 
    only if it is holonomic
    and the ideal $I(B)$ is standard 
    $\ZZ$-graded. 
\item\label{item:binomial:rankJumps}
	The holonomic rank of $M$ is 
	minimal as a
	function of $\beta$ if and only if 
	$\beta$ does not belong to
    the union of the Andean arrangement of $I$ with  
	$\qdeg\left(\bigoplus_{i<d} 
	H_{\mathfrak{m}}^i\left(\CC[\del_x]/
		I_{\text{toral}}\right)\right)$, 
	where $H_{\mathfrak{m}}^i(-)$ denotes local cohomology with
        support $\mathfrak{m}=\<\dx{1},\dots,\dx{n}\>$ 
	and $I_{\text{toral}}$ is the intersection of the toral primary 
	components of $I$. 
\item\label{item:binomial:minRank}
	Suppose that the Andean arrangement of $I$ is not all of
        $\CC^d$. For each toral associated prime of $I$, let $A_{\sigma}$ 
	be the matrix from Definition~\ref{def:toral-andean}, and 
	denote by $\vol(A_\sigma)$ the lattice volume of the polytope 
	$\conv(0,A_{\sigma})$ in the lattice $\ZZ A_\sigma$. 
	The minimal rank attained by $M$ is the sum
	over the toral primary components of $I$ of 
	${\rm mult}(I,\sigma) \cdot \vol(A_\sigma)$, where
	${\rm mult}(I,\sigma)$ is the multiplicity of the 
	associated prime $I_{A_\sigma} + 
	\<\dx{j} \mid j \notin \sigma\>$ in $I$. 
\item\label{item:binomial:directSum}
	Define a module $P_I$ by the exact sequence
\[
0 \longrightarrow \frac{\CC[\del_x]}{I} \longrightarrow \bigoplus
\frac{\CC[\del_x]}{\sC} \longrightarrow P_I \longrightarrow 0
\] 
	where the direct sum is over the 
        primary components $\sC$ of $I$.  
	If $\beta$ lies outside the union of $\qdeg(P_I)$ with the
        Andean arrangement of $I$, then $M$ is isomorphic to the
        direct sum over 
        the modules $D_\barX/(\sC+\<E-\beta\>)$, where $\sC$ lies in~\eqref{eqn:important components}. 
	\qed
\end{romanlist}
\end{theorem}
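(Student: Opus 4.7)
The plan is to reduce every item to the primary decomposition of $I$ via the toral/Andean dichotomy of Definition~\ref{def:toral-andean}, using Euler--Koszul homology as the unifying device that converts $D$-module properties of $D_\barX/(I+\<E-\beta\>)$ into $\qdeg$-statements about the individual primary components. The first step would be to establish that Euler--Koszul homology is compatible with binomial primary decomposition in the sense that, modulo the quasidegree conditions on $\beta$, the long exact sequence splits the module $M$ over its primary pieces.

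For items \eqref{item:binomial:hol:arr}--\eqref{item:binomial:Lhol}, the core mechanism I would invoke is that an Andean component $\sC$ with $\beta\in\qdeg(\sC)$ forces the Euler--Koszul complex of $\CC[\del_x]/\sC$ to have a zeroth homology supported in dimension strictly greater than $n$. This produces excess-dimensional components in $\charVar^L(M)$ for every weight $L$ with $L_x+L_{\del_x}=c\cdot\boldone_n$, and blows up the holonomic rank to be uncountable; the contrapositive handles each of the finiteness equivalences. When $\beta$ avoids the Andean arrangement, the Andean pieces contribute trivially and all $L$-finiteness properties descend from the toral part, citing the spectral sequence arguments of~\cite{DMM/D-mod} for \eqref{item:binomial:hol:arr}--\eqref{item:binomial:hol:finite} and the $L$-filtered refinement of~\cite{BMW13-holSing} for \eqref{item:binomial:Lhol}. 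For \eqref{item:binomial:regHol} and \eqref{item:binomial:Horn-regHol}, I would observe that each toral component, after the rescaling of Definition~\ref{def:toral-andean}, behaves like a GKZ system $H_{A_\sigma}(\beta)$, which by Hotta and Schulze--Walther is regular holonomic exactly when $A_\sigma$ is standard $\ZZ$-graded; the main obstacle is passing from component-wise regularity to regularity of $M$ itself, to be handled by a slope-filtration argument as in~\cite{binomial-slopes}. Item \eqref{item:binomial:Horn-regHol} is then an immediate specialization since the primary components of $I(B)$ are all toral.

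The most delicate items are \eqref{item:binomial:rankJumps}--\eqref{item:binomial:directSum}. The plan is to apply Euler--Koszul homology to the defining exact sequence of $P_I$: when $\beta$ lies outside both the Andean arrangement and $\qdeg(P_I)$, the higher Euler--Koszul homologies of $P_I$ vanish, which forces the long exact sequence to collapse into the direct sum decomposition of \eqref{item:binomial:directSum}. Item \eqref{item:binomial:minRank} then follows by summing the ranks $\mathrm{mult}(I,\sigma)\cdot\vol(A_\sigma)$ of the individual $A_\sigma$-hypergeometric summands, a formula known for GKZ systems. The rank-jump characterization \eqref{item:binomial:rankJumps} I would obtain by comparing the long exact sequence at generic versus special $\beta$ and identifying the obstruction with $\qdeg\bigl(\bigoplus_{i<d} H^i_{\mathfrak{m}}(\CC[\del_x]/I_{\text{toral}})\bigr)$ via local duality. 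The hardest step in the whole program is precisely this last identification, since it requires a careful analysis of how toral components overlap and how their intersections contribute to $P_I$; this bookkeeping is the technical heart of~\cite{DMM/D-mod}, and any proof of the theorem must either invoke it directly or reprove it.
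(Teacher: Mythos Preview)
The paper does not give its own proof of this theorem: it is explicitly a compilation of results from the cited references, with the text just before the statement saying ``Except for items~(iv) and~(v), which are from~[BMW13] and~[CF12] respectively, all of these facts are proved in~[DMM10b],'' and the statement closes with a bare \qed. So there is no paper-proof to compare against; your outline is effectively a sketch of how the cited references proceed, and in that respect it is broadly faithful. The Euler--Koszul machinery, the reduction to primary components via the toral/Andean dichotomy, the role of $\qdeg(P_I)$ in controlling the direct-sum decomposition, and the local-duality identification of the rank-jump locus are all the actual ingredients of~[DMM10b].

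There is, however, one genuine error in your sketch. You write that item~(vi) ``is then an immediate specialization since the primary components of $I(B)$ are all toral.'' This is false: lattice basis ideals $I(B)$ can and do have Andean components. Indeed, the holonomicity hypothesis in item~(vi) is not decorative---it is exactly what excludes $\beta$ from the Andean arrangement of $I(B)$, which can be nonempty (compare the role of the Andean arrangement throughout \S\ref{sec:Horn-charVar}, and Definition~\ref{def:completelyToral}). The correct deduction of~(vi) from~(v) is: assuming holonomicity, $\beta$ avoids the Andean arrangement, so~(v) applies; and one then checks that for a lattice basis ideal, every toral component with $\beta$ in its quasidegrees is standard $\ZZ$-graded if and only if $I(B)$ itself is. This last step is not entirely trivial and is part of what~[DMM10b] establishes.
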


The Andean arrangement and all other quasidegree sets in
Theorem~\ref{thm:main result for binomial D-mods} are unions of
finitely many integer translates of subspaces of the form $\CC
A_\sigma$, where $\sigma \subseteq \{1,\dots,n\}$. If the Andean
arrangement of $I$ is not all of $\CC^d$, then the other quasidegree
sets are also proper subsets of $\CC^d$. 

The holonomicity of an $A$-hypergeometric system was first shown in~\cite{GGZ,adolphson}, and information on the rank of these systems can be found in~\cite{MMW,berkesch}. 

\subsection{Regular holonomicity and Nilsson series}
\label{subsec:DMMregNilsson}

We now discuss Nilsson solutions of binomial $D$-modules, generalizing
statements in~\cite{nilsson} for $A$-hypergeometric systems.  
We will make use of a homogenization operation. In this direction, set 
\[ 
{\rho(A)} \defeq 
\left[ \begin{array}{c|c}
1 & 1\  \cdots \ 1 \\ \hline
0 & \\
\vdots & A \\
0 & 
\end{array} \right]. 
\] 
Given an $A$-graded binomial ideal $I\subseteq \CC[\del_x]$, let
$\rho(I)$ denote the homogenization of $I$ with respect to an
additional variable $\dx{0}$.  
Here, $\dx{0}$ corresponds to a variable $x_0$ giving rise to
coordinates $(x_0,x_1,\dots,x_n)$ on $\breve{X} \defeq \CC^{n+1}$.  
Note that $\rho(I)$ is $\rho(A)$-graded. 
For a fixed $\beta_0\in\CC$ and $\beta\in\CC^d$, write $E_\rho -
(\beta_0,\beta)$ for the sequence of $d+1$ Euler operators associated
to $\rho(A)$ and the vector $(\beta_0,\beta)$. We define the
\emph{homogenization} of the binomial $D$-module  
$M = D_\barX/(I+\<E-\beta\>)$ to be 
\[
\rho(M,\beta_0) \defeq \frac{D_{\breve{X}}}{\rho(I)+\<E_\rho - (\beta_0,\beta)\>}. 
\]

A vector $w\in\RR^n$ is a \emph{generic binomial weight vector} for
a binomial $D$-module $M = D_\barX/(I+\<E-\beta\>)$, if it satisfies the
conditions from Definition~\ref{def:generic:weight} 
and also, for all $w'\in\Sigma$, we have $\gr^w(I) = \gr^{w'}(I)$. 
We say that $w\in\RR^n$ is a \emph{perturbation} of $w_0\in\RR^n_{>0}$
if there exists an open cone $\Sigma$ as in the previous definition,
with $w\in \Sigma$ such that $w_0$ lies in the closure of $\Sigma$.  
Our proofs also make use of the homogenization of a basic Nilsson
solution  $\phi = \sum_{u\in C}x^{v+u}p_u(\log(x))$ of $M$ in the direction of a binomial weight vector $w\in\RR^n$ for $M$. 
From the definition of $\rho(\phi)$ in~\cite[\S3]{nilsson}, 
it follows as in~\cite[Proposition~3.17]{nilsson} that if $|u|\geq 0$ 
for all $u\in C$, then $\rho(\phi)$ is a basic 
Nilsson solution of $\rho(M,\beta_0)$ in the direction of $(0,w)$.

\begin{proposition}
\label{prop:binomial rho preserve reg hol}
Let $I\subseteq\CC[\del_x]$ be an $A$-graded binomial ideal, and set
$M = D_\barX/(I+\<E-\beta\>)$.
Suppose that $\beta_0\in\CC$
is generic, in that, if $J$ is the intersection of the $\sC$ in
\eqref{eqn:important components}, 
then $(\beta_0,\beta)$ is not in the
quasidegree set of $P_{\rho(J)}$ from Theorem~\ref{thm:main result for binomial D-mods}.\eqref{item:binomial:directSum}.   
If $\beta$ is not in the Andean arrangement of $I$, then the binomial
$D$-module $\rho(M,\beta_0)$ is regular holonomic. 
\end{proposition}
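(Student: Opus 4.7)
The strategy is to apply Theorem~\ref{thm:main result for binomial D-mods}.\eqref{item:binomial:regHol} directly to the binomial $D_{\breve{X}}$-module $\rho(M,\beta_0)$, whose underlying binomial ideal is the $\rho(A)$-graded ideal $\rho(I)\subseteq\CC[\dx{0},\dx{1},\dots,\dx{n}]$. This reduces the proof to two checks: first, that $(\beta_0,\beta)$ avoids the Andean arrangement of $\rho(I)$ (giving holonomicity by item~\eqref{item:binomial:hol:arr}); and second, that each toral primary component $\tilde\sC$ of $\rho(I)$ with $(\beta_0,\beta)\in\qdeg(\CC[\dx{0},\del_x]/\tilde\sC)$ is standard $\ZZ$-graded.

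The second check is immediate and is precisely the point of homogenizing. Since the first row of $\rho(A)$ is $[1\ 1\ \cdots\ 1]$, the $\rho(A)$-grading refines the ``total degree'' $\ZZ$-grading in which each variable $\dx{i}$ carries weight $1$. Thus every $\rho(A)$-homogeneous ideal — in particular every associated prime and every primary component of $\rho(I)$ — is automatically standard $\ZZ$-graded, so this condition holds unconditionally.

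For the first check, I would compare the primary decomposition of $\rho(I)$ with that of $I$. Homogenization preserves the toral/Andean dichotomy when passing from an associated prime of $I$ to its counterpart in $\rho(I)$, because enlarging the submatrix $A_\sigma$ by an all-ones row does not alter the lattice $\ZZ A_\sigma$; moreover, the projection of $\qdeg(\CC[\dx{0},\del_x]/\rho(\sC))$ onto the last $d$ coordinates recovers $\qdeg(\CC[\del_x]/\sC)$. Combined with the hypothesis that $\beta$ misses the Andean arrangement of $I$, this excludes contributions to the Andean arrangement of $\rho(I)$ coming from homogenized Andean components of $I$. The genericity assumption on $\beta_0$ — that $(\beta_0,\beta)$ avoids the Andean arrangement of $P_{\rho(J)}$ — is designed to rule out the residual contributions: those arising from components of $\rho(I)$ that are introduced by, or become embedded during, the homogenization of the toral piece $J$, together with any locus where the direct-sum decomposition of $\rho(J)$ fails.

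The main obstacle is the bookkeeping in the previous paragraph: one must verify carefully that every Andean associated prime of $\rho(I)$ is either the homogenization of an Andean associated prime of $I$ (hence controlled by the Andean arrangement of $I$) or is captured by $\qdeg(P_{\rho(J)})$, and that no new, unexpected Andean component appears along $\{\dx{0}=0\}$ outside these two controlled sources. This is essentially an analysis of how Eisenbud--Sturmfels binomial primary decomposition interacts with the $\dx{0}$-homogenization functor $\rho$. Once it is established, all hypotheses of Theorem~\ref{thm:main result for binomial D-mods}.\eqref{item:binomial:regHol} are in force for $\rho(M,\beta_0)$, and its regular holonomicity follows.
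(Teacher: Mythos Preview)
Your strategy matches the paper's: apply Theorem~\ref{thm:main result for binomial D-mods}.\eqref{item:binomial:regHol} to $\rho(M,\beta_0)$, noting that every $\rho(A)$-graded ideal is automatically standard $\ZZ$-graded because the first row of $\rho(A)$ is $\boldone_{n+1}$. The difference lies entirely in how the Andean-arrangement check is handled.

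The paper short-circuits what you call ``the main obstacle.'' It first replaces $I$ by $J$ (the intersection of the toral components in~\eqref{eqn:important components}); this is legitimate since $\beta$ avoids the Andean arrangement of $I$. It then invokes the single fact that \emph{homogenization commutes with primary decomposition}: the primary components of $\rho(J)$ are exactly the $\rho(\sC)$ for $\sC$ a (toral) component of $J$, and each $\rho(\sC)$ is again toral. Consequently $\rho(J)$ has \emph{no} Andean components at all, so the Andean arrangement of $\rho(J)$ is empty and there is nothing to avoid. The genericity of $\beta_0$ is used only to invoke item~\eqref{item:binomial:directSum} and split $\rho(M,\beta_0)$ as a direct sum of the $D_{\breve X}/(\rho(\sC)+\<E_\rho-(\beta_0,\beta)\>)$, after which~\eqref{item:binomial:regHol} applies to each summand.

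Your proposal instead leaves $I$ intact and attempts to classify the associated primes of $\rho(I)$ by hand, worrying about ``components of $\rho(I)$ that are introduced by, or become embedded during, the homogenization'' and ``new, unexpected Andean component[s] along $\{\dx{0}=0\}$.'' These concerns evaporate once one knows that $\rho$ commutes with primary decomposition: no new components appear, and toral components stay toral. So your outline is not incorrect, but the bookkeeping you flag as the crux is in fact a one-line citation; with that fact in hand, the reduction to $J$ makes the Andean arrangement empty and your case analysis becomes unnecessary.
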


\begin{proof}
Since $\beta$ is not in the Andean arrangement of $I$, we may assume
that all primary components of $I$ are toral.  
Since $\beta_0$ is generic and $\rho$ commutes with taking primary decompositions, $\rho(I)$ also has only toral primary
components.  
Further, 
by Theorem~\ref{thm:main result for binomial D-mods}.\eqref{item:binomial:directSum}, 
$\rho(M,\beta_0)$ is isomorphic to 
$\bigoplus D_{\breve{X}}/(\rho(\sC)+\<E_\rho-(\beta_0,\beta)\>)$, 
where the direct sum is over the toral primary components $\sC$ of $I$.  
Since each ideal $\rho(\sC)$ is $\ZZ$-graded, the result now follows from 
Theorem~\ref{thm:main result for binomial D-mods}.\eqref{item:binomial:regHol}.
\end{proof}

We now generalize~\cite[Theorem~7.3]{christine-thesis}, which was first stated for 
$A$-hypergeometric systems. 

\begin{proposition}
\label{prop:binomial homog rank preserved}
Let $I\subseteq\CC[\del_x]$ be an $A$-graded binomial ideal, and set $M = D_\barX/(I+\<E-\beta\>)$. 
If $\beta$ does not lie in the Andean arrangement of $I$ and
$\beta_0\in\CC$ is generic as in Proposition~\ref{prop:binomial rho
  preserve reg hol}, then  
\[
\rank(M) = \rank(\rho(M,\beta_0)).
\]
\end{proposition}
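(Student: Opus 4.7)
The strategy is to identify each rank with the dimension of a space of formal Nilsson solutions in a compatible direction, and then construct a $\CC$-linear bijection between these two spaces via the homogenization operation on series.

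First, I would set up the two Nilsson-solution identifications. By Proposition~\ref{prop:binomial rho preserve reg hol}, $\rho(M,\beta_0)$ is regular holonomic, so Theorem~\ref{thm:solutions-of-regular-holonomic} gives
\[
\rank(\rho(M,\beta_0)) \;=\; \dim_\CC \sN_{(0,w)}\bigl(\rho(M,\beta_0)\bigr)
\]
for any generic weight vector of the form $(0,w)$, obtained as a perturbation along the ray through $(1,\boldzero_n)\in\RR^{n+1}$. Since $\beta$ avoids the Andean arrangement, $M$ is holonomic by Theorem~\ref{thm:main result for binomial D-mods}; invoking Theorem~\ref{thm:nilsson binomial enough} (the binomial version of the fact that Nilsson series suffice, generalizing \cite{nilsson,sw-irred}), one obtains, for a generic binomial weight vector $w$ chosen as a perturbation of a vector $w_0 \in \RR^n_{>0}$,
\[
\rank(M) \;=\; \dim_\CC \sN_w(M).
\]
The choice $w_0 \in \RR^n_{>0}$ ensures that every basic Nilsson solution of $M$ in direction $w$ has support contained in the half-space $\{u : |u|\geq 0\}$, placing us in the regime where the homogenization construction of \cite[\S 3]{nilsson} applies.

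Next, I would construct the bijection $\sN_w(M) \cong \sN_{(0,w)}(\rho(M,\beta_0))$. Given a basic Nilsson solution $\phi$ of $M$ in direction $w$, the homogenization $\rho(\phi)$ defined in \cite[\S 3]{nilsson} is a basic Nilsson solution of $\rho(M,\beta_0)$ in direction $(0,w)$, as recorded in the paragraph preceding Proposition~\ref{prop:binomial rho preserve reg hol}. The inverse sends a basic Nilsson solution $\psi$ of $\rho(M,\beta_0)$ in direction $(0,w)$ to its dehomogenization $\psi|_{x_0=1}$; since every basic Nilsson solution of $\rho(M,\beta_0)$ is $\rho(A)$-graded, the $x_0$-dependence of $\psi$ is determined by its $\rho(A)$-degree, so $\rho$ and dehomogenization are mutually inverse on basic Nilsson solutions. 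Genericity of $\beta_0$ in the sense of Proposition~\ref{prop:binomial rho preserve reg hol} is needed to rule out coincidences among the starting $\rho(A)$-degrees of basic Nilsson solutions of $\rho(M,\beta_0)$, and hence guarantees that distinct basic solutions remain distinct after dehomogenization.

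Combining the two identifications with this bijection gives
\[
\rank(M) \;=\; \dim_\CC \sN_w(M) \;=\; \dim_\CC \sN_{(0,w)}\bigl(\rho(M,\beta_0)\bigr) \;=\; \rank(\rho(M,\beta_0)),
\]
as desired. The principal technical obstacle is verifying that homogenization and dehomogenization do map the respective spaces of basic Nilsson solutions into each other: in the forward direction one must check annihilation by the full ideal $\rho(I)+\<E_\rho-(\beta_0,\beta)\>$, not just by the Euler operators, which is handled by the fact that $\rho(I)$ is the $x_0$-homogenization of $I$ together with the support condition $|u|\geq 0$; in the backward direction, genericity of $\beta_0$ is essential to ensure that dehomogenization does not collapse distinct basic series and that the support and polynomial-degree conditions of Definition~\ref{def:basic nilsson} are preserved. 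These are the adaptations of the corresponding steps in \cite[Theorem~7.3]{christine-thesis} needed to pass from $A$-hypergeometric systems to arbitrary binomial $D$-modules.
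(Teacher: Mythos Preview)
Your argument is circular. You invoke Theorem~\ref{thm:nilsson binomial enough} to identify $\rank(M)$ with $\dim_\CC \sN_w(M)$, but in the paper Theorem~\ref{thm:nilsson binomial enough} is deduced \emph{from} Proposition~\ref{prop:binomial homog rank preserved}: the text immediately preceding Theorem~\ref{thm:nilsson binomial enough} says explicitly that Propositions~\ref{prop:binomial rho preserve reg hol} and~\ref{prop:binomial homog rank preserved} are the ingredients needed to carry over the proof of \cite[Theorem~6.4]{nilsson} to the binomial setting. So you cannot use Theorem~\ref{thm:nilsson binomial enough} here. This is not a cosmetic issue: in the $A$-hypergeometric case the logical order in \cite{nilsson,christine-thesis} is the same (rank preservation under homogenization is proved first, and only then is the Nilsson-dimension-equals-rank statement established), so there is no independent route to $\dim_\CC\sN_w(M)=\rank(M)$ that you could substitute.

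The paper's proof avoids series entirely. It reduces to the case where $I$ is the intersection of the relevant toral primary components, extends $0\to\CC[\del_x]/I\to\bigoplus\CC[\del_x]/\sC$ to a primary resolution, and then applies Euler--Koszul homology and the spectral sequence argument of \cite[Theorem~4.5]{monomialGKZ}. The point is that this homological machinery and its rank numerics are compatible with the homogenization $\rho$, and the toric base case is already \cite[Theorem~7.3]{christine-thesis}. If you want to fix your approach, you would need to supply an independent proof that $\dim_\CC\sN_w(M)=\rank(M)$ for holonomic binomial $D$-modules that does not pass through rank preservation under $\rho$; absent that, the homological route is what is available.
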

\begin{proof} 
We may assume that $I$ is equal to the intersection of the $\sC$ in
\eqref{eqn:important components}.  
By Theorem~\ref{thm:main result for binomial
  D-mods}.\eqref{item:binomial:hol:arr} and the genericity of
$\beta_0$, $M$ and $\rho(M,\beta_0)$ are holonomic and thus of finite
rank.  
Given the collection $\{\sC\}$ of toral primary components of $I$, we
may extend $0\to \CC[\del_x]/I \to \bigoplus \CC[\del_x]/\sC$ to a
\emph{primary resolution} of $\CC[\del_x]/I$,
see~\cite[\S4]{monomialGKZ}.  
To compute the rank of $M$, we then apply Euler--Koszul homology to
this resolution and follow the resulting spectral sequence, as in the
proof of~\cite[Theorem~4.5]{monomialGKZ}.  
That argument and the fact that the theorem has been proven for
$A$-hypergeometric systems in~\cite[Theorem~7.3]{christine-thesis}
imply that this procedure and its associated numerics are compatible
with $\rho$,  
and this compatibility yields the desired result. 
\end{proof}

With the definitions of homogenization and 
Propositions~\ref{prop:binomial rho preserve reg hol}
and~\ref{prop:binomial homog rank preserved} in hand, the necessary
ingredients are in place to apply the original proof 
of~\cite[Theorem~6.4]{nilsson} to arbitrary holonomic binomial $D$-modules. 
This result, appearing now in Theorem~\ref{thm:nilsson binomial
  enough}, provides a method to compute the rank of these modules via
certain Nilsson solutions.   
Note also that the statement runs in parallel to 
Theorem~\ref{thm:solutions-of-regular-holonomic}, without requiring
regular holonomicity.  

\begin{theorem}
\label{thm:nilsson binomial enough}
Let $I\subseteq\CC[\del_x]$ be an $A$-graded binomial ideal, and set
$M = D_\barX/(I+\<E-\beta\>)$.  
Assume that $\beta$ does not lie in the Andean arrangement of $I$. 
If $w \in \RR^n$ is a generic binomial weight vector of $M$ that is a
perturbation of $\boldone_n$, then   
\[
\dim_\CC ( \sN_w(M) ) = \rank(M).
\] 
Further, there exists an open set $U\subseteq\barX$ such that the
basic Nilsson solutions of $M$ in the direction of $w$ simultaneously
converge at each $p\in U$, and as such, they form a basis for
$\Sol_p(M)$.  
\qed
\end{theorem}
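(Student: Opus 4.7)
The plan is to mimic the proof of \cite[Theorem~6.4]{nilsson}, using the homogenization operator $\rho$ as a bridge between $M$ and a regular holonomic module where Theorem~\ref{thm:solutions-of-regular-holonomic} applies. Concretely, pick $\beta_0\in\CC$ generic in the sense of Proposition~\ref{prop:binomial rho preserve reg hol}, and form $\rho(M,\beta_0)$. By that proposition, $\rho(M,\beta_0)$ is regular holonomic, and by Proposition~\ref{prop:binomial homog rank preserved}, $\rank(\rho(M,\beta_0))=\rank(M)$.

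The next step is to relate basic Nilsson solutions on the two sides. Since $w$ is a perturbation of $\boldone_n$, the lift $(0,w)\in\RR^{n+1}$ is a generic weight vector for $\rho(M,\beta_0)$, and the cone $\Sigma$ from Definition~\ref{def:generic:weight} can be chosen so that every $u$ in the support of any basic Nilsson solution in direction $w$ satisfies $|u|\ge 0$ (this is the content of perturbing from $\boldone_n$). As noted in the paragraph preceding Proposition~\ref{prop:binomial rho preserve reg hol}, homogenization then sends a basic Nilsson solution $\phi$ of $M$ in direction $w$ to a basic Nilsson solution $\rho(\phi)$ of $\rho(M,\beta_0)$ in direction $(0,w)$; this defines a $\CC$-linear map $\rho_*\colon \sN_w(M)\to\sN_{(0,w)}(\rho(M,\beta_0))$. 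Conversely, dehomogenization (setting $x_0=1$ in a series of the right torus-homogeneous degree) sends basic Nilsson solutions of $\rho(M,\beta_0)$ in direction $(0,w)$ to basic Nilsson solutions of $M$ in direction $w$, and the two operations are mutually inverse on supports confined to $|u|\ge 0$, exactly as in \cite[\S3]{nilsson}.

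Now, by Theorem~\ref{thm:solutions-of-regular-holonomic} applied to $\rho(M,\beta_0)$, one has
\[
\dim_\CC \sN_{(0,w)}(\rho(M,\beta_0)) = \rank(\rho(M,\beta_0)) = \rank(M),
\]
and the basic Nilsson solutions in direction $(0,w)$ form a basis of $\Sol_{\tilde p}(\rho(M,\beta_0))$ on some open set in $\breve X$. Pulling back along the section $x_0=1$ identifies this space with $\sN_w(M)$ and transfers the convergence statement to an open $U\subseteq\barX$. This yields the asserted equality $\dim_\CC\sN_w(M)=\rank(M)$ together with the convergence and basis statements for $\Sol_p(M)$ on $U$.

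The main obstacle is the rigorous matching of supports under $\rho_*$: one must verify that every basic Nilsson solution of $\rho(M,\beta_0)$ in the direction $(0,w)$ actually arises as the homogenization of a basic Nilsson solution of $M$, i.e.\ that the $\CC$-linear map $\rho_*$ is a bijection. For $A$-hypergeometric systems this is carried out in \cite[\S3]{nilsson} by comparing the dual cones attached to $w$ and to $(0,w)$ and exploiting that $(0,w)$-supports of Nilsson solutions of $\rho(M,\beta_0)$ live in $\ker_\ZZ(\rho(A))$, which projects isomorphically onto $\ker_\ZZ(A)$. In the binomial setting the same argument goes through verbatim once $\beta_0$ is chosen so that $\rho(M,\beta_0)$ has no Andean components interfering (guaranteed by Proposition~\ref{prop:binomial rho preserve reg hol}) and primary components of $\rho(I)$ correspond bijectively to those of $I$; Proposition~\ref{prop:binomial homog rank preserved} then ensures no rank is ``lost in translation'', so $\rho_*$ is forced to be an isomorphism by a dimension count.
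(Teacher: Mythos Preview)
Your proposal is correct and follows precisely the approach the paper indicates: the paper does not give an independent proof but simply notes that, with Propositions~\ref{prop:binomial rho preserve reg hol} and~\ref{prop:binomial homog rank preserved} in hand, the original proof of \cite[Theorem~6.4]{nilsson} goes through for arbitrary holonomic binomial $D$-modules. You have faithfully unpacked that argument---homogenize, apply Theorem~\ref{thm:solutions-of-regular-holonomic} to the regular holonomic $\rho(M,\beta_0)$, and dehomogenize---and your final paragraph correctly identifies the one point needing care (bijectivity of $\rho_*$) and resolves it via the dimension count against $\rank(\rho(M,\beta_0))=\rank(M)$.
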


Finally, we generalize~\cite[Proposition~7.4]{nilsson} in
Theorem~\ref{thm:nilsson binomial less}.  
Combining this result with
Theorem~\ref{thm:solutions-of-regular-holonomic}, we obtain a new   
criterion for the regular holonomicity of binomial $D$-modules. 
In particular, instead of considering a family of derived solutions,
regularity of a binomial $D$-module can be tested through a single
collection of Nilsson solutions.  

\begin{theorem}
\label{thm:nilsson binomial less}
Let $I\subseteq\CC[\del_x]$ be an $A$-graded binomial ideal such that
$M = D_\barX/(I+\<E-\beta\>)$ is not regular holonomic. If $\beta$
does not lie in the Andean arrangement of $I$,  
then there exists a generic binomial weight vector $w \in \RR^n$ of $M$ such that 
$\dim_\CC ( \sN_w(M) ) < \rank(M)$. 
\end{theorem}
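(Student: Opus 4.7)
The plan is to prove the contrapositive along the lines of \cite[Proposition~7.4]{nilsson}, which settles the $A$-hypergeometric case, promoting it to the binomial setting via the homogenization machinery developed in \S\ref{subsec:DMMregNilsson}. Assume that $\dim_\CC\sN_w(M) = \rank(M)$ for \emph{every} generic binomial weight vector $w$ of $M$; the goal is to conclude that $M$ is regular holonomic, contradicting the hypothesis of the theorem.

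First, pick $\beta_0\in\CC$ generic in the sense of Proposition~\ref{prop:binomial rho preserve reg hol}, so that the homogenized module $\rho(M,\beta_0)$ is regular holonomic; by Proposition~\ref{prop:binomial homog rank preserved}, one has $\rank(\rho(M,\beta_0)) = \rank(M)$. For any generic binomial weight $w$ of $M$ and sufficiently small $W_0 > 0$, the weight $W = (W_0,w)\in\RR^{n+1}$ is a generic binomial weight for $\rho(M,\beta_0)$. I will construct a dehomogenization map
\[
\delta_W \colon \sN_W(\rho(M,\beta_0)) \longrightarrow \sN_w(M), \qquad \Phi(x_0,x)\mapsto \Phi(1,x),
\]
and its partial inverse, a homogenization map $\rho_W$, along the lines of \cite[Proposition~3.17]{nilsson}. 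The $\rho(A)$-gradedness of $\rho(I)+\<E_\rho-(\beta_0,\beta)\>$ constrains the support of any basic Nilsson solution of $\rho(M,\beta_0)$ to lie in $\ker_\ZZ(\rho(A)) = \{(u_0,u) : u_0 = -|u|,\ u\in\ker_\ZZ(A)\}$ (cf.\ Remark~\ref{rem:Agrd nilsson}), so the projection to the last $n$ coordinates, combined with the leading-term normalization in $x_0$, produces a well-defined injective linear map; the inequality $W\cdot(u_0,u)\ge 0$ becomes $w\cdot u \ge W_0|u|$, which is compatible with the support condition for basic Nilsson solutions of $M$ in direction $w$ for appropriately chosen $W_0$.

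Now by Theorem~\ref{thm:solutions-of-regular-holonomic}, applied to the regular holonomic $\rho(M,\beta_0)$, one obtains $\dim_\CC\sN_W(\rho(M,\beta_0)) = \rank(M)$ for every generic $W$; combined with the standing hypothesis $\dim_\CC\sN_w(M)=\rank(M)$, this forces $\delta_W$ to be an isomorphism for each compatible pair $(w,W)$. Varying $W$, every basic Nilsson solution of $M$ in every generic direction arises as the dehomogenization of a basic Nilsson solution of the regular holonomic module $\rho(M,\beta_0)$, whose solutions converge on an open set by Theorem~\ref{thm:solutions-of-regular-holonomic}. The convergence of all basic Nilsson solutions of $M$ in every generic direction then forces the formal-to-analytic solutions comparison morphism for $M$ to be an isomorphism, which is the definition of regular holonomicity recalled in \S\ref{subsec:Dmods}. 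This contradicts the assumption that $M$ is not regular holonomic.

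The main obstacle is twofold. First, the support analysis must be carried out with enough care that the dehomogenization $\delta_W$ genuinely lands in the space of \emph{basic} Nilsson series of $M$ in direction $w$ (and not merely in formal series), which requires a delicate adjustment of $W_0$ to the cone of genericity of $w$. Second, and more substantively, the translation of ``full Nilsson basis in every generic direction'' into regular holonomicity of $M$ must be made rigorous; by Theorem~\ref{thm:main result for binomial D-mods}.\eqref{item:binomial:regHol}, irregularity of $M$ stems from a toral primary component of $I$ in \eqref{eqn:important components} that fails to be standard $\ZZ$-graded, and the plan is to show that the corresponding Gevrey-type formal solutions of $M$ persist as \emph{nonconvergent} formal Nilsson series in some direction, so that no isomorphism $\delta_W$ can exist in that direction. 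Pinning down a specific such direction $w$ adapted to the offending primary component is the heart of the argument.
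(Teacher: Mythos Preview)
Your overall strategy matches the paper's, which is extremely terse: it simply says the proof of \cite[Proposition~7.4]{nilsson} goes through once one knows that $\rho$ commutes with taking primary decompositions and that Propositions~\ref{prop:binomial rho preserve reg hol} and~\ref{prop:binomial homog rank preserved} supply the binomial analogues of the toric ingredients. Your outline is an expanded reconstruction of exactly that argument, and your use of the homogenization/dehomogenization maps and the two propositions is correct.

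That said, the argument you give \emph{before} your ``obstacles'' paragraph contains a real gap, and you are right to flag it. The sentence ``the convergence of all basic Nilsson solutions of $M$ in every generic direction then forces the formal-to-analytic solutions comparison morphism for $M$ to be an isomorphism'' is not a valid inference: regular holonomicity is a derived statement about restrictions to all curves (including behavior at infinity), and is not implied by convergence of Nilsson expansions in a family of directions at the origin. The contrapositive framing therefore stalls at precisely this point. The fix you sketch in the second obstacle---use Theorem~\ref{thm:main result for binomial D-mods}.\eqref{item:binomial:regHol} to locate a toral primary component in~\eqref{eqn:important components} that is not standard $\ZZ$-graded, and then exhibit a specific direction $w$ adapted to that component in which the dehomogenization map $\delta_W$ fails to be surjective---is the correct route, and it is exactly what the paper is pointing to with the phrase ``$\rho$ commutes with taking primary decompositions'': one reduces along the primary decomposition to a single offending component and then invokes the $A$-hypergeometric argument verbatim. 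Your write-up would be cleaner if you abandoned the contrapositive and argued directly in this way.
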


\begin{proof}
The original proof of~\cite[Proposition~7.4]{nilsson} can be applied, 
since $\rho$ is commutes with taking primary decompositions. Note that
Propositions~\ref{prop:binomial rho preserve reg hol}
and~\ref{prop:binomial homog rank preserved} provide the binomial generalizations of the originally toric results needed in this
proof.  
\end{proof}

\subsection{Irreducible monodromy representation} 
\label{subsec:binomialMonodromy}

We now characterize when a binomial $D$-module has reducible monodromy representation.
To achieve this, we extend the results \cite[Theorems~3.1 and~3.2]{sw-irred} (see
also \cite{beukers-irred,saito-irred}) on $A$-hypergeometric systems to arbitrary binomial $D$-modules. 

Given $\sigma\subseteq\{1,\dots,n\}$, let $A_\sigma$ denote
the submatrix of $A$ given by the columns of $A$ that lie in $\sigma$, and 
let $I_{A_\sigma}\subseteq\CC[\dx{i}\mid i\in\sigma]$ denote the toric
ideal given by $A_\sigma$ in the variables corresponding to $\sigma$.  

A \emph{face} $G$ of $A$, denoted $G\preceq A$, is a subset $G$ of the
column set of $A$ such that there is a linear functional $\phi_G\colon
\ZZ A\to \ZZ$ that vanishes on $G$ and is positive on any element of
$A\minus G$.  
If $G\preceq A$, then the parameter $\beta\in\CC^d$ is
\emph{$G$-resonant} if $\beta\in\ZZ A+\CC G$.  
If $\beta$ is $H$-resonant for all faces $H$ properly containing $G$,
but not for $G$ itself, then $G$ is called a \emph{resonance center
  for $\beta$}.  
It is said that $A$ is a \emph{pyramid} over a face $G$ if 
$\vol_{\ZZ G}(G) = \vol_{\ZZ A}(A)$, where volume is a normalized (or
simplicial) volume computed with respect to the given ambient
lattices.  

\begin{theorem}
\label{thm:binomial irred}
Let $I$ be an $A$-graded binomial ideal in $\CC[\del_x]$ and suppose
that $\beta\in\CC^d$ does not lie in the Andean arrangement of
$I$. The binomial $D_\barX$-module $D_\barX/( I+\<E-\beta\>)$ has
irreducible monodromy representation if and only if the following 
conditions hold: 
\begin{enumerate}
\vspace{-2mm}
\item 
\label{item:monodromy:1}
For some $\sigma\subseteq \{1,\dots,n\}$, the intersection of the
toral components $\sC$ of $I$ for which $\beta\in\qdeg(\sC)$  
equals  $I_{A_\sigma} + \<\dx{i}\mid i\notin\sigma\>$. 
\item  
\label{item:monodromy:2}
For any $\sigma$ as in~\eqref{item:monodromy:1}, there is a unique
resonance center $G\preceq A_\sigma$ of $\beta$, and $A_\sigma$ is a pyramid over $G$. 
\end{enumerate}
\end{theorem}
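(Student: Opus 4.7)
The plan is to reduce the irreducibility question to the $A$-hypergeometric case, for which the analogous criterion is established in \cite{sw-irred,beukers-irred,saito-irred}, using the primary-decomposition tools from Part~II.

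Because monodromy irreducibility is a property of the generic fiber $\CC(x)\otimes_{\CC[x]}M$, my first step would be to localize and invoke Theorem~\ref{thm:main result for binomial D-mods}.\eqref{item:binomial:directSum} (together with a harmless auxiliary generic localization to handle parameters with $\beta\in\qdeg(P_I)$). Since $\beta$ avoids the Andean arrangement, this produces
\[
\CC(x)\otimes_{\CC[x]}M \ \cong \ \bigoplus_\sC \CC(x)\otimes_{\CC[x]}\frac{D_\barX}{\sC+\<E-\beta\>},
\]
where the sum ranges over toral primary components $\sC$ of $I$ with $\beta\in\qdeg(\sC)$. Consequently $M$ has irreducible monodromy if and only if exactly one summand is nonzero and that summand is itself monodromy-irreducible.

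Next I would argue that the unique contributing component must equal its associated prime $\pp=I_{A_\sigma}+\<\dx{i}\mid i\notin\sigma\>$ itself, not a strictly ``fatter'' primary $\sC\subsetneq\pp$: the quotient $\CC[\del_x]/\sC\onto\CC[\del_x]/\pp$ is an $A$-graded surjection whose kernel retains $\beta$ in its quasidegree set, and the Euler--Koszul functor of \cite{DMM/D-mod} translates this into a proper nonzero $D$-submodule of $D_\barX/(\sC+\<E-\beta\>)$ that survives generic localization. Combined with primality of $\pp$ (which forces the intersection of the contributing toral components to equal $\pp$ only when one of those components is $\pp$ itself), this yields condition~\eqref{item:monodromy:1} and reduces the analysis to $D_\barX/(\pp+\<E-\beta\>)$. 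On this quotient, the relations $\dx{i}\cdot 1 =0$ for $i\notin\sigma$ collapse the Euler relations to the $\sigma$-coordinates, identifying $M$ (up to the innocuous free factor $\CC[x_i:i\notin\sigma]$, which does not affect monodromy) with the pullback to $\barX$ of the $A_\sigma$-hypergeometric system $H_{A_\sigma}(\beta_\sigma)$, where $\beta_\sigma$ is the class of $\beta$ in $\CC A_\sigma$ (well defined since $\beta\in\qdeg(\pp)\subseteq \CC A_\sigma$). Invoking \cite[Theorems~3.1 and~3.2]{sw-irred} then translates monodromy irreducibility of $H_{A_\sigma}(\beta_\sigma)$ precisely into condition~\eqref{item:monodromy:2}, and reversing the chain of implications yields the converse.

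The main obstacle I expect is the ``fat component'' step: verifying rigorously that a strict $A$-graded primary inclusion $\sC\subsetneq\pp$ produces, after Euler--Koszul and generic localization, a nonzero $D_\barX$-submodule whose $\CC(x)$-rank is strictly smaller than that of the ambient summand. The tools of \cite{DMM/D-mod,DMM/primdec} are designed for exactly this quasidegree bookkeeping, but care is required to ensure that the candidate subquotient contributes faithfully to the generic fiber at the specific $\beta$ in question.
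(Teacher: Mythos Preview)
Your overall strategy---reduce to the $A$-hypergeometric case and invoke \cite{sw-irred}---matches the paper's endpoint, but your reduction is more circuitous and has a gap.

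The paper does \emph{not} use the direct-sum decomposition of Theorem~\ref{thm:main result for binomial D-mods}.\eqref{item:binomial:directSum}. That result carries the extra hypothesis $\beta\notin\qdeg(P_I)$, which is not assumed in Theorem~\ref{thm:binomial irred}, and your ``harmless auxiliary generic localization'' does not explain how to discharge it: $\beta$ is a fixed parameter, not something you may perturb, and there is no evident reason the direct sum should persist on $\CC(x)\otimes_{\CC[x]} M$ when $\beta\in\qdeg(P_I)$.

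Instead the paper simply replaces $I$ by the intersection $J$ of the toral components $\sC$ with $\beta\in\qdeg(\sC)$ (this leaves $M$ unchanged, cf.\ \cite[Proposition~6.4]{DMM/D-mod}) and argues directly: if $J$ is not prime, pick any associated prime $\pp=I_{A_\sigma}+\<\dx{i}\mid i\notin\sigma\>$ (after a rescaling of variables). The surjection $M\onto D_\barX/(\pp+\<E-\beta\>)$ has nonzero image and nonzero kernel over $\CC(x)$, exhibiting reducibility immediately. This single stroke handles both the multi-component case and the ``fat primary component'' case you flag as the main obstacle---no Euler--Koszul quasidegree bookkeeping is needed at all. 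Once $J$ is prime, the passage to $H_{A_\sigma}$ and the appeal to \cite[Theorems~3.1 and~3.2]{sw-irred} (with \cite[Lemma~2.9]{sw-irred} supplying the uniqueness of the resonance center under the pyramid condition) proceed essentially as you outline.
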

\begin{proof} 
We may assume that $I$ is equal to the intersection of the toral
components $\sC$ of $I$ for which $\beta\in\qdeg(\sC)$.  
If $I$ is not prime, pick an associated prime of $I$. After possibly
rescaling the variables, since $\beta$ does not lie in the Andean
arrangement of $I$, such an 
associated prime is of the form $I_{A_\sigma}+\<\dx{i} \mid i \notin
\sigma\>$ for some $\sigma\subseteq \{1,\dots, n\}$. Let 
$N = D_\barX/( I_{A_\sigma}+\<\dx{i} \mid i \notin
\sigma\>+\<E-\beta\>)$. Then $N$ is a submodule of 
$M$ such that $\CC(x) \otimes_{\CC[x]} N$ is a nonzero
proper submodule of $\CC(x) \otimes_{\CC[x]} M$. 
Thus $M$ always has reducible monodromy representation in this case.

We may now assume that $I$ is prime. Then since $\beta$ does not lie
in the Andean arrangement of $I$, there exists a subset $\sigma
\subseteq \{1,\dots,n\}$ 
such that $I=I_{A_\sigma}+\<\dx{i} \mid i \notin \sigma\>$, after
possibly rescaling the variables. 
Since the pyramid condition implies the uniqueness
of a resonance center $G$ of $\beta$ by~\cite[Lemma~2.9]{sw-irred}, 
we have reduced the proof of the statement for binomial $D$-modules to
the $A$-hypergeometric setting, which was already proven  
in~\cite[Theorems~3.1 and~3.2]{sw-irred}. 
\end{proof}

\section{Torus invariants and binomial \texorpdfstring{$D_\barX$}{DbarX}-modules}
\label{sec:binomial hol Pi}

In this section, we strengthen our results on the transfer of (regular) holonomicity 
through $\Pi^{\tilde A}_B$ from Theorem~\ref{thm:transfer theorem},
still following Convention~\ref{conv:BtildeACK}, in the case that the input is a binomial $D_\barX$-module. 
Remark~\ref{rem:charVar Pi binomial} provides an explicit description
of the characteristic variety 
of $\Pi^{\tilde A}_B$ applied to a regular holonomic binomial
$D_\barX$-module.  

\begin{theorem}
\label{thm:binomial quotients}
Let $I \subseteq \CC[\del_x]$ be an 
$A$-graded binomial ideal. 
Then the following hold for the binomial $D_\barX$-module $M = D_\barX/(I+\<E-\beta\>)$. 
\vspace{-2mm}
\begin{enumerate}
\item 
\label{thm:binomial quotients:Lhol}
The binomial $D_\barX$-module $M$ is holonomic if and only if
  $\Pi^{\tilde A}_B(M)$ is holonomic.  
\item 
\label{thm:binomial quotients:rank}
  If $M$ is holonomic, then the rank of $\Pi^{\tilde A}_B(M)$ is equal
  to $[\ker_{\ZZ}(A):\ZZ B]\cdot\rank(M)$.  
\item 
\label{thm:binomial quotients:reg hol}
  The module $M$ is regular holonomic if and only if $\Pi^{\tilde
  A}_B(M)$ is regular holonomic.  
\end{enumerate}
\end{theorem}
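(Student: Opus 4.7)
The forward implications in (1) and (3) are Theorem~\ref{thm:transfer theorem}\eqref{item:L-hol barX} and \eqref{item:regular-hol barX} respectively, so the task is to establish the converses and the rank formula (2). Since every binomial $D_\barX$-module $M = D_\barX/(I+\<E-\beta\>)$ lies in $\Tirr(\barX,\beta)$, the refined machinery of \S\ref{sec:E-beta} applies, and the plan is to combine it with the binomial-specific structural results in Theorem~\ref{thm:main result for binomial D-mods} and with the Nilsson-series criteria of Theorems~\ref{thm:nilsson binomial enough} and \ref{thm:nilsson binomial less}.

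For the converse of (1), assume $\Pi^{\tilde A}_B(M)$ is $L'$-holonomic and hence holonomic. It suffices to show that $M$ itself is holonomic, for then Theorem~\ref{thm:main result for binomial D-mods}\eqref{item:binomial:Lhol} upgrades this to $L$-holonomicity. Arguing by contradiction, suppose $M$ is not holonomic; Theorem~\ref{thm:main result for binomial D-mods}\eqref{item:binomial:Lhol} supplies a component of $\charVar(M) \cap T^*X$ of dimension strictly greater than $n$. After reducing to the case in which the columns of $B$ generate $\ker_\ZZ(A)$ as a lattice, via the finite \'etale cover $\mu_K$ as in Lemma~\ref{lem:B=C}, Proposition~\ref{prop:charVar X} identifies $\charVar(\Delta^{\tilde A}_B(i^*M))$ as the geometric quotient of $\charVar(i^*M)$ by the free $T$-action, producing a component of dimension greater than $m = n-d$. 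This contradicts the holonomicity of $\Delta^{\tilde A}_B(i^*M)$, and hence of $\Pi^{\tilde A}_B(M) = j_+\Delta^{\tilde A}_B(i^*M)$.

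For (2), the holonomicity of $M$ together with Theorem~\ref{thm:nilsson binomial enough} supplies a generic binomial weight vector $w$ perturbing $\boldone_n$ for which the basic Nilsson solutions of $M$ in direction $w$ form a basis of $\Sol_p(M)$ of cardinality $\rank(M)$ at each $p$ in some open set $U \subseteq X$. Shrinking $U$ so that $p^B$ is a generic nonsingular point of $\Pi^{\tilde A}_B(M)$, Theorem~\ref{thm:solutions E} transports these solutions to $\Sol_{p^B}(\Pi^{\tilde A}_B(M))$ through the $[\ker_\ZZ(A):\ZZ B]$ distinct sections of $\mu_B$. Since these sections differ by the action of $\ker_\ZZ(A)/\ZZ B$ via roots of unity on the monomial exponents appearing in each Nilsson expansion, the resulting images are linearly independent at generic $p$; combined with the upper bound on $\rank(\Pi^{\tilde A}_B(M))$ arising from the explicit presentation in Corollary~\ref{coro:quotient via dkap}, this yields the equality $\rank(\Pi^{\tilde A}_B(M)) = [\ker_\ZZ(A):\ZZ B]\cdot\rank(M)$.

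For the converse of (3), assume $\Pi^{\tilde A}_B(M)$ is regular holonomic, so in particular holonomic, and hence $M$ is holonomic by (1). If $M$ were not regular holonomic, Theorem~\ref{thm:nilsson binomial less} would furnish a generic binomial weight $w$ with $\dim_\CC \sN_w(M) < \rank(M)$. Adapting the section-based Nilsson transfer of (2) to formal series, each basic Nilsson solution of $M$ in direction $w$ corresponds to a basic Nilsson solution of $\Pi^{\tilde A}_B(M)$ in a matching generic direction $w'$ on $\barZ$, with the $[\ker_\ZZ(A):\ZZ B]$ sections again contributing independent series. Together with the rank formula from (2), this gives $\dim_\CC \sN_{w'}(\Pi^{\tilde A}_B(M)) < \rank(\Pi^{\tilde A}_B(M))$, contradicting Theorem~\ref{thm:solutions-of-regular-holonomic}. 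The principal obstacle in both (2) and (3) is the rigorous verification of the linear independence and exact count of the section-indexed Nilsson contributions—in the convergent setting for the rank formula and in the formal setting for the regularity reduction—together with the precise matching of a generic cone for Nilsson expansions on $\barX$ to a corresponding generic cone on $\barZ$ via $\mu_B$, for which the explicit $\dkap$-presentations of Corollaries~\ref{coro:invariants via dkap}--\ref{coro:quotient via dkap} and the support analysis of \S\ref{subsec:DMMregNilsson} will be the essential inputs.
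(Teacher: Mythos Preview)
Your proposal is correct and follows essentially the same approach as the paper: the converse of (1) via a dimension comparison of characteristic varieties over $T^*X$ (the paper does this by constructing an explicit filtered injection $\psi_{B,\kappa}\colon D_Z \hookrightarrow [D_X]^T/\langle E-\beta\rangle$ valid for arbitrary $B$, rather than reducing to the saturated case and invoking Proposition~\ref{prop:charVar X}), the rank formula (2) via Theorem~\ref{thm:nilsson binomial enough} feeding into Theorem~\ref{thm:solutions E}, and the converse of (3) via Theorem~\ref{thm:nilsson binomial less} combined with the rank formula and Theorem~\ref{thm:solutions-of-regular-holonomic}. The obstacles you flag---independence of the section-indexed Nilsson contributions and matching of generic cones under $w \mapsto \ell = wB$---are handled in the paper by the covering-space interpretation of $\mu_K$ (which makes the sum over sections a direct sum) and by a one-line perturbation remark, respectively.
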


\begin{corollary}
\label{cor:binomial quotients}
Let $I \subseteq \CC[\del_x]$ be an $A$-graded binomial ideal. 
The statements of Theorem~\ref{thm:main result for binomial D-mods}
hold if we replace the binomial $D_\barX$-module $M = D_\barX/( I+\<E-\beta\>)$ by  
$\Pi^{\tilde A}_B(M)$. 
Minor modifications are needed; for clarity, we explain the changes in four items. 
\vspace{-2mm}
\begin{enumerate}
\item[(\emph{\ref{item:binomial:Lhol}})] 
Replace $T^*X$ and $n$ respectively by $T^*Z$ and $m$. 
\item[(\emph{\ref{item:binomial:Horn-regHol}})]
  The saturated Horn $D_\barZ$-module $D_\barZ/\sHorn(B,\kappa)$ is
  (regular) holonomic if and only if the binomial $D_\barX$-module
  $D_\barX/(I(B)+\<E-A\kappa\>)$ is (regular) holonomic. In
  particular, it follows that $D_\barZ/\sHorn(B,\kappa)$ is regular
  holonomic if and only if it is holonomic and the rows of the matrix
  $B$ sum to $\boldzero_m$. 
\item[(\emph{\ref{item:binomial:minRank}})]
  The minimal rank value above must be scaled by $[\ker_{\ZZ}(A):\ZZ B]$. 
\item[(\emph{\ref{item:binomial:directSum}})] 
  If $\beta$ does not lie in the union of $\qdeg(P_I)$ with the Andean
  arrangement of $I$, then  $\Pi^{\tilde A}_B(M)$ is isomorphic to the
  direct sum over the toral primary components $\sC$ of $I$ of the
  modules $\Pi^{\tilde A}_B(D_\barX/(\sC+\<E-\beta\>))$, where $\sC$ lies in~\eqref{eqn:important components}. 
\end{enumerate}
\end{corollary}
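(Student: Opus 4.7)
The plan is to deduce each item (i)--(ix) of Theorem~\ref{thm:main result for binomial D-mods} for $\Pi^{\tilde A}_B(M)$ by combining the original statement for $M$ with the transfer results of Theorem~\ref{thm:binomial quotients}. First I observe that the binomial module $M = D_\barX/(I+\<E-\beta\>)$ lies in $\Tirr(\barX,\beta)$: for a homogeneous $P \in D_\barX$ of $A$-degree $a$ whose class $\gamma$ in $M$ is nonzero, the commutation identity $(E_i-\beta_i+a_i)P \equiv 0 \pmod{I+\<E-\beta\>}$ together with maximality of $\<E-\beta-a\>$ in $\CC[E]$ forces $\ann_{\CC[E]}(\gamma) = \<E-\beta-\deg(\gamma)\>$. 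Hence Theorem~\ref{thm:binomial quotients} applies throughout.

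Items (i), (iv), and (v) are immediate combinations: for (i) apply Theorem~\ref{thm:binomial quotients}.(1) with $L = F$ (so $L' = F$ on $D_\barZ$) and Theorem~\ref{thm:main result for binomial D-mods}.(i); for (iv), note that every admissible weight $L'$ on $D_\barZ$ arises as $(L_xB, c\boldone_m - L_xB)$ since $B$ has rank $m$, so Theorem~\ref{thm:binomial quotients}.(1) transfers $L$-holonomicity in both directions, while the $>m$-dimensional component of $\charVar^{L'}(\Pi^{\tilde A}_B(M))$ arises from the $>n$-dimensional component of $\charVar^L(M)$ (which already sits in $T^*X$ by Theorem~\ref{thm:main result for binomial D-mods}.(iv)) via the $d$-dimensional torus quotient and the coordinate-hyperplane conormal addition described by Propositions~\ref{prop:characteristic variety Pi} and~\ref{prop:charVar X}; (v) combines Theorem~\ref{thm:binomial quotients}.(3) with Theorem~\ref{thm:main result for binomial D-mods}.(v).

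Items (ii), (iii), (vii), (viii) rest on rank comparison. Theorem~\ref{thm:binomial quotients}.(2) gives $\rank(\Pi^{\tilde A}_B(M)) = [\ker_{\ZZ}(A):\ZZ B]\cdot\rank(M)$ for holonomic $M$; to obtain the converse directions in (ii) and (iii), I will upgrade this to an equality of cardinalities valid for any $M$ in $\Tirr(\barX,\beta)$. Using the tensor decomposition $N = D_T/\<t\del_t-\beta\>\otimes_\CC[N]^T$ of Lemma~\ref{lem:TcrossZ-fg} and the fact that $D_T/\<t\del_t-\beta\>$ is $\CC[t^\pm]$-free of rank one, tensoring $N$ with $\CC(t,z)$ yields $\rank_{\CC(x)}(N) = \rank_{\CC(z)}([N]^T)$ regardless of finiteness. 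The $\tilde A$-reduction of the proof of Theorem~\ref{thm:invariants-are-good} carries this identity through a monomial change of coordinates, the factor $[\ker_{\ZZ}(A):\ZZ B]$ appears from the $|\det K|$-sheeted cover $\mu_K$ as in Lemma~\ref{lem:B=C}, and $j_+$ along the open immersion $Z \hookrightarrow \barZ$ leaves the rank unchanged. With this extended formula, (ii) and (iii) follow from Theorem~\ref{thm:main result for binomial D-mods}.(ii), (iii); (vii) holds because rank minimality is determined by the same combinatorial condition on $\beta$ on both sides; (viii) is immediate.

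For (vi), Corollary~\ref{cor:Pi of lattice basis is saturated Horn} exhibits $\Pi^{\tilde A}_B(D_\barX/(I(B)+\<E-A\kappa\>))$ as a direct sum of shifted saturated Horn modules $D_\barZ/\sHorn(B,\kappa+Ck)$. Since $AC=0$, we have $A(\kappa+Ck) = A\kappa$ for all $k$, so the Andean and $\qdeg$ conditions governing (regular) holonomicity of every summand coincide; combining this with Theorem~\ref{thm:binomial quotients}.(1), (3) yields the claimed iff equivalence between $D_\barZ/\sHorn(B,\kappa)$ being (regular) holonomic and $D_\barX/(I(B)+\<E-A\kappa\>)$ being (regular) holonomic. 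The "rows of $B$ sum to $\boldzero_m$" refinement follows via Theorem~\ref{thm:main result for binomial D-mods}.(vi) from the standard observation that $I(B) = \<\del_x^{w_+} - \del_x^{w_-}\>$ is standard $\ZZ$-graded precisely when $|w_+| = |w_-|$ for each column $w$ of $B$, i.e., when each column of $B$ has components summing to zero. Item (ix) is immediate from exactness of $\Pi^{\tilde A}_B$ applied to the decomposition of $M$ provided by Theorem~\ref{thm:main result for binomial D-mods}.(ix). The principal technical obstacle will be the cardinality-level extension of the rank formula in Theorem~\ref{thm:binomial quotients}.(2) to non-holonomic $M$ needed for (ii) and (iii); once this refinement is in place, the remaining assertions assemble by direct citation.
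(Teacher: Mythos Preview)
Your overall strategy is sound, and in several places you are more careful than the paper's two-line proof. In particular, your observation that items (ii) and (iii) require an unconditional (cardinality-level) version of the rank identity $\rank(\Pi^{\tilde A}_B(M)) = [\ker_{\ZZ}(A):\ZZ B]\cdot\rank(M)$ is well-taken; the paper simply asserts that everything except (vi) ``follows immediately from Theorem~\ref{thm:binomial quotients},'' but your derivation of this identity from the tensor decomposition~\eqref{eq:ann-max-pres} (which holds for any object of $\Tirr(X,\beta)$, holonomic or not) is exactly what is needed to make (ii) and (iii) rigorous.

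There is, however, a genuine gap in your argument for (vi). You write that since $A(\kappa+Ck)=A\kappa$, ``the Andean and $\qdeg$ conditions governing (regular) holonomicity of every summand coincide.'' But those conditions are conditions on the binomial $D_\barX$-module, and the equivalence between (regular) holonomicity of $D_\barZ/\sHorn(B,\kappa')$ and the Andean/qdeg condition on $A\kappa'$ is precisely the content of item (vi). So the step ``$\sHorn(B,\kappa)$ is (regular) holonomic $\Rightarrow$ $\sHorn(B,\kappa+Ck)$ is (regular) holonomic for every $k$'' is circular as written: you need this implication to conclude that the full direct sum $\bigoplus_k D_\barZ/\sHorn(B,\kappa+Ck)\cdot z^{k/\varkappa}$ is (regular) holonomic, but you cannot yet appeal to the parameter-independence you are trying to establish.

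The paper closes this gap differently: it observes that each summand $D_\barZ/\sHorn(B,\kappa+Ck)\cdot z^{k/\varkappa}$ is \emph{isomorphic as a $D_\barZ$-module} to the first summand $D_\barZ/\sHorn(B,\kappa)$. The point is that conjugation by $z^{k/\varkappa}$ shifts each $\eta_j$ by $k_j/\varkappa_j$, and (after the coordinate change making $K$ diagonal with entries $\varkappa$) one has $B\cdot(k/\varkappa)=CK\cdot(k/\varkappa)=Ck$, so the parameter shift $\kappa\mapsto\kappa+Ck$ is exactly undone by the graded shift $z^{k/\varkappa}$. With this isomorphism in hand, the equivalence for a single summand is the equivalence for the whole direct sum, and your use of Theorem~\ref{thm:binomial quotients}.(1),(3) then finishes (vi) correctly.
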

\begin{proof}
Only~\eqref{item:binomial:Horn-regHol} does not follow immediately
from Theorem~\ref{thm:binomial quotients}. For this, we must also use
the decomposition of $\Pi^{\tilde A}_B(D_\barX/(I(B)+\<E-A\kappa\>))$
from Corollary~\ref{cor:Pi of lattice basis is saturated Horn}.  
Note that each summand in this decomposition is actually isomorphic to
the first summand, $D_\barZ/\sHorn(B,\kappa)$.  
The parameter shifts $\kappa+Ck$ in the remaining summands are a red
herring, thanks to the graded shifts induced by multiplication by
$z^{k/\varkappa}$.  
\end{proof}

\begin{proof}[Proof of Theorem~\ref{thm:binomial quotients}] 

The only if direction of item~\eqref{thm:binomial quotients:Lhol} 
follows from Theorem~\ref{thm:transfer theorem}. For the converse, apply Theorem~\ref{thm:main result for binomial D-mods}.\eqref{item:binomial:Lhol} and Proposition~\ref{prop:characteristic variety Pi}. 

To prove~\eqref{thm:binomial quotients:rank}, since $M$ is holonomic,   
by Theorem~\ref{thm:nilsson binomial enough} 
there is a generic binomial weight vector $w\in\RR^n$ such that 
at a nonsingular point $p$, the basic
Nilsson solutions of $M$ in the direction of $w$ span $\Sol_p(M)$.  
Thus Theorem~\ref{thm:solutions E} implies~\eqref{thm:binomial quotients:rank}.  

The forward implication of~\eqref{thm:binomial quotients:reg hol} is
Theorem~\ref{thm:transfer theorem}.\eqref{item:regular-hol barX}, so
it remains to establish the converse in the binomial setting.  
Suppose that $M$ is not regular holonomic. 
Then by Theorem~\ref{thm:nilsson binomial less}, 
there exists a generic weight vector $w\in\RR^n$ for $M$ such that 
$\dim_\CC\sN_w(M)<\rank(M)$. 
Thus by Theorem~\ref{thm:solutions E} applied to these Nilsson
solutions, we see that, with $\ell = w B$,  
\[
\dim_\CC\sN_\ell(\Pi^{\tilde A}_B(M)) 
= [\ker_{\ZZ}(A):\ZZ B]\cdot\dim_\CC\sN_w(M).
\] 
By slight perturbation of $w$, if necessary, $\ell$ will be a generic
weight vector for $\Pi^{\tilde A}_B(M)$.  
But then by~\eqref{thm:binomial quotients:rank}, we see that 
$\dim_\CC\sN_\ell(\Pi^{\tilde A}_B(M)) < 
[\ker_{\ZZ}(A):\ZZ B]\cdot \rank(M) = \rank(\Pi^{\tilde A}_B(M))$. 
Thus  
Theorem~\ref{thm:solutions-of-regular-holonomic} implies that 
$\Pi^{\tilde A}_B(M)$ cannot be regular holonomic. 
\end{proof}

\begin{remark}
\label{rem:charVar Pi binomial}
Let $I$ be an $A$-graded binomial $\CC[\del_x]$-ideal, and assume that $\beta
\in \CC^d$ lies outside of the Andean arrangement of $I$, so that $M =
D_\barX/( I+\<E-\beta\>)$ is holonomic. 
Theorem~4.3 in~\cite{binomial-slopes} states that $\charVar(M)$ is equal to the 
union of the characteristic varieties of the binomial
$D_\barX$-modules corresponding to associated primes of $I$ whose 
components belong to~\eqref{eqn:important components}.
Since prime binomial ideals are isomorphic to toric ideals, 
it suffices 
to compute the characteristic
varieties of $A$-hypergeometric systems, which is done explicitly by
Schulze and Walther in~\cite{schulze-walther-duke}. 
Therefore, combining~\cite{binomial-slopes,schulze-walther-duke} with
Proposition~\ref{prop:charVar X} yields a description of 
the characteristic variety of $\Delta^{\tilde A}_B(D_X/(
I+\<E-\beta\>))$. If $M$ is regular holonomic, then  
Proposition~\ref{prop:characteristic variety Pi} provides an explicit
description of $\charVar(\Pi^{\tilde A}_B(M))$. 
\endrk
\end{remark}

\begin{example}
\label{ex:Horn vs sHorn charVar}
If $A = [1\ 2\ 3]$ and 
$B = \begin{bmatrix} -2 & \phantom{-}1 \\ \phantom{-}1 & -2 \\
  \phantom{-}0 & \phantom{-}1 \end{bmatrix}$,  
then when $\kappa = [0,0,0]$, 
\begin{align*}
\charVar\left(
	\frac{D_\barX}{ I(B)+\<E-A\kappa\>} 
	\right)
&= \Var(\<\xi_1,\xi_2,\xi_3\>) 
  \cup \Var(\<\xi_1,\xi_2,x_3\>)
\intertext{
by computation in \texttt{Macaulay2}~\cite{M2}. 
Thus by Proposition~\ref{prop:characteristic variety Pi}, }
\charVar\left(
\frac{D_\barZ}{\sHorn(B,\kappa)}
\right) 
&=
\Var(\< z_1\zeta_1,z_2\zeta_2 \>). 
\intertext{On the other hand, \texttt{Macaulay2}~\cite{M2} reveals that}
\charVar\left(
\frac{D_\barZ}{\Horn(B,\kappa)}
\right) 
&= 
\Var(\< z_1\zeta_1,z_2\zeta_2 \>) \cup \Var(\< z_1,4z_2-1\>), 
\end{align*}
so the component $\Var(\< z_1,4z_2-1\>)$ of
$\charVar(D_\barZ/\Horn(B,\kappa))$ cannot be obtained from those of 
$D_\barX/( I(B)+\<E-A\kappa\>)$.  
In particular, the holonomicity of $D_\barX/( I(B)+\<E-A\kappa\>)$
does not by itself guarantee the holonomicity of
$D_\barZ/\Horn(B,\kappa)$. However, we show in \cite[Theorem~3.5]{bmw-Horn} that this is the case under strong conditions on
$\beta$.   
In this example, this assumption is equivalent to lying outside of the
Andean arrangement of $A$.  
\endrk
\end{example}

\begin{example}
\label{ex:preliminaryCounterexample}
Consider the matrices 
\[
B = \left[ \begin{array}{rrr}
1  &  1 & 2 \\
-1 & -1 & 0 \\
0  &  0 & -1 \\
1  & 0  & 0 \\
0  & 1  & 0 \\
-1 & 0  &0 \\
0  &-1 & 0 
\end{array} \right]  , \;\;
B' = \left[ \begin{array}{rrr}
1  &  1 & -2 \\
-1 & -1 & 0 \\
0  &  0 & 1 \\
1  & 0  & 0 \\
0  & 1  & 0 \\
-1 & 0  &0 \\
0  &-1 & 0 
\end{array} \right]  , \;\;
\text{and}\;\; \kappa= \begin{bmatrix} 2 \\ 0 \\ 0 \\ 0 \\ 0 \\ 0 \\
  0 \end{bmatrix},
\]
noting that $B'$ is obtained by $B$ by changing the sign of the last column; in particular, they correspond to the same torus action. 
However, they do not yield Horn hypergeometric systems with the same behavior: $D_\barZ/\Horn(B',\kappa)$ is holonomic, while $D_\barZ/\Horn(B,\kappa)$ is not, since its characteristic variety has a component associated to the ideal $\<z_3,z_1\zeta_1+z_2\zeta_2\>$. 
\endrk
\end{example}

We conclude this section by addressing the irreducibility of 
monodromy representation of Horn $D_\barZ$-modules. 
Note first that Theorem~\ref{thm:transfer theorem} and
Theorem~\ref{thm:binomial irred} together provide a test for the
reducibility of monodromy representation for the image of a binomial
$D_\barX$-module under $\Pi^{\tilde A}_B$.  

\begin{corollary}
\label{cor:monodromy horn}
If $\beta = A\kappa$ does not lie in the Andean arrangement of $I(B)$, 
then the Horn $D_\barZ$-modules $D_\barZ/\Horn(B,\kappa)$ and $D_\barZ/\sHorn(B,\kappa)$ 
have irreducible monodromy representation if and only if  
(1) and (2) in Theorem~\ref{thm:binomial irred} hold for $I = I(B)$. 
\end{corollary}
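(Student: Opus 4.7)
The plan is to combine Corollary~\ref{cor:Pi of lattice basis is saturated Horn} with Theorem~\ref{thm:transfer theorem}.(3) and Theorem~\ref{thm:binomial irred}, after reducing all three Horn modules to the saturated one.

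I first reduce to $\sHorn$. The generators of $\Horn(B,\kappa)$ and $\nHorn(B,\kappa)$ differ by the unit $z_k^{-1}$ of $D_Z$, and $\sHorn(B,\kappa) = D_Z\cdot\Horn(B,\kappa)\cap D_\barZ$ by definition, so all three Horn ideals generate the same left ideal in $D_Z$. Since monodromy irreducibility is tested by applying $\CC(z)\otimes_{\CC[z]}(-)$, which factors through $D_Z$, the three Horn modules share the same irreducibility status, reducing the problem to $D_\barZ/\sHorn(B,\kappa)$.

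Next, Corollary~\ref{cor:Pi of lattice basis is saturated Horn} yields
\[
\Pi^{\tilde A}_B\!\left(\frac{D_\barX}{I(B)+\<E-\beta\>}\right) \,\cong\, \bigoplus_{0\leq k<\varkappa} \frac{D_\barZ}{\sHorn(B,\kappa+Ck)}\cdot z^{k/\varkappa},
\]
with $A(\kappa+Ck) = \beta$ since $AC=0$, so conditions (1) and (2) of Theorem~\ref{thm:binomial irred} for $I=I(B)$ are independent of $k$. When the columns of $B$ span $\ker_\ZZ(A)$ as a lattice, the direct sum has a single summand equal to $D_\barZ/\sHorn(B,\kappa)$, and Theorem~\ref{thm:transfer theorem}.(3) combined with Theorem~\ref{thm:binomial irred} supplies both implications of the equivalence, finishing this case.

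The main obstacle is the general case $[\ker_\ZZ(A):\ZZ B] > 1$, where the image decomposes into a nontrivial direct sum and is automatically reducible as a $D_\barZ$-module, so the converse in Theorem~\ref{thm:transfer theorem}.(3) cannot be invoked directly. I plan to resolve this by factoring $\Pi^{\tilde A}_B = j_+\circ\Upsilon_K^{K^{-1}}\circ\Delta^{\tilde A}_C\circ i^*$ and handling the étale cover $\mu_K$ separately: for $\Delta^{\tilde A}_C$ the columns of $C$ always span $\ker_\ZZ(A)$ as a lattice, so Theorem~\ref{thm:invariants-are-good}.(3) equates irreducibility of the binomial $D_\barX$-module with that of $\Delta^{\tilde A}_C(i^*M)$ on $X/\sT$. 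The pushforward $\Upsilon_K^{K^{-1}}$ along the étale cover $\mu_K$ of degree $|\det K|$ turns an irreducible module on $X/\sT$ into a direct sum of Galois conjugates on $Z$, each irreducible; conversely, irreducibility of any one summand forces all summands (and hence the pre-pushforward module on $X/\sT$) to be irreducible. This descent via $\mu_K$ is the principal technical step.
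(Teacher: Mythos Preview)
Your reduction of all three Horn modules to $D_\barZ/\sHorn(B,\kappa)$ via $\CC(z)\otimes_{\CC[z]}(-)$ is correct and is exactly the paper's first step. Your treatment of the saturated case $[\ker_\ZZ(A):\ZZ B]=1$ via the two-sided Theorem~\ref{thm:transfer theorem}.\eqref{item:irred-monodromy barX} is also fine.

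The gap is in your handling of $|\det K|>1$. You assert that irreducibility of a single summand $N_0\cong D_Z/\sHorn(B,\kappa)$ over $D_Z$ forces irreducibility of the pre-pushforward $N=\Delta^{\tilde A}_C(i^*M)$ over $D_{X/\sT}$. At the generic point this reads $N(y)\cong N_0(z)\otimes_{\CC(z)}\CC(X/\sT)$, so you are claiming that base change along the finite extension $\CC(X/\sT)/\CC(z)$ preserves irreducibility of differential modules. That is false in general: an irreducible connection can become reducible after pullback along a finite \'etale cover (equivalently, restriction of an irreducible monodromy representation to a finite-index subgroup need not stay irreducible). Only the reverse implication---$N(y)$ irreducible $\Rightarrow$ $N_0(z)$ irreducible---holds automatically. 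Thus your argument delivers ``(1),(2) $\Rightarrow$ $\sHorn$ irreducible'' but leaves the converse unproved. Labeling the summands ``Galois conjugates'' does not help: the $N_k$ are the isotypic pieces for the deck-group action, and their individual irreducibility says nothing about whether $N(y)$ admits a submodule that is \emph{not} $G$-stable.

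The paper does not attempt this descent. Instead of factoring through $X/\sT$ and analyzing $\mu_K$, it invokes Remark~\ref{rem:solutions Horn}, which combines the direct-sum decomposition of Corollary~\ref{cor:Pi of lattice basis is saturated Horn} with the solution-space analysis of Theorem~\ref{thm:solutions E} to produce a direct isomorphism
\[
\Sol_p\bigl(D_\barX/(I(B)+\<E-\beta\>)\bigr)\;\cong\;\Sol_{p^B}\bigl(D_\barZ/\sHorn(B,\kappa)\bigr)
\]
for \emph{any} Gale dual $B$, matching the binomial module with a \emph{single} saturated Horn module rather than with the whole direct sum $\Pi^{\tilde A}_B(M)$. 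The monodromy comparison is then read off from this identification together with Theorem~\ref{thm:binomial irred}, sidestepping your \'etale-cover step.
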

\begin{proof}
Note that the three incarnations of Horn systems are isomorphic after
tensoring with $\CC(z)$, so it is enough to consider
$D_\barZ/\sHorn(B,\kappa)$.  
The result thus follows from combining Theorem~\ref{thm:binomial
  irred} with Corollary~\ref{cor:Pi of lattice basis is saturated
  Horn} and Remark~\ref{rem:solutions Horn}. 
\end{proof}

\section{A sufficient condition for holonomicity of \texorpdfstring{$\Horn(B,\kappa)$}{Horn(B,kappa)}}
\label{sec:hornHolonomicity}

In this section, we provide a sufficient condition for the holonomicity of
$\Horn(B,\kappa)$. If $\Horn(B,\kappa)$ is holonomic, 
then $\sHorn(B,\kappa)$ is holonomic. However, the converse is not true
by Example~\ref{ex:preliminaryCounterexample}; one needs a stronger
assumption on $\sHorn(B,\kappa)$.

Example~\ref{ex:preliminaryCounterexample}
involves three variables. This is not a coincidence. The main result in this section,
Theorem~\ref{thm:hornHolo}, implies
Corollary~\ref{coro:bivariateHolonomic}, which states that when $B$
has fewer than three columns,
$\Horn(B,\kappa)$ is holonomic if and only if $\sHorn(B,\kappa)$ is
holonomic.

\begin{definition}
If $\varnothing\neq\gamma\subseteq [m]=\{1,\dots,m\}$, denote by $B[\gamma]$ the matrix
whose columns are the columns of $B$ indexed by $\gamma$.
A parameter $\kappa \in \CC^n$ is \emph{toral for $B$} if
$\sHorn(B,\kappa)$ is holonomic, and it is \emph{completely
  toral for $B$} if $\sHorn(B[\gamma],\kappa)$ is holonomic for every
$\varnothing \neq \gamma \subseteq [m]$.
\endrk
\end{definition}

\begin{lemma}
The set of (completely) toral parameters of $B$ is Zariski
open in $\CC^n$. 
\end{lemma}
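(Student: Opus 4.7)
The plan is to exhibit the set of completely toral parameters as the complement, inside $\CC^d$, of a finite union of Zariski closed subsets. Unwinding Definition~\ref{def:completelyToral}, a parameter $\beta$ fails to be completely toral precisely when there exists some $\gamma \subseteq \{1,\dots,m\}$ (including $\gamma = \{1,\dots,m\}$, which captures the condition ``$\beta$ toral for $B$'') such that $\beta$ lies in the projection $\pi_\gamma$ of the Andean arrangement of $I(B_\gamma)$ onto the first $d$ coordinates of the grading group for $A[\gamma]$. Since the collection of subsets $\gamma$ is finite, it suffices to show that each such projected set is Zariski closed in $\CC^d$.

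For this, I would invoke the structural description recorded in the paragraph immediately following Theorem~\ref{thm:main result for binomial D-mods}: the Andean arrangement of any $A'$-graded binomial ideal is a finite union of integer translates of $\CC$-linear subspaces of the form $\CC A'_\sigma$ inside the grading group. Applied to the $A[\gamma]$-graded ideal $I(B_\gamma)$, this tells us that the Andean arrangement of $I(B_\gamma)$ is a finite union of affine-linear subspaces of $\CC^{n-|\gamma|}$, where $n - |\gamma|$ is the number of rows of $A[\gamma]$.

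The final step is simply the observation that the projection $\pi_\gamma \colon \CC^{n-|\gamma|} \twoheadrightarrow \CC^d$ onto the coordinates indexed by the rows of $A$ is a $\CC$-linear surjection, and the image of an affine-linear subspace under a linear map is again an affine-linear subspace, hence Zariski closed. Thus $\pi_\gamma$ sends the Andean arrangement of $I(B_\gamma)$ to a finite union of affine-linear subspaces in $\CC^d$, and the intersection over the finitely many $\gamma$ of the open complements of these closed sets is open, as desired.

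There is no real obstacle in this argument; the one point that would ordinarily require caution is that images of Zariski closed sets under projections need not be closed, but here the structural result cited above guarantees we are only projecting finite unions of affine-linear subspaces, for which closedness of the image is automatic.
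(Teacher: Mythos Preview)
Your argument is correct and is essentially a fleshed-out version of the paper's one-line proof, which simply reads ``This follows from the fact that Andean arrangements are unions of affine spaces.'' You have made explicit the only point that needs care --- that projections of closed sets need not be closed in general, but projections of affine subspaces are --- which the paper leaves implicit.
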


\begin{proof}
It is enough to show this for the toral parameters of $B$. For $A$ as in Convention~\ref{conv:A}, the set $\{\beta = A\kappa \mid
\sHorn(B,\kappa) \text{ is not holonomic}\} \subseteq \CC^d$ is Zariski closed by
Theorem~\ref{thm:main result for binomial D-mods}.\eqref{item:binomial:hol:arr} and
Corollary~\ref{cor:binomial
  quotients}.\emph{\eqref{item:binomial:Horn-regHol}}. Thus 
the set of nontoral parameters for $B$ is closed in $\CC^n$. 
\end{proof}

\begin{theorem}
\label{thm:hornHolo}
If $\kappa \in \CC^n$ is completely toral for $B$, then
$\Horn(B,\kappa)$ is holonomic. If there exists a completely toral
parameter for $B$, then $\Horn(B,\kappa)$ is holonomic for generic parameters.
\end{theorem}

\begin{proof}
Let $\kappa$ be a completely toral parameter for $B$. By Lemma~\ref{lem:TcrossZ-fg},
the completely toral parameters are then Zariski-dense, which implies
the last claim.
Let $C\defeq \charVar\left(D_\barZ/\Horn(B,\kappa)\right)$, and
consider the stratification of $\barZ$ by the multiplicity of
the function $z_1\cdots z_m$. The (relatively) open strata are in
bijection with the subsets $\gamma$ of $[m]$, given by
$Z_\gamma\defeq (\CC^*)^\gamma\times\{0\}^{[m]\minus \gamma}$,
the set of points in $\barZ = \CC^m$ whose entries indexed by $\gamma$
are nonzero, and whose entries indexed by $[m]\minus\gamma$ are
zero. There is a corresponding stratification of $T^*\barZ$ whose open
strata are $T^*_\gamma\defeq \left((\CC^*)^\gamma\times\{0\}^{[m]\minus
  \gamma}\right)\times \CC^m$.

It suffices to show that the intersection of $C$ with $T_\gamma$ is
$m$-dimensional for each $\gamma$. This is obvious for
$\gamma=\varnothing$, and the case $\gamma=[m]$ follows because
$\Horn(B,\kappa)$ and $\sHorn(B,\kappa)$ agree on the open subset
$Z=Z_{[m]}$. Let $\gamma\subseteq [m]$ be any other choice.
Let $D_\gamma$ be the Weyl algebra on $\CC^\gamma$, and let $P=\sum_{
  k\in \gamma} P_k(q_k-z_kp_k)$ be an element of $\Horn(B,\kappa)$
where all $P_k\in D_\gamma$. The symbol of such operator is one of the
equations that cut out $C$. When restricted to $T_\gamma$, it becomes
the (pullback under the projection $T^*_\gamma\to T^*\CC^\gamma$ of the) symbol of
$P_\gamma=\sum_{ k\in \gamma}
P_k(q^\gamma_k-z_kp^\gamma_k)\in\Horn(B[\gamma],\kappa)$ where
$q_k^\gamma,p_k^\gamma$ are computed from $B[\gamma]$ rather than from
$B$. Running over (the symbols of) all possible such $P$ corresponds
to running over (the symbols of) all elements of
$\Horn(B[\gamma],\kappa)$. Since $\Horn(B[\gamma],\kappa)$ and
$\sHorn(B[\gamma],\kappa)$ agree on $(\CC^*)^\gamma$, and since
$\sHorn(B[\gamma],\kappa)$ is holonomic, the characteristic variety
$C$ meets $T_\gamma$ in a set of dimension at most $m$, which is the sum of
$|\gamma|$ (the dimension of the characteristic variety of
$\sHorn(B[\gamma],\kappa)$) and $m-|\gamma|$ (the dimension of the
fiber of the projection $T^*_\gamma\to T^*\CC^\gamma$). 
\end{proof}

When $B$ has only two rows, there is no difference between toral and
completely toral parameters. This fact provides a holonomicity characterization for
bivariate Horn $D_\barZ$-modules.

\begin{corollary}
\label{coro:bivariateHolonomic}
If $B \in \ZZ^{n\times 2}$, then the following are equivalent:
\vspace*{-2mm}
\begin{enumerate}
\item The system $D_\barZ/\Horn(B,\kappa)$ is holonomic.
\item The parameter $A \kappa$ is toral for $B$. 
\item The system $D_\barX/(I(B)+\<E-A\kappa\>)$ is holonomic.
\end{enumerate}
\end{corollary}

\section{Torus invariants and the Horn--Kapranov uniformization}
\label{sec:binomial Sing}

In this section, we consider the singular locus of the image of an
$A$-hypergeometric system under $\Pi^{\tilde A}_B$, still following
Convention~\ref{conv:BtildeACK}.  
We show that our framework for constructing $\Pi^{\tilde A}_B$ 
can be thought of as a generalization of 
Kapranov's ideas in~\cite{horn-kapranov}.  

In this direction, we restrict our attention to the case  
that the $\QQ$-rowspan of $A$ contains $\boldone_n$, so that the $A$-hypergeometric system 
$D_\barX/( I_A+\<E-\beta\>) = D_\barX/H_A(\beta)$ is regular holonomic
for all $\beta$~\cite{hotta,schulze-walther-duke}. 
This allows us to apply Proposition~\ref{prop:characteristic variety Pi},
which provides a precise description of the singular locus  
$\Sing{\Pi^{\tilde A}_B(D_\barX/H_A(\beta))}$. 
Namely, it is the union of  
$\Sing{\Delta_B^{\tilde A}(D_\barX/H_A(\beta))}$ with 
the coordinate hyperplanes in $\barZ$. 
The former is a geometric quotient by the torus $T$ of $\Sing{D_\barX/H_A(\beta)}$. 
We note that in~\cite{HT-sing lauricella}, Gr\"obner bases and
$D$-module theory are used to determine the singular locus of the
Lauricella $F_C$ system, as well as that of its associated binomial
$D$-module.   

The \emph{secondary polytope of $A$} is the Newton polytope of the
principal $A$-determinant, which is the defining equation for the codimension one part of 
$\Sing{D_\barX/H_A(\beta)}$. Since the principal $A$-determinant is
$A$-graded, its Newton polytope lies in an $m$-dimensional subspace of
$\CC^n$. Thus, as a direct consequence of
Proposition~\ref{prop:characteristic variety Pi}, we have the following result. 

\begin{corollary}
\label{cor:sing Pi binomial}
The Newton polytope of the defining polynomial of the codimension one part of 
$\Sing{\Pi^{\tilde A}_B(D_\barX/H_A(\beta))}$ 
is the secondary polytope of $A$ 
viewed in $\RR^m$. 
\qed
\end{corollary}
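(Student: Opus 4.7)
The plan is to combine the singular locus formula of Proposition~\ref{prop:characteristic variety Pi} with the classical Gelfand--Kapranov--Zelevinsky theory of the principal $A$-determinant.

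First, under the hypothesis that $\boldone_n$ lies in the $\QQ$-rowspan of $A$, the $A$-hypergeometric module $M := D_\barX/H_A(\beta)$ is regular holonomic for every $\beta$. Applying Proposition~\ref{prop:characteristic variety Pi} to $M$ yields
\[
\Sing{\Pi^{\tilde A}_B(D_\barX/H_A(\beta))} \ = \ \Sing{\Delta^{\tilde A}_B(D_X \otimes_{D_\barX} M)} \, \cup \, \Var(z_1 z_2 \cdots z_m),
\]
and the first piece is, by Proposition~\ref{prop:charVar X} (after passing to a finite cover when necessary, so that the columns of $B$ span $\ker_\ZZ(A)$ as a lattice), a geometric quotient by $T$ of $\Sing{D_X \otimes_{D_\barX} M}$. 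In particular, the entire codimension-one part of $\Sing{\Pi^{\tilde A}_B(D_\barX/H_A(\beta))}$ is covered by these two explicit pieces.

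Next, I would invoke the foundational theorem of Gelfand, Kapranov, and Zelevinsky identifying the codimension-one part of $\Sing{D_\barX/H_A(\beta)}$ as the zero locus of the principal $A$-determinant $E_A \in \CC[x_1, \ldots, x_n]$, and asserting that the Newton polytope of $E_A$ is the secondary polytope of $A$. Since $E_A$ is $A$-homogeneous, its Newton polytope lies in a translate of $\ker_\RR(A)$; the Gale dual $B$ identifies this translate with $\RR^m$, and this is how the secondary polytope is \emph{viewed in $\RR^m$}. The transformation $x \mapsto x^B = z$ that underlies $\Pi^{\tilde A}_B$ absorbs precisely this homogeneity: $E_A(x)$ equals $x^{u_0}\hat E_A(z)$ for some monomial $x^{u_0}$, and the Newton polytope of $\hat E_A$ in $\RR^m$ is exactly the image of $\mathrm{Newton}(E_A)$ under the identification above.

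The remaining step is to verify that, up to that monomial shift, $\hat E_A(z)$ is the defining polynomial of the codimension-one part of $\Sing{\Pi^{\tilde A}_B(D_\barX/H_A(\beta))}$. On the open torus $Z$, the geometric quotient description shows that the relevant defining polynomial is the $z$-transform of the full $A$-discriminant $\Delta_A$; along the boundary of $\barZ$, the coordinate hyperplane divisors $\Var(z_i)$ from Proposition~\ref{prop:characteristic variety Pi} match the face discriminants $\Delta_{A_\Gamma}^{m(\Gamma)}$ appearing in the GKZ factorization $E_A = \Delta_A \cdot \prod_{\Gamma \prec \conv(A)} \Delta_{A_\Gamma}^{m(\Gamma)}$. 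The principal obstacle is the bookkeeping of multiplicities: one must confirm that each coordinate hyperplane $\Var(z_i)$ enters with exactly the exponent dictated by the product of the $m(\Gamma)$ over those proper faces $\Gamma$ missing the corresponding vertex, so that the Newton polytope of the resulting defining polynomial recovers the full secondary polytope rather than a proper subpolytope. Once this matching is established, the $A$-homogeneity argument above immediately delivers the claimed equality of Newton polytopes in $\RR^m$.
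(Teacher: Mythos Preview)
Your approach matches the paper's: both invoke Proposition~\ref{prop:characteristic variety Pi} together with the GKZ identification of the secondary polytope as the Newton polytope of the principal $A$-determinant $E_A$, and both exploit the $A$-gradedness of $E_A$ to pass to $z$-coordinates. The paper regards the corollary as immediate---it carries a bare \qed\ after a one-sentence justification---and does not engage with the multiplicity question you raise.

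That multiplicity bookkeeping, however, is unnecessary, and your proposed matching of the hyperplanes $\Var(z_i)$ with face discriminants is off-target. The coordinate hyperplanes in $\barZ$ arise from $j_+$ in Proposition~\ref{prop:characteristic variety Pi}, not from any factor of $E_A$; there is no reason to expect them to align with the $\Delta_{A_\Gamma}$. The observation that dissolves your concern is simply that multiplying or dividing by a monomial only \emph{translates} a Newton polytope---it never changes its shape. Since the phrase ``viewed in $\RR^m$'' already involves an arbitrary choice of origin when identifying the affine span of $\mathrm{Newton}(E_A)\subseteq \RR^n$ with $\RR^m$ via $B$, such translation is harmless: whatever exponents the $z_i$ carry in the defining polynomial, its Newton polytope remains the secondary polytope of $A$ in $\RR^m$. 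Your worry about recovering only a proper subpolytope would materialize only if one insisted on the \emph{reduced} defining polynomial and $E_A$ had repeated irreducible factors; but the paper, by calling $E_A$ itself ``the defining equation'' of the codimension-one singular locus upstairs, is taking the defining polynomial in the non-reduced sense induced directly from $E_A$, for which the conclusion is tautological.
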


By following essentially the same argument as the 
one given in the proof in~\cite[Theorem~2.1.a]{horn-kapranov}, we recover
the Horn--Kapranov uniformization of the $A$-discriminant from our
setup for the construction of $\Pi^{\tilde A}_B$.   
Let $[n] \defeq \{1,\dots,n\}$, and 
let $C_{[n]}$ be the closure of the conormal space 
to the $T$-orbit of the point $\boldone_n$. 
Then $C_{[n]}$ has dimension $n$ and has defining ideal equal to the radical of 
$\<\gr^F(E), \gr^F(I_A) \>$.

Since we have assumed that the rational row span of $A$ contains the vector
$\boldone_n$, the $A$-discriminant is the defining polynomial of the
projection onto the $x$-coordinates of $C_{[n]} \minus \Var(\xi_1,\dots,\xi_n)$, 
as long as this projection is a hypersurface. 
Otherwise, the $A$-discriminant is defined to be the polynomial $1$. 
In the hypersurface case, the \emph{reduced $A$-discriminant} is the 
polynomial $\nabla_A$ given by saturating $x_1\cdots x_n$ out of the
principal $A$-determinant, and thus cuts out the closure of the intersection of the
$A$-discriminantal hypersurface with $(\CC^*)^n$. 
Note that the $A$-discriminant is $A$-graded, and thus $\nabla_A$ is
invariant with respect to the torus action.

Since the algebraic counterpart of projection is elimination, we see
that $\nabla_A$, considered as a polynomial in $2n$ variables,
vanishes on  
$C_{[n]} \minus \Var(\<x_1\cdots x_n\> \cap \<\xi_1,\dots,\xi_n\> )$, 
and in particular on $C_{[n]} \minus \Var(x_1\cdots x_n \cdot \xi_1\cdots\xi_n)$. 
Write $\nabla_A$ as a finite sum $\sum_{w\in
  \ZZ^m} \lambda_w x^{Bw}$. If $(\bar{x}, \bar{\xi})$ belongs to $C_{[n]} \minus
\Var(x_1\cdots x_n \cdot \xi_1\cdots\xi_n)$, then 
\[
0= \nabla_A(\bar{x},\bar{\xi}) =  \sum_{w\in \ZZ^m} \lambda_w \bar{x}^{Bw} = 
\sum_{w\in \ZZ^m} \lambda_w \bar{x}^{Bw} \bar{\xi}^{Bw} = 
\sum_{w \in \ZZ^m} \lambda_w (\bar{x}\bar{\xi})^{Bw},
\]
as $\xi^{Bw} = 1$ on $C_{[n]} \minus \Var(x_1\cdots x_n \cdot
\xi_1\cdots\xi_n)$. Dehomogenizing, we obtain  
\[
\dkap\left(\sum_{w \in \ZZ^m} \lambda_w (x\xi)^{Bw}\right) = 
\sum_{w \in \ZZ^m} \lambda_w (Bz\zeta)^{Bw} = \sum_{w\in \ZZ^m}
\lambda_w((Bz\zeta)^B)^w, 
\] 
where, for $x \in X$ and $b_1,\dots,b_m$ the columns of $B$,
we write $x^B \defeq (x^{b_1},\dots,x^{b_m})$, as in \S\ref{sec:E-beta}.

Replacing $z\zeta$ by $(s_1,\dots,s_m)$, it is now easy to see that
the dehomogenized reduced $A$-discri\-mi\-nant is the defining
equation for the variety with the desired parametrization:  
\[
\left( \CC^m \minus \bigcup_{i=1}^n \Var((Bs)_i) \right)\ni s \mapsto (Bs)^B, 
\]
known as the Horn--Kapranov uniformization.

\raggedbottom
\def\cprime{$'$} \def\cprime{$'$}
\providecommand{\href}[2]{#2}
\end{document}